\documentclass[11pt]{amsart}

\usepackage{amssymb}
\usepackage{epsfig}
\usepackage{amsmath}
\usepackage{color}
\usepackage{mathrsfs}
 \usepackage{caption}
\usepackage{delarray}
\usepackage[all]{xy}
\usepackage{mathtools}
\usepackage{graphics}
\usepackage{graphicx}
\usepackage{xkeyval}
\usepackage{color}
\usepackage{keyval}
\usepackage{pict2e}
\usepackage{ifthen}
\usepackage{amsmath,amssymb}
\usepackage{amsmath,amscd}
\usepackage{amstext}
\usepackage{amsxtra}
\usepackage{amsthm}
\usepackage{latexsym}
\usepackage{verbatim}
\usepackage{pstricks}
\usepackage{tikz}
\usetikzlibrary{shapes,snakes}
\usepackage{multicol}
\usepackage{mathrsfs}
\usepackage[all]{xy}
 \marginparwidth 5cm
\usepackage[pdftex]{hyperref}
\usepackage{enumerate}
\usepackage{enumitem}
\hfuzz5pt 
\vfuzz5pt 
\DeclareMathAlphabet{\mathcalligra}{T1}{calligra}{m}{n}
\numberwithin{equation}{section}
\theoremstyle{plain}
\newtheorem{theorem}{Theorem}[section]

\newtheorem{lemma}[theorem]{Lemma}
\newtheorem{proposition}[theorem]{Proposition}

\theoremstyle{definition}

\newtheorem{definition}[theorem]{Definition}
\newtheorem*{condition-S}{Condition}

\theoremstyle{remark}
\newtheorem{remark}[theorem]{Remark}

\pagestyle{plain}

\newcommand{\C}{\mathbb{C}}
\newcommand{\EL}{\mathcal{L}}

\newcommand{\ep}{\varepsilon}

\newcommand{\F}{\mathcal{F}}

\objectmargin{1,5ex}



\begin{document}

\title{Moduli Spaces of Germs of Semiquasihomogeneous Legendrian Curves}
\author{Marco Silva Mendes}
\author{Orlando Neto}
\date{2018}

\maketitle

\pagenumbering{arabic}



\begin{abstract}
We construct  a moduli space for Legendrian curves singularities which are contactomorphic-equivalent and equisingular through a contact analogue of the Kodaira-Spencer map for curve singularities. We focus on the specific case of Legendrian curves which are the conormal of a plane curve with one Puiseux pair.

\end{abstract}

\section{Introduction}\label{Intro}
Greuel, Laudal, Pfister et all (see  \cite{KS}, \cite{LaP}) constructed moduli spaces of germs of plane curves equisingular to a plane curve $\{y^k+x^n=0\}$, $(k,n)=1$. Their main tools are the Kodaira Spencer map of the equisingular semiuniversal deformation of the curve and the results of \cite{GP1}. We extend their results to Legendrian curves.

Let $Y$ be the germ of a plane curve that is a generic plane projection of a Legendrian curve $L$. The equisingularity type of $Y$ does not depend on the projection (see \cite{Neto}). Two Legendrian curves are equisingular if their generic plane projections are equisingular. We say that an irreducible Legendrian curve $L$ is semiquasihomogeneous if its generic plane projection is equisingular to a quasihomogeneous plane curve $\{y^k+n^n=0\}$, for some $k,n$ such that $(k,n)=1$. Hence the generic plane projection of $L$ is a semiquasihomogeneous plane curve.

In section \ref{RCG} we recall the main results of relative contact geometry. In section \ref{MKSM} we construct the microlocal Kodaira Spencer map and study its kernel $\EL_B$, a Lie algebra of vector fields over the base space $\C^B$ of the semiuniversal equisingular deformation of the plane curve $\{y^k+n^n=0\}$. We use $\EL_B$ in order to construct a Lie algebra of vector fields $\EL_C$ over the base space $\C^C$ of the microlocal semiuniversal equisingular deformation of $\{y^k+n^n=0\}$. In section \ref{GQ} we recall some results of \cite{GP1}. In section \ref{FS} we study the stratification of $\C^C$ induced by $\EL_C$ and show that the conormals of two fibers $\F_b$, $\F_c$ of the microlocal semiuniversal equisingular deformation of $\{y^k+n^n=0\}$ are isomorphic if and only if $b$ and $c$ are in the same integral manifold of $\EL_C$. Moreover, we construct the moduli spaces. The final section in dedicated to presenting an example.



\section{Relative contact geometry}\label{RCG}

Let $q:X\to S$ be a morphism of complex spaces.
We can associate to $q$ a coherent $\mathcal O_X$-module 
$\Omega^1_{X/S}$, the \emph{sheaf of relative differential forms of $X\to S$},  and a differential morphism $d:\mathcal O_X\to \Omega^1_{X/S}$ (see \cite{HA} or \cite{EDLC}).

If $\Omega^1_{X/S}$ is a locally free $\mathcal O_X$-module, we denote by $\pi=\pi_{X/S}:T^*(X/S)\to X$ the vector bundle with sheaf of sections $\Omega^1_{X/S}$. We say that $T(X/S)$ [$T^*(X/S)$] is the \em relative tangent bundle \em [\em cotangent bundle\/\em ] of $X\to S$. 

Let $\varphi:X_1\to X_2$, $q_i:X_i\to S$ be morphisms of complex spaces such that $q_2\varphi=q_1$. There is a morphism of $\mathcal O_{X_1}$-modules
\begin{equation}\label{ROPHI1-3}
\widehat\rho_\varphi:\varphi^*\Omega^1_{X_2/S}
=\mathcal O_{X_1}\otimes _{\varphi^{-1}\mathcal O_{X_2}}
\varphi^{-1}\Omega^1_{X_2/S} \to
\Omega^1_{X_1/S}.
\end{equation}
If $\Omega^1_{X_i/S}$, $i=1,2$, and the kernel and cokernel of (\ref{ROPHI1-3}) are locally free, we have a morphism of vector bundles
\begin{equation}\label{ROPHI2-3}
\rho_\varphi: X_1\times_{X_2}T^*(X_2/S) \to T^*(X_1/S).
\end{equation}

If $\varphi$ is an inclusion map, we say that the kernel of (\ref{ROPHI2-3}), and its projectivization, are the \em conormal bundle of $X_1$ relative to $S$. \em
We will denote by $T^*_{X_1}(X_2/S)$ or $\mathbb P^*_{X_1}(X_2/S)$ the conormal bundle of $X_1$ relative to $S$.

Assume $M$ is a manifold.
When $q$ is the projection $M\times S\to S$ we will replace "$M\times S/S$" by $"M|S"$. Let $r$ be the projection $M\times S \to M$.
Notice that $\Omega^1_{M|S}\xrightarrow{\sim}\mathcal O_{M\times S}
\otimes_{r^{-1}\mathcal O_M}r^{-1}\Omega^1_M$
is a locally free $\mathcal O_{M\times S}$-module.
Moreover, $T^*(M|S)=T^*M\times S$.

We say that $\Omega^1_{M|S}$ is the \em sheaf of relative differential forms of $M$ over $S$. \em
We say that $T^*(M|S)$ is the \em relative cotangent  bundle of $M$ over $S$. \em

Let $N$ be a complex manifold of dimension $2n-1$. 
Let $S$ be a complex space.
We say that a section $\omega$ of $\Omega^1_{N|S}$ is a \em relative contact form of \em $N$ over $S$ if  $\omega\wedge d\omega^{n-1}$ is a local generator of 
$\Omega^{2n-1}_{N|S}$.
Let $\mathfrak C$ be a locally free subsheaf of $\Omega^1_{N|S}$. 
We say that $\mathfrak C$ is a \em structure of relative contact manifold on \em $N$ over $S$ if $\mathfrak C$ is locally generated by a relative contact form of $N$ over $S$.  We say that $(N\times S,\mathfrak C)$ is a \em relative contact manifold over $S$. \em
When $S$ is a point we obtain the usual notion of contact manifold.

Let $(N_1\times S,\mathfrak C_1)$, $(N_2\times S,\mathfrak C_2)$ be relative contact manifolds over $S$. 
Let $\chi$ be a morphism from $N_1\times S$ into $N_2\times S$ such that 
$q_{N_2}\circ \chi =q_{N_1}$. 
We say that $\chi$ is a \em relative contact transformation \em of $(N_1\times S,\mathfrak C_1)$ into $(N_2\times S,\mathfrak C_2)$ if the pull-back by $\chi$ of each local generator of $\mathfrak C_2$ is a local generator of $\mathfrak C_1$.

We say that the projectivization $\pi_{X/S}:\mathbb P^*(X/S)\to X$ of the vector bundle 
$T^*(X/S)$ is the \em projective cotangent bundle \em of $X \to S$.

Let $(x_1,...,x_n)$ be a partial system of local coordinates on an open set $U$ of $X$. Let $(x_1,...,x_n,\xi_1,...,\xi_n)$ be the associated partial system of symplectic coordinates of $T^*(X/S)$ on $V=\pi^{-1}(U)$.
Set $p_{i,j}=\xi_i\xi_j^{-1}$, $i\not=j$,
$$
V_i=\{ (x,\xi)\in ~V: ~\xi_i\not =0  \}, 
\qquad \omega_i=\xi_i^{-1}\theta,
\qquad i=1,...,n.
$$
each $\omega_i$ defines a relative contact form $dx_j-\sum_{i\not=j}p_{i,j}dx_i$ on $\mathbb P^*(X/S)$, endowing  $\mathbb P^*(X/S)$ with a structure of relative contact manifold over $S$.

Let $\omega$ be a germ at $(x,o)$ of a relative contact form of $\mathfrak C$.
A lifting $\widetilde\omega$ of $\omega$ defines a germ 
$\widetilde{\mathfrak C}$ of a relative contact structure of 
$N\times T_oS \to T_oS$. Moreover, $\widetilde{\mathfrak C}$ is a lifting of 
the germ at $o$ of ${\mathfrak C}$.

Let $(N\times S,\mathfrak C)$ be a relative contact manifold over a complex manifold $S$.  
Assume $N$ has dimension $2n-1$ and $S$ has dimension $\ell$.
Let $\mathcal L$ be a reduced analytic set of $N\times S$ of pure dimension $n+\ell-1$.
We say that $\mathcal L$ is a \em relative Legendrian variety \em of $N\times S$ over $S$ if for each section 
$\omega$ of $\mathfrak C$, $\omega$ vanishes on the regular part of $\mathcal L$. When $S$ is a point, we say that $\mathcal L$ is a \em  Legendrian variety \em of $N$.

Let $\mathcal L$ be an analytic set of $N\times S$. Let $(x,o) \in \mathcal L$. Assume $S$ is an irreducible germ of a complex space at $o$.
We say that  $\mathcal L$ is a \em relative Legendrian variety of $N$ over $S$  at \em $(x,o)$  if there is a relative Legendrian variety 
$\widetilde{\mathcal L}$ of $(N,x)$ over $(T_oS,0)$  that is a lifting of 
the germ of $\mathcal L$ at $(x,o)$. Assume $S$ is a germ of a complex space at $o$ with irreducible components $S_i, i\in I$. We say that  $\mathcal L$ is a \em relative Legendrian variety of $N$ over $S$  at \em $(x,o)$  if $S_i \times_S \mathcal L$ is a relative Legendrian variety of $S_i\times_S N$ over $S_i$ at $(x,o)$, for each $i \in I$.

We say that  $\mathcal L$ is a \em relative Legendrian variety 
 of $N\times S$ \em if $\mathcal L$ is a relative Legendrian variety of $N\times S$ at $(x,o)$ for each $(x,o)\in \mathcal L$.

Let $Y$ be a reduced analytic set of $M$.
Let $\mathcal Y$ be a flat deformation of $Y$ over $S$.
Set $X=M\times S\setminus \mathcal Y_{\rm sing}$.
We say that the Zariski closure of $\mathbb P^*_{\mathcal Y_{\rm reg}}(X/S)$ in $\mathbb P^*(M|S)$ is the \em conormal
$\mathbb P^*_{\mathcal Y}(M|S)$ of $\mathcal Y$ over $S$. \em

\begin{theorem}
The conormal of $\mathcal Y$ over $S$ is a relative Legendrian variety of $\mathbb P^*(M|S)$.
If $\mathcal Y$ has irreducible components $\mathcal Y_1,...,\mathcal Y_r$,
\begin{equation*}
\mathbb P^*_\mathcal{Y}(M|S)=
\cup_{i=1}^r
\mathbb P^*_{\mathcal{Y}_i}(M|S).
\end{equation*}
\end{theorem}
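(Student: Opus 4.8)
The plan is to reduce the statement to the classical fact that the conormal of a reduced analytic set is a Legendrian variety, carried out fibrewise over $S$, and then to propagate the contact condition from the smooth locus to its Zariski closure. Set $n=\dim M$, so that $\mathbb P^*(M|S)=\mathbb P^*M\times S$ has relative fibre $\mathbb P^*M$ of dimension $2n-1$, and write $d=\dim Y$. First I would work locally and fix a partial system of symplectic coordinates $(x_1,\dots,x_n,\xi_1,\dots,\xi_n)$ on $T^*(M|S)$, so that on $\{\xi_i\neq 0\}$ the relative contact structure $\mathfrak C$ is generated by $\omega_i=\xi_i^{-1}\theta$, where $\theta=\sum_j\xi_j\,dx_j$ is the relative tautological form. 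On the fibrewise regular locus $\mathcal Y_{\rm reg}$ the relative conormal $\mathbb P^*_{\mathcal Y_{\rm reg}}(X/S)$ is, by construction, the projectivised kernel of the morphism $\rho_\varphi$ attached to the inclusion of $\mathcal Y_{\rm reg}$, that is, the annihilator in $\mathbb P^*(M|S)$ of the relative tangent bundle $T(\mathcal Y_{\rm reg}/S)$ of the fibres. This is a projectivised bundle whose fibres have dimension $n-d-1$ over the equidimensional base $\mathcal Y_{\rm reg}$ of dimension $d+\ell$, hence a pure-dimensional smooth set of dimension $(d+\ell)+(n-d-1)=n+\ell-1$, as demanded by the definition.

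Next I would check that each $\omega_i$ vanishes on $\mathbb P^*_{\mathcal Y_{\rm reg}}(X/S)$, which is the relative form of the standard computation: a tangent vector $v$ to the relative conormal projects under $d\pi$ to a relative tangent vector of a fibre of $\mathcal Y_{\rm reg}\to S$, and the covector indexing the point annihilates that vector by the very definition of the conormal, so $\theta(v)=0$ and hence $\omega_i(v)=0$. Since $\mathbb P^*_{\mathcal Y}(M|S)$ is by definition the Zariski closure of the smooth set $\mathbb P^*_{\mathcal Y_{\rm reg}}(X/S)$, the latter is open and dense in the closure, so the dimension and pure-dimensionality persist; moreover $\theta$ is holomorphic, whence by continuity along each connected component of the regular part of the closure the forms $\omega_i$ vanish on all of it. When $S$ is smooth this already yields, through the characterisation by correct pure dimension together with vanishing of every local generator of $\mathfrak C$ on the regular part, that $\mathbb P^*_{\mathcal Y}(M|S)$ is a relative Legendrian variety.

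For a general, possibly singular, base $S$ one must instead verify the pointwise condition at each $(x,o)$, namely exhibit a relative Legendrian lifting $\widetilde{\mathcal L}$ over $(T_oS,0)$ of the germ of the conormal. Here I would use the flatness of $\mathcal Y\to S$ in an essential way: after restricting to each irreducible component $S_i$, so that $S_i\times_S\mathbb P^*_{\mathcal Y}(M|S)$ is the conormal of the flat family $S_i\times_S\mathcal Y$, flatness ensures that forming the relative conormal commutes with the base change to the smooth space $T_oS$, so that the conormal of a flat lifting of $\mathcal Y$ over $(T_oS,0)$ furnishes the required $\widetilde{\mathcal L}$, to which the preceding smooth-base argument applies. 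I expect this compatibility between the singularities of $S$ and the conormal construction to be the main obstacle, and it is precisely the point at which flatness cannot be dispensed with.

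Finally, the decomposition into irreducible components follows formally. Off a set of lower dimension, $\mathcal Y_{\rm reg}$ is the disjoint union of the $(\mathcal Y_i)_{\rm reg}$, since the pairwise intersections $\mathcal Y_i\cap\mathcal Y_j$ lie in $\mathcal Y_{\rm sing}$ and have strictly smaller dimension; consequently $\mathbb P^*_{\mathcal Y_{\rm reg}}(X/S)=\bigcup_{i=1}^r\mathbb P^*_{(\mathcal Y_i)_{\rm reg}}(X/S)$. Passing to Zariski closures, which commute with finite unions and are unaffected by the removal of the lower-dimensional intersection locus, gives $\mathbb P^*_{\mathcal Y}(M|S)=\bigcup_{i=1}^r\mathbb P^*_{\mathcal Y_i}(M|S)$, completing the argument.
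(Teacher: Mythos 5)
The paper itself does not prove this theorem: Section~\ref{RCG} explicitly ``recalls the main results of relative contact geometry'' and the statement is imported from \cite{EDLC}/\cite{Mendes} without argument, so there is no in-paper proof to compare against. Judged on its own merits, your treatment of the smooth-base case is the standard and correct argument: the dimension count $(d+\ell)+(n-d-1)=n+\ell-1$ matches the definition, the vanishing of $\theta$ on $\mathbb P^*_{\mathcal Y_{\rm reg}}(X/S)$ is the usual computation, and the persistence of both properties under Zariski closure is sound. The decomposition into components is also essentially right, modulo the small point that $\mathbb P^*_{\mathcal Y_i}(M|S)$ is defined by removing only $(\mathcal Y_i)_{\rm sing}$ rather than all of $\mathcal Y_{\rm sing}$, so you need (and implicitly use) that the part of $(\mathcal Y_i)_{\rm reg}$ lying over $\bigcup_{j\neq i}\mathcal Y_j$ is nowhere dense in the conormal of $\mathcal Y_i$.

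The genuine gap is in the singular (and non-reduced) base case, which is exactly where the definition forces you to produce a relative Legendrian lifting $\widetilde{\mathcal L}$ over $(T_oS,0)$. Your argument rests on two unproved assertions. First, that a flat lifting of $\mathcal Y$ over the smooth germ $(T_oS,0)$ exists: this is false for deformations of a general reduced analytic set (obstructed deformations over a singular base need not extend over the ambient tangent space). It does hold in the situation the paper actually uses --- $Y$ a hypersurface/plane curve, where one simply lifts the coefficients of a defining equation from $\mathcal O_S=\mathcal O_{T_oS}/J$ to $\mathcal O_{T_oS}$ --- but that restriction has to be stated and used. Second, and more seriously, the phrase ``flatness ensures that forming the relative conormal commutes with the base change'' is the entire content of the theorem at a singular point of $S$, not a citation-free fact: the conormal is defined as a Zariski \emph{closure}, and closure does not commute with restriction to a closed subspace, so you must actually show that $\mathbb P^*_{\widetilde{\mathcal Y}}(M|T_oS)\cap(\mathbb P^*M\times S)=\mathbb P^*_{\mathcal Y}(M|S)$ (e.g.\ via an equidimensionality or specialization argument for the family of conormals). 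Without that, the lifting you exhibit is not known to be a lifting of the germ of $\mathbb P^*_{\mathcal Y}(M|S)$, and for non-reduced $S$ the ``continuity on the regular part'' language has no meaning except through such a lifting.
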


\begin{theorem}\label{T:RCONORMAL-3}
Let $\mathcal L$ be an irreducible germ of a relative Legendrian analytic set of $\mathbb P^*(M|S)$. 
If the analytic set $\pi(\mathcal L)$ is a flat deformation over $S$ of an analytic set of $M$, $\mathcal L=\mathbb P^*_{\pi(\mathcal L)}(M|S)$.
\end{theorem}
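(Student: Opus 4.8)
The plan is to establish the inclusion $\mathcal L\subseteq\mathbb P^*_{\pi(\mathcal L)}(M|S)$ by a generic-point analysis on the smooth locus of $\mathcal L$, and then to promote it to an equality because both sides are irreducible relative Legendrian varieties of the same dimension. Write $\dim M=n$ and $\dim S=\ell$, so that $\mathcal L$, being a relative Legendrian variety, has pure dimension $n+\ell-1$. Since the relative Legendrian condition and the conormal are defined componentwise over the components of $S$ and through the liftings to $T_oS$ recalled above, I would first reduce to the case in which $S$ is smooth, replacing $\mathcal L$ by its lifting $\widetilde{\mathcal L}$ over $T_oS$; both sides of the asserted equality are compatible with this operation. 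Set $\mathcal Y:=\pi(\mathcal L)$. As $\pi$ is proper and $\mathcal L$ is irreducible, $\mathcal Y$ is an irreducible germ, and by hypothesis it is a flat deformation over $S$ of an analytic set of $M$; hence $\mathbb P^*_{\mathcal Y}(M|S)$ is defined and, by the preceding theorem, is a relative Legendrian variety, so of pure dimension $n+\ell-1$ as well.

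For the inclusion, let $U\subseteq\mathcal L_{\rm reg}$ be the set of smooth points $p=(x,[\xi],s)$ lying over $\mathcal Y_{\rm reg}$ at which the restriction of $\pi$ to the fibre $\mathcal L_s$ has locally constant rank equal to $\dim\mathcal Y_s$. This $U$ is dense, since $\mathcal L_{\rm sing}$, the corresponding critical locus, and $\pi^{-1}(\mathcal Y_{\rm sing})$ are proper closed subsets of the irreducible germ $\mathcal L$. In the symplectic coordinates introduced above a local generator $\omega$ of $\mathfrak C$ is, up to a unit, the tautological form $\langle\xi,dx\rangle$, so its vanishing on $\mathcal L_{\rm reg}$ forces $\xi$ to annihilate the image $d(\pi|_{\mathcal L_s})_p(T_p\mathcal L_s)=T_x\mathcal Y_s$. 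Thus $[\xi]$ is a conormal direction of the fibre $\mathcal Y_s$ at $x$, i.e. $p\in\mathbb P^*_{\mathcal Y_{\rm reg}}(X/S)$, and taking Zariski closures gives $\mathcal L\subseteq\mathbb P^*_{\mathcal Y}(M|S)$.

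To conclude, note that since $\mathcal Y$ is irreducible, $\mathcal Y_{\rm reg}$ is irreducible and the relative conormal over it is a projective bundle, so $\mathbb P^*_{\mathcal Y}(M|S)$ is irreducible; an irreducible germ contained in an irreducible germ of the same pure dimension must coincide with it, whence $\mathcal L=\mathbb P^*_{\pi(\mathcal L)}(M|S)$. I expect the main obstacle to be the generic-point step of the second paragraph: one must verify that the vanishing of the relative contact form really yields conormality with respect to the relative tangent space $T_x\mathcal Y_s$ of the fibres rather than the ambient tangent space, and that the locus $U$ is dense. This is precisely where flatness of $\pi(\mathcal L)$ over $S$ enters, as it forces the fibres, and hence the relative conormal, to be equidimensional of the expected dimension $n+\ell-1$; the reduction to smooth $S$ through the lifting to $T_oS$ must likewise be arranged so as not to disturb this pure-dimensionality.
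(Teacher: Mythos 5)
The paper does not actually prove Theorem \ref{T:RCONORMAL-3}: Section \ref{RCG} only \emph{recalls} the results of relative contact geometry, and this statement is taken from the references (\cite{EDLC}, \cite{Mendes}), so there is no in-paper argument to compare yours against. On its own terms, your proposal is the natural relative version of the classical fact that an irreducible Legendrian (or conic Lagrangian) variety which is not a union of fibres of $\pi$ equals the conormal of its projection, and the skeleton --- generic-point inclusion $\mathcal L\subseteq\mathbb P^*_{\pi(\mathcal L)}(M|S)$ followed by equality of irreducible germs of the same pure dimension $n+\ell-1$ --- is the right one; the first theorem of Section \ref{RCG} indeed supplies the dimension and Legendrian property of the right-hand side.

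Two points remain genuinely unproved rather than merely routine. First, the density of your locus $U$: you need that the relative rank of $\pi|_{\mathcal L_s}$ equals $\dim\mathcal Y_s$ on a dense subset of $\mathcal L$, and the degeneracy locus $Z=\{\,\mathrm{rank}\,d(\pi|_{\mathcal L/S})<\dim\mathcal Y_s\,\}$ is analytic in $\mathcal L$, so by irreducibility you must only exclude $Z=\mathcal L$; that exclusion uses precisely the flatness hypothesis (equidimensionality of the fibres $\mathcal Y_s$) together with $\pi(\mathcal L_s)=\mathcal Y_s$, and it also requires you to handle points where $\mathcal L$ is smooth but the fibre $\mathcal L_s$ is not, and to distinguish the relative regular locus $\mathcal Y_{\rm reg}$ (over which the conormal is actually defined, via $X=M\times S\setminus\mathcal Y_{\rm sing}$) from the regular locus of the total space. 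You correctly identify this as the main obstacle but do not close it. Second, the reduction to smooth $S$ via the lifting $\widetilde{\mathcal L}$ over $T_oS$ is asserted to be ``compatible with both sides''; since the very definition of relative Legendrian over a singular or non-reduced base is made through liftings and components of $S$, this compatibility (in particular that the conormal of $\pi(\widetilde{\mathcal L})$ restricts to the conormal of $\pi(\mathcal L)$, and that flatness is preserved) is part of what has to be checked, not a formality. With those two verifications supplied, the argument is complete.
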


 Let $\theta=\xi dx + \eta dy$ be the canonical $1$-form of $T^\ast \C^2 = \C^2 \times \C^2$. 
Hence $\pi=\pi_{\mathbb C^2} : \mathbb{P}^\ast \C^2=\C^2 \times \mathbb{P}^1 \to \C^2$ is given by $\pi(x,y; \xi : \eta)=(x,y)$. 
 Let $U\,[V]$ be the open subset of  $\mathbb{P}^\ast \C^2$ defined by $ \eta \neq 0\,[\xi \neq 0]$. 
 Then $\theta / \eta\,[\theta/\xi]$ defines a contact form $dy-pdx\,[dx-qdy]$ on $U\,[V]$, 
 where $p=- \xi / \eta\,[q=- \eta / \xi]$. 
 Moreover, $dy-pdx$ and $dx-qdy$ define the structure of contact manifold on $\mathbb{P}^\ast \C^2$.

 If $L$ is a germ of a Legendrian curve of $\mathbb P^*M$ and $L$ is not a fiber of $\pi_M$, $\pi_M(L)$ is a germ of plane curve with irreducible tangent cone
 and $L=\mathbb P^*_{\pi_M(L)}M$.

Let $Y$ be the germ of a plane curve with irreducible tangent cone at a point $o$ of a surface $M$.
Let $L$ be the conormal of $Y$. Let $\sigma$ be the only point of $L$ such that $\pi_M(\sigma)=o$.
Let $k$ be the multiplicity of $Y$. Let $f$ be a defining function of $Y$. In this situation we will always choose a system of local coordinates $(x,y)$  of $M$ such that the tangent cone $C(Y)$ of $Y$ equals $\{y=0\}$.

 \begin{lemma}\label{GOODCURVE-3}
The following statements are equivalent$:$
\begin{enumerate}
\item
mult$_\sigma(L)=$mult$_o(Y)$\em ; \em
\item
$C_\sigma(L)\not\supset (D\pi(\sigma))^{-1}(0,0)$\em ; \em
\item
$f\in (x^2,y)^k$\em ; \em
\item
if $t\mapsto (x(t),y(t))$ parametrizes a branch of $Y$, $x^2$ divides $y$.
\end{enumerate}
\end{lemma}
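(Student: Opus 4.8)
The plan is to reduce all four conditions to a single numerical inequality attached to each branch of $Y$, and then read off the equivalences. I choose the coordinates $(x,y)$ as prescribed, so that $C(Y)=\{y=0\}$, and work in the contact chart $U$ of $\mathbb{P}^*\C^2$ with coordinates $(x,y,p)$, $p=-\xi/\eta$. Since the tangent line of every branch of $Y$ at $o$ is $\{y=0\}$, the conormal direction there is $(\xi:\eta)=(0:1)$, so $\sigma=(0,0,0)\in U$; and on the smooth locus of $Y$ the curve $L$ is the Legendrian lift given by $p=dy/dx=-f_x/f_y$.

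First I would parametrize. Let $t\mapsto(x(t),y(t))$ be a branch, and set $k_i=\mathrm{ord}_t\,x$, $\ell_i=\mathrm{ord}_t\,y$; tangency to $\{y=0\}$ means $\ell_i>k_i$, and $k_i$ is the multiplicity of that branch. The corresponding branch $L_i$ of $L$ is parametrized by $(x(t),y(t),p(t))$ with $p=\dot y/\dot x$, so $\mathrm{ord}_t\,p=\ell_i-k_i$. The numerical condition I will isolate is
\[
\ell_i\ge 2k_i\qquad\text{for every branch } i,
\]
which is exactly the statement that $x^2$ divides $y$ on each branch, i.e. condition (4).

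Next I would establish (1)$\Leftrightarrow$(4) and (2)$\Leftrightarrow$(4) directly from the parametrization. The multiplicity of a branch at the origin is the minimum of the orders of its coordinate functions, and multiplicities add over branches, so $\mathrm{mult}_\sigma L=\sum_i\min(k_i,\ell_i,\ell_i-k_i)=\sum_i\min(k_i,\ell_i-k_i)$ while $\mathrm{mult}_o Y=\sum_i k_i$; equality holds iff $\ell_i-k_i\ge k_i$ for all $i$, giving (1)$\Leftrightarrow$(4). For (2), note that $(D\pi(\sigma))^{-1}(0,0)$ is the tangent to the fibre $\pi^{-1}(o)$, namely the $p$-axis. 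The tangent cone $C_\sigma L$ is the union of the branch tangents, and the tangent of $L_i$ is the $p$-axis precisely when $\mathrm{ord}_t\,p=\ell_i-k_i$ is the strict minimum of the three orders, i.e. when $\ell_i<2k_i$; hence $C_\sigma L$ contains the $p$-axis iff some branch fails $\ell_i\ge 2k_i$, giving (2)$\Leftrightarrow$(4).

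Finally, (3)$\Leftrightarrow$(4), which I expect to be the main obstacle. I would first show that $f\in(x^2,y)^k$ iff the weighted order $\nu(f)$ of $f$ for the weights $\mathrm{wt}(x)=1$, $\mathrm{wt}(y)=2$ is at least $2k$, by checking that a monomial $x^ay^b$ lies in $(x^2,y)^k$ exactly when $a+2b\ge 2k$. Since the tangent cone of $Y$ is $\{y=0\}$ with multiplicity $k$, the term $y^k$ occurs in $f$, so $\nu(f)\le 2k$; hence (3) is equivalent to $\nu(f)=2k$. Writing $f=\prod_i f_i$ and using additivity $\nu(f)=\sum_i\nu(f_i)$ together with the bound $\nu(f_i)\le 2k_i$ reduces (3) to $\nu(f_i)=2k_i$ for every $i$. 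The technical heart is then the computation of $\nu(f_i)$ from the Newton polygon of $f_i$: its first edge runs from $(0,k_i)$ with the leading Puiseux exponent $\ell_i/k_i$ as inverse slope, and minimizing $a+2b$ along the convex lower boundary shows that the minimum equals $2k_i$ when $\ell_i\ge 2k_i$ and is strictly smaller otherwise. This yields $\nu(f_i)=2k_i\Leftrightarrow\ell_i\ge 2k_i$, completing (3)$\Leftrightarrow$(4) and hence the equivalence of all four statements.
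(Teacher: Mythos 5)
Your proof is correct: reducing all four conditions to the branchwise inequality $\ell_i\ge 2k_i$ is sound, the identification of $x^ay^b\in(x^2,y)^k$ with $a+2b\ge 2k$ checks out (the parity of $a+2b$ takes care of the floor in $\lfloor a/2\rfloor+b\ge k$), and the Newton-polygon computation $\nu(f_i)=\min(\ell_i,2k_i)$ is valid because an irreducible branch tangent to $\{y=0\}$ has Newton polygon the single segment from $(0,k_i)$ to $(\ell_i,0)$. The paper itself states this lemma without proof (it is imported from the authors' companion work \cite{EDLC}), so there is no argument to compare against; your route through the weighted order with $\mathrm{wt}(x)=1$, $\mathrm{wt}(y)=2$ and the orders of $x$, $y$, $p=\dot y/\dot x$ along each branch is the standard and natural one.
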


\begin{definition}
Let $S$ be a reduced complex space. Let $Y$ be a reduced plane curve. 
Let $\mathcal Y$ be a deformation of $Y$ over $S$.
We say that $\mathcal Y$ is \em generic \em if its fibers are generic.
If $S$ is a non reduced complex space we say that $\mathcal Y$ is \em generic \em if $\mathcal Y$ admits a generic lifting.
\end{definition}

Given a flat deformation $\mathcal Y$ of a plane curve $Y$ over a complex space $S$ we will denote $\mathbb P_{\mathcal Y}^\ast (\C^2|S)$ by $\mathcal Con(\mathcal Y)$.

\begin{theorem}[Theorem $1.3$, \cite{CN}]\label{L:EQUIEQUI-3}
Let $\chi: (\C^3,0) \to (\C^3,0)$ be a germ of  a contact transformation. Let $L$ be a germ of a Legendrian curve of $\C^3$ at the origin. If $L$ and $\chi(L)$ are in generic position, $\pi(L)$ and $\pi(\chi(L))$ are equisingular.
\end{theorem}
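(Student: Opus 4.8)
The plan is to reduce the statement to the equality of the resolution (multiplicity) data of the two plane projections, and then to prove that equality by an induction along the blow-up tower in which, at each stage, a multiplicity is read off as an invariant of an \emph{abstract} curve germ and is therefore automatically preserved by the biholomorphism $\chi$. Write $o$ for the origin of $\C^2$ and $\sigma$ for the origin of $\C^3=\mathbb P^\ast\C^2$, so $\chi(\sigma)=\sigma$. Since $L$ is in generic position, none of its branches is a fiber of $\pi$, so $Y:=\pi(L)$ is a plane curve germ with irreducible tangent cone and $L=\mathbb P^\ast_Y\C^2$ (the case $S=\mathrm{pt}$ of Theorem~\ref{T:RCONORMAL-3}); because $\chi$ is a contactomorphism, $K:=\chi(L)$ is again a Legendrian germ with no fiber component, and $Y':=\pi(K)$ satisfies $K=\mathbb P^\ast_{Y'}\C^2$. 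The equisingularity type of a plane curve is determined by the multiplicity sequences of its branches together with their pairwise intersection multiplicities, so it suffices to prove that $Y$ and $Y'$ have the same multiplicity at $o$ and, after every admissible sequence of blow-ups, at every infinitely near point.

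The base case is immediate and already carries the main idea. By generic position and Lemma~\ref{GOODCURVE-3}(1), $\mathrm{mult}_o(Y)=\mathrm{mult}_\sigma(L)$ and $\mathrm{mult}_o(Y')=\mathrm{mult}_\sigma(K)$. But $\chi$ is in particular a germ of biholomorphism of $(\C^3,\sigma)$, hence it preserves the multiplicity of analytic germs: $\mathrm{mult}_\sigma(L)=\mathrm{mult}_\sigma(\chi(L))=\mathrm{mult}_\sigma(K)$. Combining the three equalities gives $\mathrm{mult}_o(Y)=\mathrm{mult}_o(Y')$. Thus the first multiplicity is a contact invariant \emph{precisely because} generic position converts it into the multiplicity of the abstract curve germ $L$, which any biholomorphism must respect.

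To propagate this to the whole sequence I would blow up. Blowing up $\C^2$ at $o$ replaces $Y$ by its strict transform $Y_1$ and, on the level of conormals, replaces $L=\mathbb P^\ast_Y\C^2$ by the conormal $L_1$ of $Y_1$; similarly $Y'\rightsquigarrow Y_1'$ and $K\rightsquigarrow K_1$. The essential point to establish is that this passage is compatible with contact transformations: the contactomorphism $\chi$ relating $L$ and $K$ should induce a contactomorphism $\chi_1$ relating $L_1$ and $K_1$, with $L_1,K_1$ again in generic position over the corresponding infinitely near points. Granting this, the base-case argument applies verbatim to $(L_1,K_1,Y_1,Y_1')$ and yields $\mathrm{mult}(Y_1)=\mathrm{mult}(Y_1')$; iterating over the resolution tower, and tracking the parallel biholomorphism-invariance of the intersection numbers between branches, gives equality of the complete equisingularity data, which is the assertion. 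An alternative framing of the same conclusion is that $\pi\circ\chi$ restricts on $L$ to a generic projection, whence equisingularity of $\pi(L)$ and $\pi(\chi(L))$ follows from the independence of the equisingularity type on the choice of generic projection \cite{Neto}.

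The main obstacle is exactly the compatibility invoked in the inductive step: that blow-up of the base commutes, up to a canonical contact transformation, with the conormal construction, and that a contactomorphism of the ambient $\mathbb P^\ast\C^2$ lifts to one relating the conormals of the strict transforms while preserving generic position. This is where the relative contact geometry of Section~\ref{RCG} must do the real work. The delicate part is verifying that generic position, equivalently conditions \ref{GOODCURVE-3}(3)--(4), is inherited by the strict transforms at every infinitely near point, so that the clean identity $\mathrm{mult}(\text{strict transform})=\mathrm{mult}(\text{associated conormal})$ remains available at each level of the tower and the biholomorphism argument can be repeated unchanged.
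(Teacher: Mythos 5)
The paper offers no proof of this statement: it is quoted verbatim as Theorem~1.3 of \cite{CN} and used as a black box, so there is nothing internal to compare your argument against. Judged on its own terms, your base case is sound and is a genuinely good observation: generic position converts $\mathrm{mult}_o(\pi(L))$ into $\mathrm{mult}_\sigma(L)$ via Lemma~\ref{GOODCURVE-3}(1), and a contact transformation is in particular a biholomorphism of $(\C^3,\sigma)$, hence preserves the multiplicity of the abstract germ. That correctly gives $\mathrm{mult}_o(\pi(L))=\mathrm{mult}_o(\pi(\chi(L)))$.

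The inductive step, however, is where all the content of the theorem lives, and as written it does not go through. First, a contact transformation of $\mathbb P^\ast\C^2$ does not descend to a map of $\C^2$, so there is no induced map on the blow-up of $\C^2$ at $o$ and no canonical candidate for the $\chi_1$ you posit relating the conormals of the strict transforms; relating blow-up of the base to an operation on the contact manifold requires a specific (paraboloidal/Legendre-type) transformation as in Theorem~\ref{ALLCONTACT-3}, and this is precisely the machinery you would have to build. Second, generic position is \emph{not} automatically inherited: the strict transform of $Y$ at an infinitely near point need not satisfy condition (3) of Lemma~\ref{GOODCURVE-3} (its tangent cone can be the exceptional direction), so the identity $\mathrm{mult}(\text{strict transform})=\mathrm{mult}(\text{conormal})$ on which your induction relies is simply unavailable at later stages without first applying a further contact transformation to restore genericity --- and then one must argue that doing so does not change the equisingularity class, which is again the theorem being proved. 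Third, the pairwise intersection multiplicities of the branches of $\pi(L)$ are not the intersection data of the branches of $L$ in $\C^3$ in any naive sense, and ``biholomorphism-invariance of the intersection numbers'' is asserted rather than established. Your closing alternative --- that $\pi\circ\chi$ is a generic projection of $L$ and one invokes the independence of the equisingularity type on the generic projection \cite{Neto} --- is in fact the standard and correct route, but it reduces the statement to an equivalent theorem rather than proving it. In short: correct and clean base case, but the induction is a sketch of a program whose two key compatibilities (lifting $\chi$ across blow-up, persistence of generic position) are exactly the hard points and are left unproved.
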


\begin{definition}\label{EQUILEG}
Two Legendrian curves are \emph{equisingular} if their generic plane projections are equisingular. 
\end{definition}

\begin{lemma}
Assume $Y$ is a generic plane curve and $Y\hookrightarrow\mathcal Y$ defines an equisingular deformation of $Y$ with trivial normal cone along its trivial section. Then $\mathcal Y$ is generic.
\end{lemma}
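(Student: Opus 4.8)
The plan is to reduce everything to the fourth characterization of genericity in Lemma \ref{GOODCURVE-3}: a branch is generic precisely when, in the chosen coordinates with $C(Y)=\{y=0\}$, some parametrization $t\mapsto(x(t),y(t))$ satisfies $x^2\mid y$, that is $\operatorname{ord}_t y\ge 2\operatorname{ord}_t x$. I would first translate the hypotheses into statements about a parametrization of the whole family. Since $Y$ has one Puiseux pair and the deformation is equisingular, the multiplicity $k=\operatorname{ord}_t x$ and the characteristic exponent $n$ are constant along $S$. Moreover an equisingular deformation of a branch admits a relative (simultaneous) parametrization $t\mapsto(X(t,s),Y(t,s))$ over $S$ restricting to $(x(t),y(t))$ at the distinguished point, with $X(t,s)$ of $t$-order $k$ and $Y(t,s)=\sum_i a_i(s)t^i$, where for exponents $i<n$ only multiples of $k$ can occur.

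Next I would use the two triviality hypotheses to keep the coordinates adapted fibrewise. The trivial section forces the singular point to stay at the origin, and the trivial normal cone along it forces the tangent cone of every fibre to remain $\{y=0\}$; in terms of the parametrization this says $\operatorname{ord}_t Y(\cdot,s)>\operatorname{ord}_t X(\cdot,s)=k$, i.e. the coefficient $a_k(s)$ vanishes identically on $S$ (including on nilpotents, in the non-reduced case). Because $Y$ itself is generic, $\operatorname{ord}_t y\ge 2k$ already forces $n\ge 2k$; and since $n$ is an equisingularity invariant, $n\ge 2k$ holds for every fibre.

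Combining these, once $a_k(s)\equiv 0$ the only exponents of $Y(\cdot,s)$ that could fall below $2k$ would be multiples of $k$ strictly between $k$ and $2k$ (there are none) or the characteristic exponent $n$ (which is $\ge 2k$); hence $\operatorname{ord}_t Y(\cdot,s)\ge 2k=2\operatorname{ord}_t X(\cdot,s)$. Thus $X^2\mid Y$ relatively, so by Lemma \ref{GOODCURVE-3}(4) every fibre is generic. Reading the same divisibility at the level of the family, the relative parametrization yields the required generic lifting when $S$ is non-reduced, which settles both cases of the definition.

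I expect the main obstacle to be the relative step rather than the arithmetic: producing a relative parametrization of the equisingular deformation whose Puiseux exponent structure is controlled over all of $S$ — in particular over a non-reduced base — and verifying that \emph{trivial normal cone along the trivial section} is exactly the family-level condition $a_k(s)\equiv 0$ rather than a merely fibrewise one. Establishing this simultaneous parametrization via the structure theory of equisingular families, and checking that $X^2\mid Y$ descends to a genuine generic lifting, is where the real work lies; the fibrewise order count above is then immediate.
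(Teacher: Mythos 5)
The paper states this lemma without proof (it is presumably deferred to \cite{EDLC}), so there is no in-paper argument to compare yours against; I can only assess your proposal on its own terms. Your core strategy is the right one: translate genericity into condition (4) of Lemma \ref{GOODCURVE-3}, note that the trivial normal cone along the trivial section forces the tangent cone of every fibre to be $\{y=0\}$ (killing the coefficient of $t^{k}$ in each branch), and then use the equisingularity invariance of the multiplicity $k$ and of the first characteristic exponent $\beta_1$ together with the fact that exponents below $\beta_1$ must be multiples of $k$, so that $\mathrm{ord}_t Y(\cdot,s)\geq\min(2k,\beta_1)=2k$ once $a_k(s)=0$ and $\beta_1>2k$ is inherited from the genericity of the central fibre. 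That order count is correct.

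Two adjustments. First, you quietly assume $Y$ is irreducible with one Puiseux pair, which is not in the hypotheses; but the only fact you actually use --- that exponents of $y$ below the first characteristic exponent are multiples of the multiplicity --- holds for an arbitrary branch, so the argument applies branch by branch to any reduced $Y$ with irreducible tangent cone (for smooth branches the condition $x^2\mid y$ is exactly the tangency to $\{y=0\}$ already forced by the trivial normal cone). Second, the step you flag as ``where the real work lies'' largely dissolves once the paper's definitions are unwound: for reduced $S$, genericity of $\mathcal Y$ is \emph{defined} fibrewise, so no relative parametrization is needed at all --- the fibrewise count is the whole proof; for non-reduced $S$, genericity, equisingularity and triviality of the normal cone are all formulated via liftings to a reduced (smooth) base, so one simply chooses an equisingular lifting with trivial normal cone along the trivial section and applies the reduced case to conclude that this lifting is generic, which is exactly the definition of $\mathcal Y$ being generic. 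With those two points made explicit your argument is complete.
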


\begin{definition}\label{DEFLEGDEF-3}
Let $L$ be (a germ of) a Legendrian curve of $\mathbb C^3$ in generic position.
 Let $\mathcal L$ be a relative Legendrian curve over (a germ of) a complex space $S$ at $o$. We say that an immersion $i:L\hookrightarrow\mathcal L$ defines a \em deformation 
\begin{equation}\label{LEGMAP-3}
 \mathcal L \hookrightarrow \mathbb C^3\times S \to S
\end{equation}
of the Legendrian curve $L$ over $S$ \em if $i$ induces an isomorphism of $L$ onto $\mathcal L_o$ and there is a generic deformation $\mathcal Y$ of a plane curve $Y$ over $S$  such that
$\chi(\mathcal L)$ is isomorphic to $\mathcal Con \mathcal Y$ by a relative contact transformation verifying (\ref{DIDENTITY3}). 

We say that the deformation (\ref{LEGMAP-3}) is \em equisingular \em if $\mathcal Y$ is equisingular. We denote by $\widehat{\mathcal Def}^{es}_L$ the category of equisingular deformations of $L$.
\end{definition}

\begin{remark}

We do not demand the flatness of the morphism (\ref{LEGMAP-3}). 

\end{remark}

\begin{lemma}
Using the notations of definition $\ref{DEFLEGDEF-3}$, given a section $\sigma:S\to \mathcal L$ of $\mathbb C^3\times S\to S$, there is a relative contact transformation $\chi$ such that $\chi\circ \sigma$ is trivial. Hence $\mathcal L$ is isomorphic to a deformation with trivial section.
\end{lemma}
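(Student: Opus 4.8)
The plan is to produce $\chi$ explicitly in contact coordinates; the point is that the group of relative contactomorphisms of $\mathbb P^\ast(\C^2|S)$ acts transitively enough to sweep any section onto the distinguished one. Since $L$ is in generic position its distinguished point lies in the chart $U=\{\eta\neq 0\}$, where the relative contact structure $\mathfrak C$ is generated by $\omega=dy-p\,dx$ with $p=-\xi/\eta$; after an initial translation I may assume this point is the origin, so that the trivial section is $s\mapsto(0,0,0,s)$. Writing $\sigma$ in these coordinates produces three germs of holomorphic functions $a,b,c\colon S\to\C$ with $\sigma(s)=(a(s),b(s),c(s),s)$, and I want a relative contact transformation $\chi$, fibered over $S$, with $\chi(\sigma(s))=(0,0,0,s)$.

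First I would remove the $x,y$ components by $\chi_1(x,y,p,s)=(x-a(s),\,y-b(s),\,p,\,s)$, which carries $\sigma(s)$ to $(0,0,c(s),s)$. Next I would kill the remaining $p$ component by the shear $\chi_2(x,y,p,s)=(x,\,y-c(s)x,\,p-c(s),\,s)$, which carries $(0,0,c(s),s)$ to $(0,0,0,s)$. The composite $\chi=\chi_2\circ\chi_1$ is then the desired map, provided each factor really is a relative contact transformation. This is the content of two elementary computations: the relative pullback of $\omega$ under $\chi_1$ is $d(y-b(s))-p\,d(x-a(s))=dy-p\,dx=\omega$, and under $\chi_2$ it is $d(y-c(s)x)-(p-c(s))\,dx=dy-p\,dx=\omega$. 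Hence $\chi^\ast\omega=\omega$ and $\chi$ sends the generator of $\mathfrak C$ to a generator of $\mathfrak C$.

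The essential observation, and the only place where the relative (rather than absolute) nature of the statement enters, is that every differential above is taken in $\Omega^1_{N|S}$: the terms $a'(s)\,ds$, $b'(s)\,ds$, $c'(s)\,ds$ and $x\,c'(s)\,ds$ all lie in the $S$-directions and are annihilated by the relative differential. This is exactly why letting the coefficients $a,b,c$ vary holomorphically with $s$ does no harm, and it is the step I expect to require the most care to phrase cleanly. Each $\chi_i$ is fibered over $S$ by construction, so $\chi$ satisfies $q_{N}\circ\chi=q_{N}$ and restricts to the identity in the base directions, which is what is needed for it to respect the normalization $(\ref{DIDENTITY3})$ of Definition $\ref{DEFLEGDEF-3}$.

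Finally, being a relative contact transformation, $\chi$ carries the relative Legendrian curve $\mathcal L$ to a relative Legendrian curve $\chi(\mathcal L)\hookrightarrow\C^3\times S\to S$ isomorphic to it, and by construction $\chi\circ\sigma$ is the trivial section of $\chi(\mathcal L)$; thus $\mathcal L$ is isomorphic, as a deformation, to one with trivial section. I expect the main obstacle to be not the algebra but the bookkeeping needed to guarantee that the explicit maps stay within the admissible class: one must check that $\sigma$, and the whole construction, remains inside the locus $\{\eta\neq 0\}$ where the coordinates $(x,y,p)$ are valid, and that the resulting deformation is still of the type required by Definition $\ref{DEFLEGDEF-3}$. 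Once the germ of the section is confined to $U$ both points are immediate, but they should be recorded explicitly.
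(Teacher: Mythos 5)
The paper states this lemma without proof, so there is nothing to compare your argument against line by line; judged on its own, your construction is correct and is the natural one. Writing $\sigma(s)=(a(s),b(s),c(s),s)$ with $a,b,c\in\frak m_S$ (automatic, since $\sigma(o)$ must be the distinguished point of the germ $L=\mathcal L_o$, so no ``initial translation'' is actually needed), the translation $\chi_1(x,y,p,s)=(x-a,y-b,p,s)$ and the shear $\chi_2(x,y,p,s)=(x,y-cx,p-c,s)$ each pull $dy-p\,dx$ back to itself in $\Omega^1_{X\times S/S}$, precisely because the relative differential annihilates $a'\,ds$, $b'\,ds$ and $xc'\,ds$; the composite carries $\sigma$ to the trivial section, and both factors restrict to the identity on the central fiber since $a,b,c$ vanish at $o$. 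Two small points deserve more care than you give them. First, your closing remark that $\chi$ ``respects the normalization (\ref{DIDENTITY3})'' is misleading: the third condition of (\ref{DIDENTITY3}), $\chi(0,s)=(0,s)$, is exactly what your $\chi$ must \emph{violate} when the section is nontrivial --- the whole point of the lemma is to reduce to the situation where (\ref{DIDENTITY3}) becomes the appropriate normalization for subsequent transformations. Second, to justify the word ``Hence'' you should say explicitly why $\chi(\mathcal L)$ is still an object of the category of Definition \ref{DEFLEGDEF-3}, i.e.\ why composing the contact transformation implicit in that definition with your $\chi^{-1}$ still exhibits $\chi(\mathcal L)$ as (isomorphic to) the conormal of a generic deformation; this is routine but is the only step that uses the definition rather than a coordinate computation.
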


Consider the maps
$i:X\hookrightarrow X\times S$ and $q: X\times S \to S$.

\begin{theorem}\label{CONTACTEQUI-3}
Assume $\mathcal Y$ defines an equisingular deformation of a generic plane curve $Y$ with trivial normal cone along its trivial section. Let $\chi: X\times S \to X\times S$ be a relative contact transformation verifying 
\begin{equation*}
\chi \circ i=i,  ~~ q \circ \chi=q ~ \hbox{ \rm and } ~ \chi(0,s)= (0,s) ~~
\hbox{ \rm for each } s.
\end{equation*}
 Then $\mathcal Y^\chi=\pi\left(\chi(\mathcal Con \mathcal Y)\right)$ is a generic equisingular deformation of $Y$.
\end{theorem}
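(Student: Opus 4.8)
The plan is to recognize $\mathcal Y^\chi$ as the plane projection of the transformed conormal, to identify this transformed conormal with the conormal of $\mathcal Y^\chi$ via Theorem \ref{T:RCONORMAL-3}, and then to transport the three defining properties (being a deformation of $Y$, equisingularity, genericity) fiber by fiber and afterwards across the whole family. First I would record that, by the first theorem of this section, $\mathcal Con\mathcal Y=\mathbb P^*_{\mathcal Y}(\C^2|S)$ is a relative Legendrian variety of $\mathbb P^*(\C^2|S)=X\times S$. Since $\chi$ is a relative contact transformation, it carries local generators of the relative contact sheaf to local generators, hence carries relative Legendrian varieties to relative Legendrian varieties; thus $\Lambda:=\chi(\mathcal Con\mathcal Y)$ is again a relative Legendrian variety (decomposing into irreducible components and arguing componentwise if $Y$ is reducible). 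Because $q\circ\chi=q$, the map $\chi$ is fibered over $S$, and for each $s$ its restriction $\chi_s$ to the fiber $X\times\{s\}$ is an honest contact transformation of $\C^3$.

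Next I would analyse the situation one fiber at a time. For fixed $s$, the fiber $\mathcal Con(\mathcal Y)_s$ is the conormal $\mathbb P^*_{\mathcal Y_s}\C^2$ of the generic plane curve $\mathcal Y_s$, so $\chi_s(\mathcal Con(\mathcal Y)_s)$ is a germ of a Legendrian curve. The hypotheses $\chi\circ i=i$ and $\chi(0,s)=(0,s)$ guarantee that this Legendrian curve is not a fiber of $\pi$, so by the property recalled above $\pi(\chi_s(\mathcal Con(\mathcal Y)_s))=\mathcal Y^\chi_s$ is a plane curve germ with irreducible tangent cone whose conormal is exactly $\chi_s(\mathcal Con(\mathcal Y)_s)$. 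Theorem \ref{L:EQUIEQUI-3} then applies to the contact transformation $\chi_s$ and the Legendrian curve $\mathcal Con(\mathcal Y)_s$: since both $\mathcal Y_s$ and $\mathcal Y^\chi_s$ are in generic position, they are equisingular. As $\mathcal Y$ is an equisingular deformation, every $\mathcal Y_s$ is equisingular to $Y$, and hence so is every $\mathcal Y^\chi_s$; in particular the multiplicity $k$ and the $\delta$-invariant are constant along $S$.

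I would then upgrade these fiberwise statements to the family. The constancy of the multiplicity and $\delta$-invariant on the reduced fibers, together with the fact that $\pi$ restricts to a finite birational map on $\Lambda$, yields that $\mathcal Y^\chi\to S$ is flat with reduced fibers, i.e.\ a flat deformation of $Y=\mathcal Y^\chi_o$ (the special fiber being unchanged because $\chi\circ i=i$ forces $\chi$ to be the identity on $X\times\{o\}$). With flatness in hand, Theorem \ref{T:RCONORMAL-3} applied to the irreducible relative Legendrian variety $\Lambda$ identifies $\Lambda=\mathbb P^*_{\mathcal Y^\chi}(\C^2|S)=\mathcal Con(\mathcal Y^\chi)$, so that the fiberwise equisingularity assembles into equisingularity of the deformation $\mathcal Y^\chi$. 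Finally, since $\chi$ fixes the trivial section and is the identity on the special fiber, $\mathcal Y^\chi$ is a deformation of the generic curve $Y$ with trivial section and with the same normal cone along that section as $\mathcal Y$, which is trivial by hypothesis; the preceding lemma (an equisingular deformation of a generic plane curve with trivial normal cone along its trivial section is generic) then shows that $\mathcal Y^\chi$ is generic.

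I expect the main obstacle to be twofold: proving that $\mathcal Y^\chi\to S$ is genuinely flat (the set-theoretic projection of a flat family is not automatically flat, so one must really exploit the constancy of the numerical invariants, and over a non-reduced base argue through a generic lifting), and verifying that the normal cone of $\mathcal Y^\chi$ along the trivial section remains trivial, which is precisely what licenses the application of the genericity lemma. This last point is where the contact nature of $\chi$ and the normalization $\chi(0,s)=(0,s)$ must be used substantively rather than formally.
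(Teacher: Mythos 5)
The paper states Theorem \ref{CONTACTEQUI-3} without proof (it is imported from \cite{EDLC}), so there is no in-paper argument to compare against; judged on its own terms, your proposal has the right architecture (fiberwise reduction, Theorem \ref{L:EQUIEQUI-3}, reassembly via Theorem \ref{T:RCONORMAL-3}, then the genericity lemma) but a genuine gap sits exactly where the content of the theorem lies. To apply Theorem \ref{L:EQUIEQUI-3} to $\chi_s$ and $\mathcal Con(\mathcal Y)_s$ you must know that \emph{both} $\mathcal Con(\mathcal Y)_s$ and $\chi_s(\mathcal Con(\mathcal Y)_s)$ are in generic position, and generic position of the image is precisely the genericity of the fibers of $\mathcal Y^\chi$ that the theorem asserts. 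You dispose of this with the parenthetical ``since both $\mathcal Y_s$ and $\mathcal Y^\chi_s$ are in generic position,'' which makes the argument circular: the fiberwise equisingularity, hence everything downstream (flatness via constancy of invariants, the identification $\Lambda=\mathcal Con(\mathcal Y^\chi)$, and the appeal to the genericity lemma), rests on a claim you have not established. A general contact transformation \emph{does} move Legendrian curves out of generic position --- that is why Theorem \ref{L:EQUIEQUI-3} carries the generic-position hypothesis --- so this step cannot be formal. The missing argument has to use Lemma \ref{GOODCURVE-3}: generic position is the open condition $C_\sigma(L)\not\supset(D\pi(\sigma))^{-1}(0,0)$ on the tangent cone of the conormal, the conditions (\ref{DIDENTITY3}) force $\chi$ into the form (\ref{abc3}) with $\alpha,\beta,\gamma\in\frak n_S=\frak m_X\frak m_S$ so that $D\chi$ is the identity along the section $\{0\}\times S$ modulo $\frak m_S$, and the hypothesis that the normal cone of $\mathcal Y$ along its trivial section is trivial guarantees that the tangent cones of the fibers of $\mathcal Con(\mathcal Y)$ do not degenerate as $s$ varies; together these keep $\chi(\mathcal Con(\mathcal Y))$ fiberwise in generic position. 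None of this appears in your write-up, and it is the only place the two structural hypotheses (trivial normal cone, normalization (\ref{DIDENTITY3})) actually do work.

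Two further points are flagged but not resolved. First, flatness of $\mathcal Y^\chi\to S$: over a non-reduced base the fiberwise language does not literally apply, and per the paper's conventions one must pass to a generic lifting of $\mathcal Y$ and transport the statement back; saying ``one must really exploit the constancy of the numerical invariants'' is a plan, not a proof. Second, the triviality of the normal cone of $\mathcal Y^\chi$ along its trivial section, needed to invoke the genericity lemma, is again deferred to the closing paragraph. A correct proof would establish the tangent-cone/normal-cone statements first (directly from the form (\ref{abc3}) of $\chi$), deduce generic position of the transformed fibers, and only then run the equisingularity and flatness arguments you outline.
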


\begin{definition}
Let $\mathcal{D}ef^{\,es,\mu}_{f}$ (or $\mathcal{D}ef^{\,es,\mu}_{Y}$)
be the category given in the following way:
the objects of $\mathcal{D}ef^{\,es,\mu}_{f}$
are the objects of $\overset{\twoheadrightarrow}{\mathcal{D}ef^{\,es}_{f}}$;
two objects $\mathcal Y,\mathcal Z$ of $\mathcal{D}ef^{\,es,\mu}_{f}(T)$
are \em isomorphic \em  if
there is a relative contact transformation $\chi$ over $ T $ such that $\mathcal Z=\mathcal Y^\chi$.
\end{definition}

\begin{lemma}\label{CONDUTOR}
Assume $f\in\mathbb C\{x,y\}$ is the defining function of a generic plane curve $Y$.
Let $L$ be the conormal of $Y$.
For each $\ell\ge 1$ there is $h_\ell\in \mathbb C\{x,y\}$ such that
\[
(\ell+1)p^\ell f_x+\ell p^{\ell+1}f_y\equiv h_\ell  \hbox{ mod } I_L.
\]
Moreover, $h_\ell$ is unique modulo $I_Y$.
\end{lemma}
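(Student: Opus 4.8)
The plan is to work on the normalization of $L$ and reduce everything to a single valuation estimate along the branch. Since $Y$ is a generic irreducible plane curve of multiplicity $k$ with one Puiseux pair, and coordinates have been chosen so that $C(Y)=\{y=0\}$, I can parametrize its branch by $x=t^{k}$, $y=t^{n}+\cdots$ with $k<n$ and $\gcd(k,n)=1$. Writing $v=\mathrm{ord}_{t}$, the conormal $L$ is then parametrized by $t\mapsto(t^{k},\,t^{n}+\cdots,\,p(t))$ with $p=-f_x/f_y=y'/x'$, which is holomorphic because $v(p)=n-k>0$. This exhibits the inclusions $\mathcal O_{Y}\subseteq\mathcal O_{L}\subseteq\mathbb C\{t\}$ coming from $\pi:L\to Y$ and the common normalization, so that proving the statement amounts to showing that the image of $(\ell+1)p^{\ell}f_x+\ell p^{\ell+1}f_y$ in $\mathbb C\{t\}$ actually lies in the subring $\mathcal O_{Y}$. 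The element $h_\ell$ is then any preimage in $\mathbb C\{x,y\}$, and it is unique modulo $I_{Y}$ because $\pi$ is dominant, so $\pi^{\ast}:\mathcal O_{Y}\hookrightarrow\mathcal O_{L}$ is injective.

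First I would simplify the expression on the branch. Since $p=-f_x/f_y$, the relation $f_x+pf_y\in I_{L}$ holds (it vanishes on $L$), so in $\mathbb C\{t\}$ one has $\ell p^{\ell+1}f_y=\ell p^{\ell}(pf_y)=-\ell p^{\ell}f_x$, and the whole combination telescopes to $p^{\ell}f_x$. Thus I only need to control $v(p^{\ell}f_x)=\ell(n-k)+v(f_x)$, and the problem is whether this order is large enough to force membership in $\mathcal O_{Y}$.

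The key step is the valuation count. Because $Y$ has one Puiseux pair with $\gcd(k,n)=1$, the Newton polygon of $f$ has a single compact edge joining $(0,k)$ and $(n,0)$, with no interior lattice point, and every monomial $x^{a}y^{b}$ of $f$ satisfies $ak+bn\ge kn$. Differentiating in $x$, each monomial of $f_x$ has $t$-order at least $k(n-1)$, with equality only for the term $nx^{n-1}$ coming from $x^{n}$; hence $v(f_x)=k(n-1)$ with no cancellation, and likewise $v(p)=n-k$ since $y'$ and $x'$ have orders $n-1$ and $k-1$ with nonvanishing leading coefficients. Therefore $v(p^{\ell}f_x)=\ell(n-k)+k(n-1)$, and a direct computation gives $\ell(n-k)+k(n-1)-(k-1)(n-1)=\ell(n-k)+(n-1)>0$. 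The order of the combination thus strictly exceeds the conductor exponent $(k-1)(n-1)$ of the semigroup $\langle k,n\rangle$, so it lies in the conductor ideal $t^{(k-1)(n-1)}\mathbb C\{t\}\subseteq\mathcal O_{Y}$, which is exactly what was needed to produce $h_\ell$.

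I expect the main obstacle to be the valuation identity $v(f_x)=k(n-1)$: this is the point where the genericity hypothesis is really used, through Lemma \ref{GOODCURVE-3} and the resulting normal form of the Newton polygon, to rule out any lower-order cancellation in $f_x$ (and, symmetrically, in $f_y$). Once these orders are pinned down the conductor comparison is immediate and in fact robust, and the uniqueness of $h_\ell$ modulo $I_{Y}$ follows formally from the injectivity of $\pi^{\ast}$.
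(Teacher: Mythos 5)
Your strategy---pass to the normalization of the branch, use $f_x+pf_y\in I_L$ to collapse $(\ell+1)p^\ell f_x+\ell p^{\ell+1}f_y$ to $p^\ell f_x$ modulo $I_L$, and then show that the $t$-order of $p^\ell f_x$ exceeds the conductor so that it lands in the conductor ideal $t^{(k-1)(n-1)}\mathbb C\{t\}\subseteq\mathcal O_Y$---is exactly the argument the paper relies on: the lemma is stated without proof (it is the statement quoted as Lemma $7.2$ of \cite{EDLC}), but the proofs given in the paper for its relative analogue and for Lemma \ref{LEMMAH} use precisely this parametrization-plus-conductor mechanism, and the lemma's very label is ``conductor''. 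For an irreducible $Y$ with semigroup $\langle k,n\rangle$ your valuation count is right: $v(p^\ell f_x)=\ell(n-k)+k(n-1)=(k-1)(n-1)+\ell(n-k)+(n-1)>(k-1)(n-1)$, and uniqueness modulo $I_Y$ does follow from injectivity of $\pi^{\ast}:\mathcal O_Y\to\mathcal O_L$.

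The one genuine issue is generality. The lemma is stated for an arbitrary generic plane curve $Y$ (irreducible tangent cone, but possibly reducible and of arbitrary equisingularity type), whereas your proof assumes from the first sentence that $Y$ is irreducible with a single Puiseux pair, and both the identity $v(f_x)=k(n-1)$ and the conductor value $(k-1)(n-1)$ are specific to that case. The method does extend---on each branch $\gamma_i$ one has the classical formula $\mathrm{ord}_{t_i}(f_x\circ\gamma_i)=c_i+\mathrm{ord}_{t_i}y_i'$, with $c_i$ the conductor exponent of $\mathcal O_Y$ in $\mathbb C\{t_i\}$ (which accounts for intersection numbers with the other branches), so $\mathrm{ord}_{t_i}(p^\ell f_x\circ\gamma_i)>c_i$ and the tuple of restrictions lies in the conductor ideal of the possibly reducible $\mathcal O_Y$---but that is the argument the stated lemma actually requires, and it is not in your write-up. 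Two smaller remarks: you only need the lower bound $v(f_x)\geq k(n-1)$, which is immediate monomial by monomial from the Newton polygon, so the concern about ``lower-order cancellation'' is moot (cancellation could only raise the order); and the genericity hypothesis of Lemma \ref{GOODCURVE-3} is really what guarantees that $p$ restricts to a holomorphic function on $L$ (generic position, $v(p)>0$, indeed $v(p)\geq k$), rather than what pins down $v(f_x)$.
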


\begin{definition}
Let $f$ be a generic plane curve with tangent cone $\{y=0\}$. 
We will denote by $I_f$ the ideal of $\mathbb C\{x,y\}$ generated by the functions $g$ 
such that $f+\varepsilon g$ is equisingular over $T_\ep$ and has trivial normal cone along its trivial section.
We call $I_f$ the \emph{equisingularity ideal of} $f$.

We will denote by $I^\mu_f$ the ideal of $\mathbb C\{x,y\}$ generated by 
$f,(x,y)f_x$, $(x^2,y)f_y$ and $h_\ell$, $\ell\ge 1$.
\end{definition}

\begin{theorem}[\cite{EDLC}]\label{LAST-3}
Assume $Y$ is a generic plane curve with conormal $L$, defined by a  power series $f$.
Assume $f$ is SQH or $f$ is NND.
If $g_1,...,g_n\in I_f$ represent a basis of $I_f/I^\mu_f$ with Newton order $\geq 1$ , 
the  deformation $\mathcal G$ defined by
\begin{equation}\label{VERSALEQUATION}
G(x,y,s_1,...,s_n)=f(x,y)+\sum_{i=1}^n s_ig_i
\end{equation}
is a semiuniversal deformation of  $f$ in $\mathcal Def^{es,\mu}_f$.
\end{theorem}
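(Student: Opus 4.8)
The plan is to verify the two defining conditions of semiuniversality for $\mathcal Def^{es,\mu}_f$ separately: that the Kodaira--Spencer map of $\G$ at the origin is a bijection onto the tangent space of the functor, and that $\G$ is complete. First I would compute the tangent space $T^1=\mathcal Def^{es,\mu}_f(T_\varepsilon)$. By the very definition of the equisingularity ideal, a first-order deformation $f+\varepsilon g$ that is equisingular over $T_\varepsilon$ with trivial normal cone along its trivial section is one with $g\in I_f$, so before quotienting the first-order deformations are parametrized by $I_f$. In $\mathcal Def^{es,\mu}_f$ two such deformations are identified when they differ by the operation $\mathcal Y\mapsto \mathcal Y^\chi$ of a relative contact transformation $\chi$ over $T_\varepsilon$, which stays inside generic equisingular deformations by Theorem \ref{CONTACTEQUI-3}. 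Differentiating this action at the identity, I would show that the infinitesimal admissible contact transformations act on $f$ exactly through $I^\mu_f$: rescaling $f$ contributes the generator $f$; the projectable vector fields preserving the tangent cone $\{y=0\}$ and the genericity condition $f\in(x^2,y)^k$ of Lemma \ref{GOODCURVE-3} contribute $(x,y)f_x$ and $(x^2,y)f_y$, the factor $x^2$ rather than $x$ in the $f_y$-part being forced by that genericity condition; and the non-projectable, $p$-direction transformations contribute, modulo $I_L$ and hence modulo $I_Y$, precisely the series $h_\ell$ of Lemma \ref{CONDUTOR}. This identifies the orbit tangent space with $I^\mu_f$ and yields $T^1\cong I_f/I^\mu_f$, so that the classes $[g_1],\dots,[g_n]$ form a basis and the Kodaira--Spencer map is bijective.

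To prove completeness I would use the hypothesis that $f$ is SQH or NND. This makes $I_f$, and the obstructions to lifting deformations, computable through the results of \cite{GP1}, and it supplies a quasihomogeneous $\C^*$-grading under which the requirement that the $g_i$ have Newton order $\geq 1$ places every infinitesimal class in strictly positive weight. I would then establish formal versality by lifting order by order: given a deformation over an Artinian base and a lift of its restriction along a small extension, the positive-weight structure forces the obstruction to extending it to lie in the span of the $g_i$ together with $I^\mu_f$, so a lift exists and the classifying morphism to $\C^n$ extends. Surjectivity of the Kodaira--Spencer map in each degree together with its bijectivity in degree one gives formal semiuniversality of (\ref{VERSALEQUATION}), and a convergence or Artin-approximation argument upgrades the formal family to the analytic deformation $\G$.

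The main obstacle is the exact identification, carried out inside the first step, of the image of the full Lie algebra of admissible infinitesimal relative contact transformations with the ideal $I^\mu_f$ --- in particular the assertion that the non-projectable part contributes nothing beyond the $h_\ell$ modulo $I_Y$. The difficulty is that such a transformation acts on the conormal $\mathcal Con\,\mathcal Y$ in $\mathbb P^*(\C^2|T_\varepsilon)$ rather than on $f$ directly, so one must transport its action back to $\C\{x,y\}$ through the conormal correspondence of Theorem \ref{T:RCONORMAL-3} and control it modulo $I_L$; this is exactly the content secured by Lemma \ref{CONDUTOR}. Once the generators of $I^\mu_f$ are pinned down so that $I_f/I^\mu_f$ is of the correct size, the remaining tangent computation and the SQH/NND-driven versality argument are comparatively routine.
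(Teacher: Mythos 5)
The paper offers no proof of this statement: it is quoted from \cite{EDLC}, so there is no in-paper argument to compare yours against, and I can only judge the proposal on its own terms. Structurally it is the expected argument, and the key identification you single out --- matching the generators of $I^\mu_f$ to unit rescaling of $f$, to tangent-cone- and genericity-preserving coordinate changes (whence $(x,y)f_x$ and $(x^2,y)f_y$, consistent with condition (\ref{CCOND-3}) of Theorem \ref{ALLCONTACT-3} and with Lemma \ref{GOODCURVE-3}), and to the $p$-dependent parts of $\chi_{\alpha,\beta_0}$ (whence, through Theorem \ref{T:RCKT3} and Lemma \ref{CONDUTOR}, the $h_\ell$) --- is precisely the mechanism this paper itself exploits in the reverse direction inside the proof of Theorem \ref{IMANIFOLD}. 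So the tangent-space half of your plan is sound in outline, provided you prove the inclusion in both directions: not only that each generator of $I^\mu_f$ is realized by an infinitesimal contact transformation, but that an arbitrary admissible $\alpha\in\mathfrak n_S$, mixing $x$, $y$ and $p$, contributes nothing outside that ideal; this is where the uniqueness modulo $I_Y$ in Lemma \ref{CONDUTOR} and the vanishing of $h_\ell$ for $\ell\ge k-1$ must enter.

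Two weaker points. The completeness half is the thinner one, and your citation of \cite{GP1} there is misplaced: that reference concerns geometric quotients of unipotent group actions and says nothing about obstructions to lifting deformations; the unobstructedness of the equisingular functor in the SQH/NND case should be drawn from \cite{GLS} (or from \cite{EDLC} itself), and the order-by-order lifting requires the relative versions of the first-order statements (Theorem \ref{T:RCKT3} over Artinian bases and the relative analogue of Lemma \ref{CONDUTOR}, as in the unnumbered lemma on $H^\gamma$ in Section \ref{RCG}), not just their $T_\varepsilon$ instances. Neither issue is a wrong turn, but both are places where the sketch leans on the quantitative content of \cite{EDLC} rather than supplying it.
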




\begin{lemma}
Let $S$ be the germ of a complex space. Assume $F$ defines an object $\mathcal{F}$ in  $\overset{\twoheadrightarrow}{\mathcal{D}ef^{\,es}_{f}}(S)$. Given $\gamma \geq 1$ there are $H^\gamma \in \mathcal{O}_S\{x,y\}$ such that
\[
H^\gamma \equiv  p^\gamma \partial _x F \qquad mod \; I_{Con(\mathcal{F})} + \Delta_F.
\]
If $f$ has multiplicity $k$, $H^\gamma \equiv 0$ for $\gamma \geq k-1$.

\end{lemma}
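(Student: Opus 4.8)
This is the relative counterpart of Lemma~\ref{CONDUTOR}, and the plan is to transport the argument of that lemma to the family $\mathcal F$ and then read off the vanishing from the multiplicity. Working in $\mathcal O_S\{x,y,p\}$, the ideal $I_{Con(\mathcal F)}$ contains $F$ together with the relative conormal relation $\partial_xF+p\,\partial_yF$, so $\partial_xF\equiv-p\,\partial_yF \bmod I_{Con(\mathcal F)}$. \emph{For the existence of $H^\gamma$} I would first produce a $p$-free representative already modulo $I_{Con(\mathcal F)}$ alone, which a fortiori gives the stated congruence. Since $\mathcal F$ is generic and equisingular, after trivializing the section its branch admits a relative parametrization $x=x(t,s)$, $y=y(t,s)$ with $x=t^k$ and $p=\partial_ty/\partial_tx$, and by Theorem~\ref{T:RCONORMAL-3} the ideal $I_{Con(\mathcal F)}$ is the kernel of the induced map $\mathcal O_S\{x,y,p\}\to\mathcal O_S\{t\}$. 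Equisingularity keeps the $t$-orders of $p$ and $\partial_xF$ constant along the family, equal to $n-k$ and $k(n-1)$ as for $y^k+x^n$; hence the image of $p^\gamma\partial_xF$ is a series all of whose exponents are $\ge\gamma(n-k)+k(n-1)\ge k(n-1)$, which exceeds the conductor $(k-1)(n-1)$ of the semigroup $\langle k,n\rangle$. Every such exponent therefore lies in $\langle k,n\rangle$, i.e. in the image of $\mathcal O_S\{x,y\}$, and any $H^\gamma\in\mathcal O_S\{x,y\}$ with the same image satisfies $H^\gamma\equiv p^\gamma\partial_xF\bmod I_{Con(\mathcal F)}$. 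This is precisely the mechanism behind Lemma~\ref{CONDUTOR}.

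\emph{For the vanishing} when $\gamma\ge k-1$ the summand $\Delta_F$ is indispensable: the representatives $H^\gamma$ are nonzero modulo $I_{Con(\mathcal F)}$ alone, their image being the nonzero term of order $\gamma(n-k)+k(n-1)$, so the vanishing must come from absorbing $H^\gamma$ into $\Delta_F$, the module of trivial coordinate-change directions. By the relative form of Lemma~\ref{GOODCURVE-3} the equisingularity of $\mathcal F$ yields $F\in(x^2,y)^k\mathcal O_S\{x,y\}$, hence $\partial_yF\in(x^2,y)^{k-1}$ and $\partial_xF\in(x,y)(x^2,y)^{k-1}$. I would then rewrite $p^\gamma\partial_xF\equiv-p^{\gamma+1}\partial_yF$ and show that, once $\gamma\ge k-1$, the order forced by $F\in(x^2,y)^k$ is high enough that $H^\gamma$ is realized by one of the generators of $\Delta_F$ applied to $F$; the threshold $\gamma=k-1$ is exactly the point at which the power of $p$ can be carried by a regular relative contact vector field fixing the section.

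The first part is routine once the relative parametrization is in place; the only subtlety is that $S$ may be non-reduced, which I would circumvent by passing to a generic lifting (as in the definition of a generic deformation and Theorem~\ref{CONTACTEQUI-3}), proving the congruence there, and descending it, the uniqueness of the representative modulo $I_{\mathcal Y}$ guaranteeing that the $H^\gamma$ are well defined over $S$. The genuine difficulty is the second part: matching the exact threshold $\gamma\ge k-1$ with membership in $I_{Con(\mathcal F)}+\Delta_F$ requires reconciling the order estimate coming from $F\in(x^2,y)^k$ with the precise list of generators of $\Delta_F$, and it is here that the hypothesis on the multiplicity $k$ is used in full. I expect this step to be the main obstacle.
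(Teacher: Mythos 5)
Your argument for the existence of $H^\gamma$ is essentially the one in the paper. The paper first invokes the relative version of Lemma~\ref{CONDUTOR} (Lemma 7.2 of \cite{EDLC}) to get $H^\gamma\equiv(\gamma+1)p^\gamma\partial_xF+\gamma p^{\gamma+1}\partial_yF \bmod I_{Con(\mathcal F)}$, justified precisely by the two facts you use — equisingularity keeps the multiplicity and the conductor constant, and each component admits a relative parametrization — and then collapses this to $p^\gamma\partial_xF$ via the chain-rule identity $\partial_xF+p\,\partial_yF\equiv 0$ on $Con(\mathcal F)$. You apply the same identity first and run the same conductor/parametrization mechanism directly on $p^\gamma\partial_xF$; the difference is cosmetic. (One caveat: your explicit orders $n-k$, $k(n-1)$, $(k-1)(n-1)$ presuppose an irreducible branch with semigroup $\langle k,n\rangle$, while the lemma is stated for a general $f$ of multiplicity $k$; the paper's proof is equally brief on this point, so I will not press it.)

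The genuine gap is the vanishing for $\gamma\geq k-1$, which you yourself flag as unresolved, and the route you sketch cannot close it. An order estimate alone does not put $H^{k-1}$ into $\Delta_F$: the $t$-order of $\Psi^*(p^{k-1}\partial_xF)$ is $(k-1)(n-k)+k(n-1)=2kn-k^2-n$, whereas the smallest $c$ with $t^c\mathcal O_S\{t\}\subset\Psi^*(\Delta_F)$ is controlled by $\operatorname{ord}\Psi^*(\partial_yF)+(k-1)(n-1)=2kn-k-2n+1$, and $2kn-k^2-n\geq 2kn-k-2n+1$ forces $n\geq k^2-k+1$ — already false in the paper's running example $k=6$, $n=13$. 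So neither the conductor bound nor $F\in(x^2,y)^k$ gives membership in $I_{Con(\mathcal F)}+\Delta_F$ by degree reasons. What actually happens is finer: $2kn-k^2-n=n(k-1)+k(n-k)$ is the order of $\Psi^*(x^{n-k}\partial_yF)$, and on the conormal one has $px\equiv\frac{n}{k}y+\cdots$, so for $\gamma\geq k-1$ the series $p^\gamma\partial_xF\equiv-p^{\gamma+1}\partial_yF$ can be eliminated against $\mathcal O_S\{x,y\}$-multiples of $\partial_xF$ and $\partial_yF$ rather than against monomials in $x,y$ (in the exact quasihomogeneous case this is the identity $p^{k-1}f_x\equiv-\frac{n}{k}\bigl(\frac{n}{k}\bigr)^{k-1}x^{n-k}f_y$, using $p\equiv\frac{n}{k}y/x$ and $f_y=ky^{k-1}$). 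Reconstructing that elimination — which is the content of the quoted Lemma 7.2 of \cite{EDLC} and of the threshold $\gamma=k-1$ — is exactly what is missing from your proposal.
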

\begin{proof}
Let us first show that
\[
H^\gamma \equiv  (\gamma+1)p^\gamma \partial _x F+ \gamma p^{\gamma + 1} \partial_y F \qquad mod \; I_{Con(\mathcal{F})}.
\]
This is a relative version of Lemma $7.2$ of \cite{EDLC}. Since $\F$ is equisingular, the multiplicity and the conductor are constant. Moreover, there are parametrizations of each component of $\F$. Therefore, we can generalize the argument in the proof of the quoted Lemma. 

Now it is enough to show that  
\begin{equation}\label{SECOND}
\partial_x F+p\partial_y F\equiv 0 \qquad mod \; I_{Con(\mathcal{F})}.
\end{equation}
Assume $\F$ is irreducible. Let $(t,s) \mapsto (X,Y,P)$ be a parametrization of $Con(\F)$. Since $F(X,Y)=0$ we conclude that
\[
\partial_x F \partial_t X + \partial_y F \partial_t Y=0.
\]
Since $P=\partial_t Y/\partial_t X$, (\ref{SECOND}) holds.
\end{proof}

Let $T_\varepsilon$ be the complex space with local ring $\mathbb C\{\varepsilon\}/(\varepsilon^2)$.
Let $I,J$ be ideals of the ring $\mathbb C\{s_1,...,s_m\}$. Assume $J\subset I$. 
Let $X,S,T$ be the germs of complex spaces with local rings $\mathbb C\{x,y,p\}$,
$\mathbb C\{s\}/I,\mathbb C\{s\}/J$. Consider the maps
$i:X\hookrightarrow X\times S$, $j: X\times S \hookrightarrow  X\times T$ and $q: X\times S \to S$.

Let $\frak m_X,\frak m_S$ be the maximal ideals of 
$\mathbb C \{x,y,p\}$, $\mathbb C \{s\}/I$.
Let $\frak n_S$ be the ideal of $\mathcal O_{X\times S}$
generated by $\frak m_X\frak m_S$.

Let $\chi: X\times S \to X\times S$ be a relative contact transformation.
If $\chi$ verifies 
\begin{equation}\label{DIDENTITY3}
\chi \circ i=i,  ~~ q \circ \chi=q ~ \hbox{ \rm and } ~ \chi(0,s)= (0,s) ~~
\hbox{ \rm for each } s.
\end{equation}
 there are 
$\alpha,\beta,\gamma\in \frak n_S$ such that
\begin{equation}\label{abc3}
\chi(x,y,p,s)=(x+\alpha,y+\beta,p+\gamma,s).
\end{equation}

\begin{theorem}\label{T:RCKT3}
$(1)$ Let $\chi : X\times S \to X \times S$ be a relative contact transformation that verifies \em (\ref{DIDENTITY3}). \em
Then $\gamma$ is determined by $\alpha$ and $\beta$. Moreover, there is $\beta_0 \in \mathfrak{n}_S+p\mathcal O_{X\times S}$ such that $\beta$ is the solution of the Cauchy problem
\begin{equation}\label{E:CAUCHY-3}
\left( 1+\frac{\partial \alpha}{\partial x}  + p\frac{\partial \alpha}{\partial y}\right)\frac{\partial \beta}{\partial p} - p\frac{\partial \alpha}{\partial p}\frac{\partial \beta}{\partial y} - \frac{\partial \alpha}{\partial p}\frac{\partial \beta}{\partial x}=p\frac{\partial \alpha}{\partial p},  
\end{equation}
$\beta+p\mathcal O_{X\times S} = \beta_0$.\\
$(2)$ Given $\alpha\in \mathfrak{n}_S$, $\beta_0 \in \mathfrak{n}_S+p\mathcal O_{X\times S}$, 
there is a unique relative contact transformation  $\chi$ that verifies \em (\ref{DIDENTITY3}) \em
and the conditions of statement $(a)$. 
We denote $\chi$ by $\chi_{\alpha,\beta_0}$.\\ 
$(3)$ If  $S=T_\varepsilon$ the Cauchy problem \emph{(\ref{E:CAUCHY-3})}  simplifies into 
\begin{equation}\label{E:CAUCHYTEBETA-3}
\frac{\partial \beta}{\partial p}=p\frac{\partial \alpha}{\partial p}, \qquad \beta +  p\mathcal O_{X\times T_\varepsilon} = \beta_0.
\end{equation}
\end{theorem}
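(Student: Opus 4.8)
The plan is to reduce everything to the single defining condition of a relative contact transformation, namely that $\chi^\ast(dy-p\,dx)$ be a unit multiple of $dy-p\,dx$, and then to read off the equations coefficient by coefficient. Writing $\chi$ in the form (\ref{abc3}) and abbreviating partial derivatives by subscripts, I would first compute
\[
\chi^\ast(dy-p\,dx)=d(y+\beta)-(p+\gamma)\,d(x+\alpha),
\]
expand $d\alpha,d\beta$ in the coframe $dx,dy,dp$, and collect terms. This writes $\chi^\ast(dy-p\,dx)$ as an explicit combination $P\,dx+Q\,dy+R\,dp$, where $P,Q,R\in\mathcal O_{X\times S}$ depend only on $\alpha,\beta,\gamma$ and their first derivatives.

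The contact condition is then equivalent to $R=0$, $Q=u$ and $P=-up$ for some unit $u$; equivalently, $R=0$ together with $P+pQ=0$. I would treat $Q$ as the definition of $u$, so the content lies in the vanishing of the $dp$-component $R=0$ and in $P+pQ=0$. The latter is linear in $\gamma$ with leading coefficient $1+\alpha_x+p\alpha_y$, which is a unit near the origin (since $\alpha\in\mathfrak n_S$ forces $\alpha_x,\alpha_y$ to lie in $\mathfrak m_S\mathcal O_{X\times S}$ and hence to vanish at the base point); solving it expresses $\gamma$ as an explicit rational function of $\alpha,\beta$, which proves that $\gamma$ is determined by $\alpha$ and $\beta$. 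Substituting this value of $\gamma$ into $R=0$ and clearing the unit denominator yields, after simplification, exactly (\ref{E:CAUCHY-3}); since that equation is first order in $p$, the only remaining freedom in $\beta$ is its restriction to $\{p=0\}$, which is the datum $\beta_0$. This settles part $(1)$.

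For part $(2)$ I would run the computation in reverse. Because the coefficient of $\partial\beta/\partial p$ is a unit, (\ref{E:CAUCHY-3}) can be put in Cauchy--Kovalevskaya normal form $\partial\beta/\partial p=F(x,y,p,s,\partial\beta/\partial x,\partial\beta/\partial y)$ with $F$ analytic and linear in the tangential derivatives; given $\alpha\in\mathfrak n_S$ and the initial datum $\beta_0$, the Cauchy--Kovalevskaya theorem (applied over the regular ring $\C\{s\}$ and then reduced modulo $I$, using uniqueness, to accommodate a possibly non-reduced $S$) produces a unique analytic $\beta$. I would then define $\gamma$ by the formula from part $(1)$ and set $\chi$ as in (\ref{abc3}). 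By construction $\chi^\ast(dy-p\,dx)=Q\,(dy-p\,dx)$ with $Q$ a unit (it equals $1$ at the origin), so $\chi$ is a relative contact transformation, and its Jacobian is the identity at the origin, so $\chi$ is an isomorphism. Uniqueness is immediate, since $\alpha$ and $\beta_0$ determine $\beta$ (uniqueness of the Cauchy problem) and then $\gamma$ (the formula). Part $(3)$ follows at once: for $S=T_\varepsilon$ one has $\mathfrak m_S^2=0$, so every product of two elements of $\mathfrak m_S\mathcal O_{X\times S}$ vanishes; since $\alpha,\beta\in\mathfrak n_S\subset\mathfrak m_S\mathcal O_{X\times S}$, all the cross terms $\alpha_x\beta_p,\;p\alpha_y\beta_p,\;p\alpha_p\beta_y,\;\alpha_p\beta_x$ in (\ref{E:CAUCHY-3}) drop out, leaving $\partial\beta/\partial p=p\,\partial\alpha/\partial p$, which is (\ref{E:CAUCHYTEBETA-3}).

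The main obstacle is the bookkeeping in part $(2)$ ensuring that the output lands in the correct ideals, so that $\chi$ genuinely verifies (\ref{DIDENTITY3}): one must check that the Cauchy--Kovalevskaya solution $\beta$ and the derived $\gamma$ both lie in $\mathfrak n_S$, i.e. that they vanish along the central fiber $\{s=0\}$ and along the trivial section $\{x=y=p=0\}$. Membership in $\mathfrak m_S\mathcal O_{X\times S}$ is handled by observing that both the equation and the datum are compatible with that ideal, so uniqueness forces $\beta\in\mathfrak m_S\mathcal O_{X\times S}$; the sharper vanishing on the trivial section must be extracted from the structure of the equation at $x=y=p=0$. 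The non-reducedness of $S$ is the other delicate point, which is precisely why I would phrase existence over $\C\{s\}$ and descend through the uniqueness clause.
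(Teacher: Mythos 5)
The paper states Theorem \ref{T:RCKT3} without proof --- it belongs to the material on relative contact geometry recalled in Section \ref{RCG} from \cite{EDLC} --- so there is no in-paper argument to compare yours against. Your computation is correct and is the expected one: writing $\chi^*(dy-p\,dx)=P\,dx+Q\,dy+R\,dp$, the combination $P+pQ=0$ gives $(p+\gamma)\left(1+\frac{\partial\alpha}{\partial x}+p\frac{\partial\alpha}{\partial y}\right)=p+\frac{\partial\beta}{\partial x}+p\frac{\partial\beta}{\partial y}$, which determines $\gamma$ since the left factor is a unit, and substituting this into $R=\frac{\partial\beta}{\partial p}-(p+\gamma)\frac{\partial\alpha}{\partial p}=0$ yields exactly (\ref{E:CAUCHY-3}); the existence and uniqueness in $(2)$ via the Cauchy--Kovalevskaya theorem (with the verification that $\beta,\gamma\in\mathfrak n_S$, which follows from reducing the equation modulo $\mathfrak m_S$ and from evaluating the initial datum at $x=y=p=0$), and the degeneration in $(3)$ from $\mathfrak m_{T_\varepsilon}^2=0$, are as you describe.
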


Consider the contact transformations from $\mathbb C^3$ to $\mathbb C^3$ given by
\begin{equation}\label{SEMIS-3}
\Phi(x,y,p)=(\lambda x,\lambda \mu y, \mu p),~~~ \lambda,\mu\in\mathbb C^*,
\end{equation}
\begin{equation}\label{PARABOLOIDAL3}
\Phi(x,y,p)=(ax+bp,y+\frac{ac}{2}x^2+\frac{bd}{2}p^2+bcxp, cx+dp),
~~~\begin{vmatrix}
a & b\\
c & d
\end{vmatrix} 
=1,
\end{equation}

\begin{theorem}\label{ALLCONTACT-3}\em (See \cite{AO} or \cite{Martins}.) \em
Let $\Phi:(\mathbb C^3,0)\to (\mathbb C^3,0)$ the the germ of a contact transformation. Then  $\Phi=\Phi_1\Phi_2\Phi_3$, where $\Phi_1$ is of  type $(\ref{SEMIS-3})$, $\Phi_2$ is of type $(\ref{PARABOLOIDAL3})$ and $\Phi_3$ is of type $(\ref{abc3})$, with 
$\alpha, \beta,\gamma\in\mathbb C\{x,y,p\}$. Moreover, there is $\beta_0\in \mathbb C\{x,y\}$ such that
$\beta$ verifies the Cauchy problem $(\ref{E:CAUCHY-3})$, $\beta-\beta_0\in (p)$   and 
\begin{equation}\label{CCOND-3}
\alpha,\beta,\gamma,\beta_0,\frac{\partial \alpha}{\partial x}, 
\frac{\partial \beta_0}{\partial x}, \frac{\partial \beta}{\partial p},
\frac{\partial^2 \beta}{\partial x\partial p} \in (x,y,p).
\end{equation}
If $D\Phi(0)(\{y=p=0\})=\{ y=p=0 \}$, $\Phi_2=id_{\mathbf C^3}$.
\end{theorem}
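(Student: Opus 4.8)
The plan is to peel off the linear part of $\Phi$ using the explicit families \eqref{SEMIS-3} and \eqref{PARABOLOIDAL3}, and then to recognise the residual factor as a transformation of type \eqref{abc3} by expanding the contact equation. Write $\theta=dy-p\,dx$ and $A=D\Phi(0)$, and record first what the contact condition $\Phi^{\ast}\theta=u\,\theta$ (with $u$ a unit, $u(0)=\kappa$) forces on $A$. Since $\Phi(0)=0$ and $p$ vanishes at the origin, evaluation at $0$ gives $A^{\ast}(dy)=\kappa\,dy$; hence the linear part of the $y$-component of $\Phi$ is exactly $\kappa\,y$, so $A$ carries no $x$ or $p$ into the $y$-row and preserves the contact plane $\xi_0=\{dy=0\}=\operatorname{span}(\partial_x,\partial_p)$, inducing there a map $\bar A\in GL_2(\C)$. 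Comparing $\Phi^{\ast}d\theta$ with $u\,d\theta$ at the origin (the term $du\wedge\theta$ dies on $\xi_0$) yields $\det\bar A=\kappa$, so $\bar A$ is conformally symplectic with conformal factor equal to the Reeb scaling $\kappa$.

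Next I would realise $\bar A$ by the normal forms. The scaling \eqref{SEMIS-3} induces $\operatorname{diag}(\lambda,\mu)$ on $(x,p)$ with Reeb factor $\lambda\mu$, and a direct check, like the one showing \eqref{PARABOLOIDAL3} is a contact transformation, shows the paraboloidal map induces $\left(\begin{smallmatrix}a&b\\c&d\end{smallmatrix}\right)\in SL_2(\C)$ with Reeb factor $1$. Using the factorisation $GL_2(\C)=\{\operatorname{diag}(\lambda,\mu)\}\cdot SL_2(\C)$, I choose $\Phi_1$ of type \eqref{SEMIS-3} and $\Phi_2$ of type \eqref{PARABOLOIDAL3} so that $\Phi_1\Phi_2$ induces $\bar A$ on $\xi_0$ and has the same Reeb factor $\kappa=\lambda\mu=\det\bar A$ as $\Phi$; this is possible since $\det(\operatorname{diag}(\lambda,\mu)^{-1}\bar A)=1$ once $\lambda\mu=\kappa$. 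Setting $\Phi_3:=(\Phi_1\Phi_2)^{-1}\Phi$ produces a contact transformation fixing the origin whose differential induces the identity on $\xi_0$ and has Reeb factor $1$. Writing $\Phi_3(x,y,p)=(x+\alpha,y+\beta,p+\gamma)$ gives $\alpha,\beta,\gamma\in(x,y,p)$ and $\beta_0:=\beta|_{p=0}\in(x,y,p)$; the identity on $\xi_0$ forces $\partial_x\alpha(0)=0$, and $A^{\ast}dy=dy$ for $\Phi_3$ forces $\partial_x\beta(0)=\partial_p\beta(0)=0$, i.e. $\partial_x\beta_0,\partial_p\beta\in(x,y,p)$, which are the first-order conditions listed in \eqref{CCOND-3}.

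It remains to produce $\gamma$ and the Cauchy problem. Here I would expand $\Phi_3^{\ast}\theta=u\,\theta$ just as in the computation underlying Theorem \ref{T:RCKT3}, the only change being that $S$ is a point and $\alpha,\beta,\gamma$ are honest elements of $\C\{x,y,p\}$ rather than of $\mathfrak n_S$. The first-order terms of this identity express $\gamma$ as a function of $\alpha,\beta$ and show that $\beta$ solves the Cauchy problem \eqref{E:CAUCHY-3} with initial value $\beta_0\in\C\{x,y\}$, so $\beta-\beta_0\in(p)$; reading \eqref{E:CAUCHY-3} one sees $\partial_p\beta\in(p)$, whence $\partial^2_{xp}\beta\in(x,y,p)$, the remaining condition of \eqref{CCOND-3}. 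For the final assertion, suppose $A(\{y=p=0\})=\{y=p=0\}$; then $\bar A$ fixes the line $\C\partial_x\subset\xi_0$ and is therefore upper triangular. In this case I take $\Phi_2=\operatorname{id}$ and let $\Phi_1$ realise only the diagonal of $\bar A$; then $\Phi_3=\Phi_1^{-1}\Phi$ induces an upper unitriangular map on $\xi_0$ whose sole nonzero off-diagonal entry is $\partial_p\alpha(0)$, and $\partial_p\alpha$ is \emph{not} among the constraints of \eqref{CCOND-3}. Hence the same argument gives $\Phi_3$ of type \eqref{abc3} satisfying \eqref{CCOND-3} and \eqref{E:CAUCHY-3} with $\Phi_2=\operatorname{id}$.

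I expect the main obstacle to be the penultimate step: carrying out the expansion of the contact equation for $\Phi_3$ to extract \eqref{E:CAUCHY-3} and, in particular, checking the second-order membership $\partial^2_{xp}\beta\in(x,y,p)$. The surrounding steps are linear algebra in $GL_2(\C)$ together with the direct verifications that \eqref{SEMIS-3} and \eqref{PARABOLOIDAL3} induce the asserted conformal-symplectic maps.
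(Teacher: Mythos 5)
The paper does not actually prove Theorem \ref{ALLCONTACT-3}: it is quoted from \cite{AN} and \cite{Martins}, so there is no internal proof to compare yours against line by line. Your reconstruction follows the same overall scheme as those references --- peel off the linear part of $\Phi$ by a scaling of type (\ref{SEMIS-3}) and a paraboloidal transformation of type (\ref{PARABOLOIDAL3}), then identify the residual factor as a transformation of type (\ref{abc3}) governed by the Cauchy problem (\ref{E:CAUCHY-3}) --- and the structural steps are all correct: $A^{\ast}dy=\kappa\,dy$ and the preservation of $\xi_0$, the identity $\det\bar A=\kappa$ from $\Phi^{\ast}d\theta$, the factorisation of $\bar A$ through $\mathrm{diag}(\lambda,\mu)\cdot SL_2(\C)$, and the derivation of (\ref{E:CAUCHY-3}) by comparing the $dx$, $dy$, $dp$ coefficients of $\Phi_3^{\ast}\theta=u\theta$ (the same computation as for Theorem \ref{T:RCKT3} with $S$ a point).

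One step is misstated, although the conclusion you need survives. The $dp$-coefficient of $\Phi_3^{\ast}\theta=u\theta$ gives $\partial_p\beta=(p+\gamma)\,\partial_p\alpha$, and this is \emph{not} in $(p)$ in general: reducing (\ref{E:CAUCHY-3}) modulo $(p)$ leaves $(1+\partial_x\alpha)\,\partial_p\beta\equiv\partial_p\alpha\,\partial_x\beta$, which has no reason to vanish. What you actually need is only $\partial^2_{xp}\beta(0)=0$, and that follows from $\partial_x\partial_p\beta=\partial_x\gamma\cdot\partial_p\alpha+(p+\gamma)\,\partial^2_{xp}\alpha$ evaluated at the origin, using $\gamma(0)=0$ and $\partial_x\gamma(0)=0$ (the latter because $D\Phi_3(0)\partial_x=\partial_x$ in both of your cases). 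This point is worth writing out carefully, because it is exactly what makes your final clause work: when $D\Phi(0)$ preserves $\{y=p=0\}$ you let $\Phi_3$ absorb the unipotent part of $\bar A$, so $\partial_p\alpha(0)$ may be nonzero, and then $\partial^2_{xp}\beta\in(x,y,p)$ cannot be deduced from $\partial_p\alpha\in(x,y,p)$ but only from the vanishing of $\gamma$ and $\partial_x\gamma$ at the origin. With that repair the argument is complete.
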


\begin{proposition}\label{P:IBS}
Let $f$ and $g$ be two microlocally equivalent SQH or NND generic plane curves. Then, $f$ and $g$ have equisingular semiuniversal microlocal deformations with isomorphic base spaces.
\end{proposition}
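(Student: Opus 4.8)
The plan is to promote the microlocal equivalence between $f$ and $g$ to an equivalence of the deformation categories $\mathcal{D}ef^{\,es,\mu}_{f}$ and $\mathcal{D}ef^{\,es,\mu}_{g}$, and then to read off the statement from the uniqueness of the semiuniversal object. By hypothesis the conormals $L_f$ and $L_g$ of $f$ and $g$ are isomorphic Legendrian germs, so there is a germ of contact transformation $\Phi:(\C^3,0)\to(\C^3,0)$ with $\Phi(L_f)=L_g$; by Theorem \ref{L:EQUIEQUI-3} the plane projections $Y_f=\pi(L_f)$ and $Y_g=\pi(L_g)$ are then equisingular. For a germ of complex space $T$, I would define a functor $F:\mathcal{D}ef^{\,es,\mu}_{f}\to\mathcal{D}ef^{\,es,\mu}_{g}$ by
\[
F(\mathcal Y)=\pi\big((\Phi\times id_T)(\mathcal Con\,\mathcal Y)\big)
\]
on objects and by $\chi\mapsto(\Phi\times id_T)\circ\chi\circ(\Phi\times id_T)^{-1}$ on the morphisms, which are the relative contact transformations over $T$ realizing $\mathcal Z=\mathcal Y^\chi$.

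The functoriality is the easy half. Conjugation by $\Phi\times id_T$ sends a relative contact transformation over $T$ to a relative contact transformation over $T$, and it preserves the normalization (\ref{DIDENTITY3}): a morphism $\chi$ restricts to the identity on the central fiber $\C^3$, so its conjugate restricts to $\Phi\circ id\circ\Phi^{-1}=id$ there and likewise fixes the trivial section. Running the same recipe with $\Phi^{-1}$ produces a quasi-inverse, so $F$ is an equivalence of categories provided it lands in $\mathcal{D}ef^{\,es,\mu}_{g}$ on objects. The latter is the assertion that $\mathcal Z:=\pi\big((\Phi\times id_T)(\mathcal Con\,\mathcal Y)\big)$ is a generic equisingular deformation of $Y_g$: granting this, Theorem \ref{T:RCONORMAL-3} identifies $(\Phi\times id_T)(\mathcal Con\,\mathcal Y)$ with $\mathcal Con\,\mathcal Z$, so $F(\mathcal Y)$ is indeed an object of $\mathcal{D}ef^{\,es,\mu}_{g}(T)$.

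Establishing this genericity-and-equisingularity statement is the main obstacle, and it is a cross-curve analogue of Theorem \ref{CONTACTEQUI-3}: that theorem gives exactly this conclusion, but only for a $\chi$ fixing the central fiber, whereas $\Phi\times id_T$ does not fix it when $\Phi\neq id$. I would circumvent this by decomposing $\Phi=\Phi_1\Phi_2\Phi_3$ through Theorem \ref{ALLCONTACT-3} and treating the three types separately. The scaling $\Phi_1$ of type (\ref{SEMIS-3}) is a point transformation and preserves genericity and the equisingular type outright; the near-identity factor $\Phi_3$ of type (\ref{abc3}) is controlled by the Cauchy problem (\ref{E:CAUCHY-3}) and is handled just as in the proof of Theorem \ref{CONTACTEQUI-3}; the genuinely new case is the paraboloidal factor $\Phi_2$ of type (\ref{PARABOLOIDAL3}), where I would verify preservation of genericity fibrewise, combining Theorem \ref{L:EQUIEQUI-3} with the criterion that an equisingular deformation of a generic curve having trivial normal cone along its trivial section is itself generic.

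Finally, I would conclude by invoking Theorem \ref{LAST-3}, which (since $f$ and $g$ are SQH or NND) provides semiuniversal deformations in $\mathcal{D}ef^{\,es,\mu}_{f}$ and $\mathcal{D}ef^{\,es,\mu}_{g}$ with base spaces $(\C^{n},0)$ and $(\C^{m},0)$, where $n=\dim_{\C}I_f/I^\mu_f$ and $m=\dim_{\C}I_g/I^\mu_g$. An equivalence of deformation categories takes a semiuniversal object to a semiuniversal object, so $F$ carries the semiuniversal deformation of $f$ to a semiuniversal deformation of $g$; uniqueness of the semiuniversal object then forces it to be isomorphic to the standard one for $g$, whence $n=m$ and the two base spaces are isomorphic.
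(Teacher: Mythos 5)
Your overall architecture (produce a correspondence between the two deformation theories, then invoke uniqueness of the semiuniversal object) is the same as the paper's, and your final step is identical. But the route through the middle is genuinely different, and the difference matters. You push the whole family forward: $F(\mathcal Y)=\pi\bigl((\Phi\times id_T)(\mathcal Con\,\mathcal Y)\bigr)$, which forces you to prove that the transformed family is again a \emph{generic equisingular} deformation of $\pi(Con\,g)$ --- the ``main obstacle'' you name. The paper never transforms the family at all. It takes the semiuniversal object $\mathcal X=(i,\Phi):X\hookrightarrow \C^3\times\C^\ell\to\C^\ell$ for $X=Con\,f$ and simply precomposes the immersion of the central fiber with the contact transformation $\chi$ carrying $Con\,g$ to $Con\,f$, obtaining $(i\circ\chi,\Phi)$: the total space, the projection to the base, and hence all the genericity and equisingularity conditions of Definition \ref{DEFLEGDEF-3} are literally unchanged, since those conditions constrain the family $\mathcal L\hookrightarrow\C^3\times S\to S$ and not the identification of its special fiber. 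Versality is then checked directly: any deformation $(j,\Psi)$ of $Y$ becomes a deformation $(j\circ\chi^{-1},\Psi)$ of $X$, is induced from $\mathcal X$ by some $\varphi$, and composing back with $\chi$ shows $(j,\Psi)\cong\varphi^*(i\circ\chi,\Phi)$.

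The obstacle you isolate is therefore self-inflicted, and in your write-up it is not actually closed. Theorem \ref{CONTACTEQUI-3} applies only to relative contact transformations satisfying (\ref{DIDENTITY3}) (fixing the central fiber and the trivial section), which $\Phi\times id_T$ does not; your plan to decompose $\Phi=\Phi_1\Phi_2\Phi_3$ via Theorem \ref{ALLCONTACT-3} and ``verify preservation of genericity fibrewise'' for the paraboloidal factor is only a sketch --- for a non-reduced or singular base $T$ ``fibrewise'' is not even meaningful without passing to a lifting, and one must also check that the transformed family still carries a trivial section with trivial normal cone before the genericity lemma can be applied. None of this is fatal, and I believe the argument can be completed, but it is substantial extra work that the paper's precomposition trick renders unnecessary. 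If you adopt the paper's viewpoint --- a deformation of a Legendrian curve is a family together with an identification of its special fiber, so isomorphic special fibers share the same families --- your functor becomes the identity on total spaces and the proposition follows in a few lines.
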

\begin{proof}
Let $X,Y$ denote the germs of analytic subsets at the origin of $\C^3$ defined by $Con\, f$ and $Con\, g$ respectively. Let $\chi:\C^3 \to \C^3$ be a contact transformation such that $\chi(Y)=X$ and  $\mathcal X:=(i,\Phi):X \hookrightarrow \C^3\times \C^\ell \to \C^\ell$ be a semiuniversal equisingular deformation of $X$ (to see that such an object exists see Theorem \ref{LAST-3}). Let us show that $(i \circ \chi, \Phi)$ is a semiuniversal equisingular deformation of $Y$:

Let $\mathcal Y:=(j,\Psi):Y \hookrightarrow \C^3\times \C^k \to \C^k$ be an equisingular deformation of $Y$. Because $\mathcal X$ is versal there is $\varphi: \C^k \to \C^\ell$ such that $\varphi^\ast \mathcal X \cong (j \circ \chi^{-1}, \Psi$).
\begin{equation}\label{E:DIAG1-3}
\xymatrix{
Y   \ar@{_{(}->}[d]^{j}  &X \ar[l]_{\chi^{-1}} \ar@{_{(}->}[d]^{\varphi^\ast i}\\
\C^3\times \C^k  \ar[d]^\Psi \ar@{}[r]|*=0[@]{\cong} &\C^3\times \C^k \ar[d]^{\varphi^\ast \Phi} \\
\C^k   \ar@{}[r]|*=0[@]{\cong} &\C^k 
}
\end{equation}
Then, $(\varphi^\ast i \circ \chi,\varphi^\ast \Phi) \cong (j,\Psi)$ which means that $\varphi^\ast (i \circ \chi,\Phi) \cong \mathcal Y$. The result follows from the fact that a semiuniversal deformation is unique up to isomorphism (see Lemma $II.1.12$ of \cite{GLS}).
\end{proof}
Recall that, for a SQH or NND generic plane curve $f$, there is a semiuniversal microlocal equisingular deformation with base space $\C^k$, where $k$ is the the dimension as vector space over $\C$ of $I_f/I^\mu_f$. So, because of Proposition \ref{P:IBS} and Proposition $II.2.17$ of \cite{GLS}, the following defines an invariant between microlocally equivalent fibers of $F$. 

\begin{definition}
Let $f$ be a  SQH or NND generic plane curve. Then 
\[
\widehat{\tau}(f):=dim_\C\, \frac{\C\{x,y\}}{I^\mu_f}
\]
is the \emph{microlocal Tjurina} number of $f$.
\end{definition}



\section{The microlocal Kodaira-Spencer map}\label{MKSM}
Assume $k,n$ are coprime integers, $0<2k<n$. Set $f=y^k-x^n$, $\mu=(n-2)(k-2)$. Consider in $\C[x,y]$ the grading given by $o(x^i y^j)=ki+nj$, $(i,j) \in \mathbb{N}^2$. Set $\omega=o(x^{n-2}y^{k-2})-kn$, $\varpi=o(x^{n-k}y^{k-2})-kn$, $e(x^i y^j)=(i,j) \in \mathbb{N}^2$,
\begin{align*}
B&=\{(i,j) \in \mathbb{N}^2: i\leq n-2,\; j \leq k-2\},\\
C&=\{(i,j) \in B: i+j \leq n-2\},\\
D&=\{(i,j) \in B: o(x^i y^j)-kn \leq \varpi\},\\
A_0&=\{(i,j) \in A: ki+nj > kn \}, \text{for each} \; A \subseteq B.
\end{align*}

Let $m_1,\ldots,m_\mu$ be the family $x^i y^j$, $(i,j) \in B$, ordered by degree. Set $b=\# B_0$. If $\mu-b+1 \leq \ell \leq \mu$, set $o(\ell)=o(m_\ell)-kn$ and
$o(s_{o(\ell)})=-o(\ell)$.

Let $A \subseteq B$. Set $I_A=\{\ell:e(m_\ell) \in A_0\}$, $s_A=\left(s_{o(\ell)}\right)_{\ell \in I_A}$. Set $\C^A=\C^{\#A_0}$ with coordinates $s_A$. Notice that $I_B=\{\mu-b+1,\ldots,\mu\}$.  Moreover,
\[
F_A=f+\sum_{\ell \in I_A} s_{o(\ell)} m_\ell
\]
is homogeneous of degree $kn$.

Let $Y$ be the plane curve defined by $f$. Let $\Gamma$ be the conormal of $Y$. Let $\F_A$ be the deformation defined by $F_A$. Notice that
\begin{itemize}
\item $\F_B$ is a semiuniversal equisingular deformation of $Y$,
\item $\F_C$ is a semiuniversal equisingular microlocal deformation of $Y$,
\item if $C \subseteq A \subseteq B$, $\F_A$ is a complete equisingular microlocal deformation of $Y$.
\end{itemize}

Let $\Delta_{F_A}$ be the ideal of $\C[s_A]$ generated by $\partial_x F_A$ and $\partial_y F_A$. Assume $o(p)=n-k$ in order to guarantee that the contact form $dy-pdx$ is homogeneous.

\begin{lemma}\label{LEMMAH}
Assume $C\subseteq A \subseteq B$ and $\gamma \geq 1$. There is $H^\gamma_A \in \C[s_A]\{x,y\}$ such that $H^\gamma_A \equiv p^\gamma \partial_x F_A \; mod \; I_{Con(\mathcal{F}_A)} + \Delta_{F_A}$ where $H^\gamma_A$ is homogeneous of degree $\gamma(n-k) +kn-k$. If $\gamma \geq k-1$, $H^\gamma_A \in \Delta_{F_A}$. If $C \subseteq A'  \subseteq A  \subseteq B$, $H^\gamma_{A'}=H^\gamma_{A}|_{\C^{A'}}$.

\end{lemma}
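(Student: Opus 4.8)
The plan is to reduce the statement to the single deformation $\F_B$ and to obtain the general case by restriction. Since $\F_B$ is a semiuniversal equisingular deformation of $Y$, the earlier lemma that produces, for any equisingular deformation, a $p$-free $H^\gamma$ with $H^\gamma\equiv p^\gamma\partial_x F\pmod{I_{Con}+\Delta_F}$ applies to $\F_B$ over $\C^B$ and yields $H^\gamma\in\C\{s_B\}\{x,y\}$ with
\[
H^\gamma\equiv p^\gamma\partial_x F_B\pmod{I_{Con(\F_B)}+\Delta_{F_B}},
\]
together with $p^\gamma\partial_x F_B\in I_{Con(\F_B)}+\Delta_{F_B}$ for $\gamma\ge k-1$. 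I will build $H^\gamma_B$ from this and then simply \emph{define} $H^\gamma_A:=H^\gamma_B|_{\C^A}$ for $C\subseteq A\subseteq B$, so that the compatibility $H^\gamma_{A'}=H^\gamma_A|_{\C^{A'}}$ is automatic by transitivity of restriction.

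The grading is the main tool. With $o(p)=n-k$ the generators $F_B$ and $\partial_x F_B+p\,\partial_y F_B$ of $I_{Con(\F_B)}$ are homogeneous (of degrees $kn$ and $kn-k$), $\Delta_{F_B}$ is generated by homogeneous elements, and $p^\gamma\partial_x F_B$ is homogeneous of degree $d:=\gamma(n-k)+kn-k$. To make the representative polynomial in $s_B$ I would pass to the relative Milnor algebra $M=\C\{s_B\}\{x,y\}/\Delta_{F_B}$: reducing $H^\gamma$ there does not affect the congruence, and I take the homogeneous component of degree $d$ of its class. Because $\F_B$ is equisingular, hence $\mu$-constant, $M$ is a free $\C\{s_B\}$-module on the monomials $x^iy^j$, $(i,j)\in B$, which lift the basis of $\C\{x,y\}/(x^{n-1},y^{k-1})$ by graded Nakayama. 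Since each $s_{o(\ell)}$ has \emph{negative} weight $kn-o(m_\ell)$, every graded piece $M_d$ is finite-dimensional over $\C$; thus the degree-$d$ component is a finite combination of monomials $s_B^c x^iy^j$ and lifts to $H^\gamma_B\in\C[s_B]\{x,y\}$, homogeneous of degree $d$ and still satisfying the required congruence.

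For $\gamma\ge k-1$ the vanishing $p^\gamma\partial_x F_B\in I_{Con(\F_B)}+\Delta_{F_B}$ forces $H^\gamma_B$, which is $p$-free, into the $p$-free part of that ideal. Using that $Con(\F_B)$ is cut out by $F_B$ and $\partial_x F_B+p\,\partial_y F_B$, the substitution $p=0$ maps $I_{Con(\F_B)}+\Delta_{F_B}$ into $(F_B,\partial_x F_B,\partial_y F_B)$; the weighted Euler identity $kn\,F_B=k\,x\,\partial_x F_B+n\,y\,\partial_y F_B$ shows $F_B\in\Delta_{F_B}$, so this $p$-free part is exactly $\Delta_{F_B}$. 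Hence $H^\gamma_B\in\Delta_{F_B}$, and therefore $H^\gamma_A\in\Delta_{F_A}$. The per-$A$ congruence is finally obtained by specializing: the map $\C^A\hookrightarrow\C^B$ sending $s_\ell\mapsto 0$ for $\ell\in I_B\setminus I_A$ carries $F_B$ to $F_A$, sends $I_{Con(\F_B)}$ and $\Delta_{F_B}$ into $I_{Con(\F_A)}$ and $\Delta_{F_A}$, and carries $H^\gamma_B$ to $H^\gamma_A$.

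The hard part is the polynomial dependence on $s_B$. The earlier lemma only guarantees a convergent series in $s_B$, and since the deformation parameters carry negative weight, fixing the homogeneous degree $d$ does not a priori bound the $s$-degree. Everything hinges on the freeness of the relative Milnor algebra over the base — equivalently on $\mu$-constancy of the equisingular family — which makes the graded pieces $M_d$ finite-dimensional; proving this freeness and that the monomials indexed by $B$ form a $\C\{s_B\}$-basis is the essential technical step. A secondary subtlety is the precise generation of $I_{Con(\F_B)}$ used in the $p=0$ argument, which I would justify from the standard description of the conormal of a plane curve.
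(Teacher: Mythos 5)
Your overall strategy --- upgrade the abstract $H^\gamma$ supplied by the earlier relative lemma to a homogeneous polynomial representative by reducing in $R_B=\C[s_B][[x,y]]/\Delta F_B$ and taking a graded component, then \emph{define} $H^\gamma_A:=H^\gamma_B|_{\C^A}$ so that compatibility is automatic --- is genuinely different from the paper's. The paper constructs $H^\gamma_A$ directly: it writes down the homogeneous parametrization $\Psi$ of $Con(\F_A)$ induced by $X=t^k$, $Y=\sum_i\psi_it^{n+i}$, and kills the lowest $t$-order term of $\Psi^*(p^\gamma\partial_xF_A)$ monomial by monomial until the remainder lands in the conductor ideal $t^c\C[s_A]\{t\}\subset\Phi^*(\Delta_{F_A})$; homogeneity, polynomiality in $s_A$ and compatibility with restriction are read off from that construction. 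Your reduction to $B$ and the specialization step (restriction sends $I_{Con(\F_B)}$ into $I_{Con(\F_A)}$ and $\Delta_{F_B}$ into $\Delta_{F_A}$) are fine.

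The gap is your description of $I_{Con(\F_B)}$. The ideal $(F_B,\partial_xF_B+p\,\partial_yF_B)$ does \emph{not} cut out the conormal: its zero set contains the entire fiber of $\pi$ over the relative singular locus (where $\partial_xF_B=\partial_yF_B=0$ the second equation is automatic), while $Con(\F_B)$ meets that fiber in finitely many points. Concretely, already at $s_B=0$ the conormal of $y^k+x^n$ is parametrized by $\bigl(t^k,\theta t^n,\tfrac nk\theta t^{n-k}\bigr)$, so $kxp-ny\in I_{Con(\F_B)}|_{s_B=0}$, and this element is not in $(F,\partial_xF+p\,\partial_yF)$ even up to radical. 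This breaks both uses of the alleged generators. For homogeneity you only need that $I_{Con(\F_B)}+\Delta_{F_B}$ is graded, which is true because $Con(\F_B)$ is invariant under the $\C^*$-action --- repairable, but not by the argument you give (the paper proves exactly this invariance via the homogeneous parametrization). The $\gamma\ge k-1$ step is not repairable as written: setting $p=0$ sends $kxp-ny+\cdots$ to $-ny+\cdots$, which does not lie in $(F_B,\partial_xF_B,\partial_yF_B)$ for $k\ge3$, so the substitution $p=0$ does not map $I_{Con(\F_B)}+\Delta_{F_B}$ into $\Delta_{F_B}$, and the assertion that the $p$-free part of $I_{Con(\F_B)}+\Delta_{F_B}$ equals $\Delta_{F_B}$ --- which is precisely the nontrivial content of the claim $H^\gamma_B\in\Delta_{F_B}$ --- is left unproved. (You also defer, rather than prove, the freeness of $R_B$ on the monomials indexed by $B$.) In the paper both of these points are carried by the conductor and $t$-order bookkeeping on the parametrization side.
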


\begin{proof}
Set $\psi_0=\theta$, where $\theta^k=-1$. There are $\psi_i \in (s_A)\C[s_A]$, $i \geq 1$, such that 
\[
X(t,s_A)=t^k,\qquad Y(t,s_A)= \sum_{i \geq 0} \psi_i t^{n+i}
\]
defines a parametrization $\Phi$ of $\F_A$. Setting $P(t,s_A)= \sum_{i \geq 0} \frac{n+i}{k}\psi_i t^{n-k+i}$, $X,Y,P$ defines a parametrization $\Psi$ of $Con(\mathcal{F}_A)$. Since $x$ is homogeneous of degree $k$ and $x=t^k$, we assume $t$ homogeneous of degree $1$. Let us show that $Y$ is homogeneous of degree $n$. The $\C^\ast$-action acts on $\Phi$ by
\[
a\cdot \Phi(t,s_A)=\left(a^kt^k, a^n\left( \theta t^n+\sum_{i \geq 1} (a \cdot \psi_i)a^it^{n+i}\right)\right).
\]
Since $F_A$ is homogeneous, for each $s_A$,
\[
t \mapsto \Phi_a(t,s_A)= \left(t^k, \theta t^n+\sum_{i \geq 1} (a \cdot \psi_i)a^it^{n+i}\right)
\]
is another parametrization of the curve defined by $(x,y) \mapsto F_A(x,y,s_A)$. Since the first term of both parametrizations coincide, $\Phi_a=\Phi$, $a \cdot \psi_i=a^{-i}\psi_i$ and $\Phi$ is homogeneous. Therefore, $\Psi$ is homogeneous.

There is an integer $c$ such that $\Phi^\ast(\Delta_{F_A}) \supset t^c \C[s_A]\{t\}$. Remark that $p^\gamma \partial_x F_A$ is homogeneous of degree $\gamma(n-k) +kn-k$. We construct $H^\gamma_A$ in the following manner. There is a monomial $ax^i y^j$, $a \in \C[s_A]$ such that the monomials of lowest $t$-order $\Phi^\ast(ax^i y^j)$ and $\Psi^\ast(p^\gamma \partial_x F_A)$ coincide. Replace $p^\gamma \partial_x F_A$ by $p^\gamma \partial_x F_A-ax^i y^j$ and iterate the procedure. After a finite number of steps we construct $H^\gamma_A$ such that
\[
\Psi^\ast(p^\gamma \partial_x F_A-H^\gamma_A) \in t^c \C[s_A]\{t\}.
\]
Therefore,
\[
p^\gamma \partial_x F_A-H^\gamma_A \in I_{Con(\mathcal{F}_A)} + \Delta_{F_A}.
\]
Remark that the monomial $ax^i y^j$ is homogeneous of degree $\gamma(n-k) +kn-k$.
\end{proof}

Set $\Theta_B= Der_\C \C[s_B]$, $\partial_{o(\ell)}=\partial_{s_{o(\ell)}}$ and $o(\partial_{o(\ell)})=o(\ell)$ for each $\ell \in I_B$. Assume $C \subseteq A' \subseteq A \subseteq B$. Let $\Theta_{A,A'}$ be the $\C[s_A]$-submodule of $\Theta_B$ generated by $\partial_{o(\ell)}$, $\ell \in I_{A'}$. Set $\Theta_A=\Theta_{A,A}$. There are maps
\[
\Theta_A \hookleftarrow \Theta_{A,A'} \overset{r_{A,A'}}{\longrightarrow} \Theta_{A'},
\]
where $r_{A,A'}$ is the restriction to $\C^{A'}$.

\begin{definition}
Let $I^\mu_F$ be the ideal of $\C[s_B][[x,y]]$ generated by $F_B$, $\Delta F_B$ and $H^\gamma_B$, $\gamma=1,\ldots,k-2$. We say that the map
\[
\rho: \Theta_B \to \C[s_B][[x,y]]/I^\mu_F,
\]
given by $\rho(\delta)=\delta F_B + I^\mu_F$ is the \emph{microlocal Kodaira-Spencer map} of $f$. We will denote the kernel of $\rho$ by $\EL_B$.
\end{definition}

Assume we have defined $\EL_A$. We set 
\[
\EL_{A,A'}=\EL_A \cap \Theta_{A,A'} \;\text{and}\; \EL_{A'}=r_{A,A'}(\EL_{A,A'}).
\] 
Let $L$ be a  Lie subalgebra of $\Theta_A$. Consider in $\C^A$ the binary relation $\sim$ given by $p \sim q$ if there is a vector field $\delta$ of $L$ and an integral curve $\gamma$ of $\delta$ such that $p$ and $q$ are in the trajectory of $\gamma$. We denote by $L$ the equivalence relation generated by $\sim$. We say that a subset $M$ of $\C^A$ is an integral manifold of $L$ if $M$ is an equivalence class of $L$.

Assume $C \subseteq A \subseteq B$. The family $m_\ell$, $1\leq \ell \leq \mu$, defines a basis of the $\C[s_A]$-module 
\[
R_A=\C[s_A][[x,y]]/\Delta F_A.
\]
Set $H^0_A=F_A$. The relations
\begin{equation}\label{CIJGA}
m_\ell H^\gamma_A \equiv \sum_{\upsilon=1}^\mu c_{\ell,\upsilon}^\gamma m_\upsilon \qquad mod \; \Delta F_A
\end{equation}
define $c_{\ell,\upsilon}^\gamma \in \C[s_A]$ for each $0 \leq \gamma \leq k-2$, $1 \leq \ell,\upsilon \leq \mu$. Assume $A=B$ and set
\begin{equation}\label{DEFDEL}
\delta_\ell^\gamma = \sum_{\upsilon=\mu-b+1}^\mu c_{\ell,\upsilon}^\gamma \partial_{s_{o(\upsilon)}}, \qquad \ell=1,\ldots,\mu,\; \gamma=0,\ldots, k-2.
\end{equation}
If $m_\ell=x^i y^j$ we will also denote $\delta_\ell^\gamma$ by $\delta_{i,j}^\gamma$. For $1 \leq \gamma \leq k-2$, set
\begin{align*}
&\alpha^0_\ell=o(m_\ell),\quad \alpha^\gamma_\ell= \alpha^0_\ell+\gamma(n-k)-k, \qquad &&\ell=1,\ldots,\mu,\\
&\alpha^0_{i,j}=o(x^i y^j), \quad \alpha^\gamma_{i,j}=\alpha^0_{i,j}+\gamma(n-k)-k, \qquad &&(i,j) \in B.
\end{align*}

\begin{lemma}\label{FIRST}
With the previous notations, we have that:
\begin{enumerate}
\item The vector fields $\delta^\gamma_\ell\; (\delta_{i,j}^\gamma)$ are homogeneous of degree $\alpha^\gamma_\ell\; (\alpha^\gamma_{i,j})$, $0 \leq \gamma \leq k-2$, $1 \leq \ell \leq \mu\; ((i,j) \in B)$.
\item $\delta_{i,j}^\gamma(0)\neq 0$ if and only if $\gamma \geq 1$, $i \leq \gamma-1$, $\gamma + j \leq k-2$.
\item
$\delta_{i,j}^\gamma=0$ if $\alpha^\gamma_{i,j} > \omega$.
\item The Lie algebra $\EL_B$ is generated as $\C[s_B]$-module by $\{ \delta^\gamma_\ell: 0 \leq \gamma \leq k-2, \; \alpha^\gamma_\ell \leq \omega \}$.
\item If $\sigma > \varpi$, $\partial_{s_\sigma} \in \EL_B$.
\item If $(u,v) \in B \setminus C$ there is $\delta \in \EL_B$ such that $\delta=\partial_{s_\sigma}+\ep$ is homogeneous of degree $\sigma=ku+nv-kn$, where $\ep$ is a linear combination of $\partial_{s_{o(i)}}$, $i \in I_B$, $i > \sigma$, with coefficients in $\C[s_B]$.
\end{enumerate}
\end{lemma}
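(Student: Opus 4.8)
The plan is to reduce statement $(6)$ to producing a single homogeneous element of $\EL_B$ of degree $\sigma$ whose value at the origin is nonzero, and then to read off its leading term from the grading. First I would record two elementary facts about the box $B$. Since $(k,n)=1$ and every $(i,j)\in B$ has $0\le i\le n-2<n$, an equality $ki_1+nj_1=ki_2+nj_2$ forces $n\mid(i_1-i_2)$, hence $i_1=i_2$ and $j_1=j_2$; thus $o$ is injective on the monomials indexed by $B$, and in particular there is a \emph{unique} coordinate $s_\sigma$ (of degree $-\sigma$), namely the one attached to $x^uy^v$. Next, writing $B\setminus C=\{(i,j)\in B:\ i+j\ge n-1\}$, the constraint $i\le n-2$ forces $v\ge1$; and from $u\ge n-1-v$ one gets $ku+nv-kn\ge -k+(n-k)v\ge n-2k>0$ since $n>2k$. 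Hence $(u,v)\in B_0$, the number $\sigma=ku+nv-kn$ is positive, and $s_\sigma$ is genuinely a deformation coordinate.

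Then I claim the explicit generator $\delta:=\delta^{\,v}_{u+v-n+1,\,0}$ does the job. Since $0\le u+v-n+1\le k-3$ and $v\in\{1,\dots,k-2\}$, the pair $(u+v-n+1,0)$ lies in $B$ and $\delta$ is one of the fields defined in (\ref{DEFDEL}). By part $(1)$ it is homogeneous of degree
\[
\alpha^{v}_{u+v-n+1,0}=k(u+v-n+1)+v(n-k)-k=ku+nv-kn=\sigma ,
\]
and $\sigma=ku+nv-kn\le k(n-2)+n(k-2)-kn=\omega$; hence by part $(4)$ the field $\delta$ is one of the module generators of $\EL_B$, so $\delta\in\EL_B$. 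Finally the triple $(i,j,\gamma)=(u+v-n+1,0,v)$ satisfies $\gamma\ge1$, $i=u+v-n+1\le v-1$ (equivalently $u\le n-2$) and $\gamma+j=v\le k-2$, so part $(2)$ gives $\delta(0)\ne0$.

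To finish I would extract the leading term by a degree count. Write $\delta=\sum_{\upsilon\in I_B}c_\upsilon\,\partial_{s_{o(\upsilon)}}$ with each $c_\upsilon\in\C[s_B]$ homogeneous of degree $\sigma-o(\upsilon)$. Since every coordinate $s_{o(\upsilon)}$ has negative degree, a nonzero $c_\upsilon$ forces $o(\upsilon)\ge\sigma$; it is a nonzero constant precisely when $o(\upsilon)=\sigma$, and it lies in $(s_B)$ when $o(\upsilon)>\sigma$. By the uniqueness noted above, the only index with $o(\upsilon)=\sigma$ is the one attached to $x^uy^v$, so $\delta(0)$ is a scalar multiple of $\partial_{s_\sigma}$, and it is nonzero. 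Rescaling $\delta$ by that scalar yields $\partial_{s_\sigma}+\epsilon$ with $\epsilon=\sum_{o(\upsilon)>\sigma}c'_\upsilon\,\partial_{s_{o(\upsilon)}}$ and $c'_\upsilon\in(s_B)$, which is exactly the asserted form. The only genuine decision in the argument is the choice of the generator $\delta^{\,v}_{u+v-n+1,0}$; once the inequalities of part $(2)$ are arranged, the homogeneity bookkeeping together with the uniqueness of $s_\sigma$ makes the leading-term statement automatic, so that is the step I would expect to require the most care.
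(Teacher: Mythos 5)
Your argument for statement $(6)$ is correct, and it is actually more explicit than the paper, whose proof of this lemma says nothing about $(6)$ (it only proves $(3)$ and $(4)$). The choice $\delta=\delta^{v}_{u+v-n+1,0}$ does satisfy the three conditions of $(2)$; its degree is $k(u+v-n+1)+v(n-k)-k=ku+nv-kn=\sigma\le\omega$; and the injectivity of $o$ on $B$ (from $(k,n)=1$ and $0\le i\le n-2<n$) together with the fact that every nonzero homogeneous element of $\C[s_B]$ has degree $\le 0$ (each $s_{o(\ell)}$, $\ell\in I_B$, has negative degree) forces $\delta(0)$ to be a nonzero multiple of $\partial_{s_\sigma}$ and the remaining coefficients to sit on indices $o(\upsilon)>\sigma$ with coefficients in $(s_B)$. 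This is a clean way to fill in a detail the authors leave implicit.

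The genuine gap is one of scope: the statement is the whole six-part lemma, and you prove only part $(6)$, taking parts $(1)$, $(2)$ and $(4)$ as inputs. Nothing in your text establishes $(1)$ (homogeneity of the $\delta^\gamma_\ell$, which rests on the homogeneity of $H^\gamma_B$ from Lemma \ref{LEMMAH} and of the reduction mod $\Delta F_B$), $(2)$ (which requires identifying $H^\gamma_B(0)$ with a nonzero multiple of $x^{n-1-\gamma}y^\gamma$ modulo $\Delta f$ --- and your construction depends essentially on this), $(3)$, $(5)$, or, most importantly, $(4)$. Part $(4)$ is where the paper concentrates its effort: one must show not only that each $\delta^\gamma_\ell$ lies in $\ker\rho$ (immediate, since $\delta^\gamma_\ell F_B\equiv m_\ell H^\gamma_B$ mod $\Delta F_B$ and $H^\gamma_B\in I^\mu_F$), but conversely that any $\delta\in\Theta_B$ with $\delta F_B\in I^\mu_F$ is a $\C[s_B]$-combination of the $\delta^\gamma_\ell$, which the paper does by expanding the multipliers $M_0,\dots,M_{k-2}$ in the basis $m_1,\dots,m_\mu$ of $R_B$. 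Since your proof of $(6)$ quotes $(4)$ to conclude $\delta\in\EL_B$, the omission is not merely cosmetic: without $(4)$, or at least the easy inclusion $\delta^\gamma_\ell\in\EL_B$, your argument does not get off the ground. Within its declared scope the proposal is sound; as a proof of the lemma it is incomplete.
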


\begin{proof}
(3): Just notice that if $\alpha^0_{i,j} > \omega=o(m_\mu)-kn$ then $o(m_{i,j} F_B) > o(m_\mu)$. Now, because $n >2k$, $o(H^\gamma_B)> kn=o(F_B)$ for any $\gamma=1,\ldots,k-2$, the result holds for $\gamma >0$.

(4): For $\gamma=0$ ($1 \leq \gamma \leq k-2$) and each $\ell =1,\ldots, \mu$ such that $o(m_\ell) \leq \omega$, we have that $\rho(\delta_\ell^\gamma)= \delta_\ell^\gamma F_B + I^\mu_F=m_\ell F_B+I^\mu_F \;(m_\ell H_B^\gamma+I^\mu_F)=0+I^\mu_F$. So, $\{ \delta^\gamma_\ell: 0 \leq \gamma \leq k-2, \; \alpha^\gamma_\ell \leq \omega \} \subset \EL_B$.

Now, let 
\[
\delta=\sum_{\upsilon=\mu-b+1}^{\mu} w_\upsilon \partial_{s_{o(\upsilon)}} \in \Theta_B
\]
such that $\rho(\delta)=0$. Then
\[
\delta F_B=\sum_{\upsilon=\mu-b+1}^{\mu} w_\upsilon m_{\upsilon}= M_0F_B + M_1H_B^1+\ldots+M_{k-2} H_B^{k-2} \; mod \; \Delta F_B,
\]
with $M_0,\ldots,M_{k-2} \in \C[s_B][[x,y]]$. Suppose 
\begin{align*}
M_0 =&\sum_{\ell=1}^{\mu} M_{0,\ell}m_\ell \; mod \; \Delta F_B,\\
&\cdots\\
M_{k-2}=&\sum_{\ell=1}^{\mu} M_{k-2,\ell}m_\ell \; mod \; \Delta F_B,
\end{align*}
where the $M_{\gamma,\ell} \in \C[s_B]$ for each $\ell=1,\ldots,\mu,\, \gamma=0,\ldots,k-2$. Then
\begin{align*}
M_0F_B=&M_{0,1}m_1F_B+\ldots+M_{0,\mu}m_\mu F_B \; mod \; \Delta F_B\\
=&M_{0,1}m_1F_B+\ldots+M_{0,b}m_b F_B \; mod \; \Delta F_B\\
=&M_{0,1}\delta_1^0 F_B + \ldots + M_{0,b} \delta_b^0 F_B \; mod \; \Delta F_B.
\end{align*}
Similarly, for any $\gamma=1,\ldots,k-2$
\begin{align*}
M_\gamma H_B^\gamma=&M_{\gamma,1}m_1H_B^\gamma+\ldots+M_{\gamma,b}m_b H_B^\gamma \; mod \; \Delta F_B\\
=&M_{\gamma,1}\delta_1^\gamma F_B + \ldots + M_{\gamma,b} \delta_b^\gamma F_B \; mod \; \Delta F_B.
\end{align*}
So,
\[
\delta F_B=\sum_{\gamma=0}^{k-2}\sum_{\ell=1}^b M_{\gamma,\ell}\delta_\ell^\gamma F_B \; mod \; \Delta F_B,
\]
which means that
\[
\delta=\sum_{\gamma=0}^{k-2}\sum_{\ell=1}^b M_{\gamma,\ell}\delta_\ell^\gamma.
\]
\end{proof}

Let $L_B$ be the Lie algebra generated by $\delta^\gamma_\ell$, $\gamma=0,\ldots,k-2$, $\ell=1,\ldots,b$. Remark that $\C^B/\EL_B \cong \C^B/L_B$.
Consider a matrix  with lines given by the coefficients of the vector fields $\delta^\gamma_\ell$, $\gamma=0,\ldots,k-2$, $\ell=1,\ldots,b$. After performing Gaussian diagonalization we can assume that:
\begin{itemize}
\item For each $\sigma \in I_B \setminus I_C$ there is a line corresponding to a vector field $\partial_{s_{o(\sigma)}}+\ep$, where $\ep \in \Theta_{B,C}$.
\item The remaining lines correspond to vector fields $\delta'_\ell$, $\ell \in J$, of $\Theta_{B,C}$.
\end{itemize}

The vector fields  $\delta'_\ell$, $\ell \in J$, generate $\EL_{B,C}$ as a $\C[s_B]$-module. Let $\delta_\ell$ be the restriction of $\delta'_\ell$ to $\C^C$ for each $\ell \in J$. The vector fields $ \delta_\ell,\; \ell \in J$,  generate $\EL_C$ as $\C[s_C]$-module. Note that $\{ \delta_\ell,\; \ell \in J\}$ is in general not uniquely determined but the $\C[s_C]$-module generated by them is. Let $L_C$ be the Lie algebra generated by  $\{\delta_\ell,\; \ell \in J\}$. Since $L_C \subseteq L_B$ the inclusion map $\C^C \hookrightarrow \C^B$ defines a map $\C^C/L_C \to \C^B/L_B$. By statement $(6)$ of Lemma \ref{FIRST}, this map is surjective. 

Assume there  is a vector field $\delta_\ell$, $\ell \in J$, of order $\alpha$. Let $\{\delta^{\alpha,i}:i \in I_\alpha\}$ be the set of vector fields $\delta_\ell$, $\ell \in J$, of order $\alpha$, with $I_\alpha=\{1,\ldots, \#I_\alpha\}$. If there is $\ell_0$ such that  $\delta^{\alpha,j}(s_\ell)=0$ for $\ell \leq \ell_0$ and $\delta^{\alpha,i}(s_{\ell_0})\neq0$, we assume that $i<j$. If $I_\alpha=\{1\}$, set $\delta^\alpha=\delta^{\alpha,1}$.

\begin{remark}\label{R:R3.3}
If $k=7, n=15$, we have that a semiuniversal equisingular microlocal deformation of $f$ given by
\begin{align*}
F_C&=y^7+x^{15}+s_2x^{11}y^2+s_3x^9y^3+s_4x^7y^4+s_5x^5y^5+s_{10}x^{10}y^3+s_{11}x^8y^4\\
&+s_{12}x^6y^5+s_{18}x^9y^4+s_{19}x^7y^5+s_{26}x^8y^5.
\end{align*}
Notice that the vector fields $\delta^0_{0,1}$ and $\delta^1_{2,0}$ give origin to the linearly independent vector fields
\[
\delta^{15,1}= 3s_3\partial_{s_{18}}+4s_4\partial_{s_{19}}+\cdots.
\]
and
\[
\delta^{15,2}=\left(\frac{7^2}{15}\left(\frac{4}{7}\psi_2^2-3\left(\frac{15}{7}\right)^2s_4\right)-4s_4\right)\partial_{s_{19}}+\cdots.
\]
\end{remark}

\begin{theorem}\label{BIJECTION}
The map $\C^C/L_C \to \C^B/L_B$ is bijective.
\end{theorem}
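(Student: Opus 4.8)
The plan is to establish injectivity (surjectivity having already been recorded, via statement $(6)$ of Lemma \ref{FIRST}), and to do so by constructing a holomorphic retraction $\Pi:\C^B\to\C^C$ that carries every $L_B$-integral manifold into a single $L_C$-integral manifold. Such a $\Pi$ induces a map $\Pi_*:\C^B/L_B\to\C^C/L_C$ on the leaf spaces; since $\Pi|_{\C^C}=\mathrm{id}$ one obtains $\Pi_*\circ j_*=\mathrm{id}$, where $j_*:\C^C/L_C\to\C^B/L_B$ denotes the map of the statement (induced by the inclusion $\C^C\hookrightarrow\C^B$). Hence $j_*$ is injective, and together with its surjectivity it is bijective.

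To build $\Pi$ I would exploit the normal form produced by the Gaussian diagonalization preceding the theorem. Writing $N=I_B\setminus I_C$, for each $\sigma\in N$ there is a field $\xi_\sigma=\partial_{s_{o(\sigma)}}+\ep_\sigma\in\EL_B$ with $\ep_\sigma\in\Theta_{B,C}$, while the remaining generators $\delta'_\ell$, $\ell\in J$, already lie in $\Theta_{B,C}$. Because the coefficient of $\partial_{s_{o(\sigma)}}$ in $\xi_\sigma$ is exactly $1$ and $\ep_\sigma$ involves no $N$-direction, the flow of $\xi_\sigma$ changes $s_{o(\sigma)}$ at unit rate, leaves every other coordinate $s_{o(\sigma')}$, $\sigma'\in N$, fixed, and otherwise moves only the $\C^C$-coordinates. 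Flowing the $\xi_\sigma$ successively therefore zeroes out the coordinates indexed by $N$ in a triangular fashion, and the resulting composite of flows defines a holomorphic $\Pi:\C^B\to\C^C$ with $\Pi|_{\C^C}=\mathrm{id}$.

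The heart of the argument is that $\Pi$ descends to the leaf spaces, and three structural facts drive this. First, each $\delta'_\ell\in\Theta_{B,C}$ is tangent to every slice obtained by fixing the $N$-coordinates and restricts on $\C^C$ to the generator $\delta_\ell$ of $\EL_C$; thus $\EL_B=\EL_{B,C}+\sum_{\sigma\in N}\C[s_B]\,\xi_\sigma$, the $N$-directions inside any $L_B$-leaf $\Lambda$ coming only from the $\xi_\sigma$. Consequently $\Lambda$ submerses onto $\C^N$ under the projection forgetting the $\C^C$-coordinates, and its fiber over $0$ is $\Lambda\cap\C^C$, an integral manifold of $\EL_{B,C}|_{\C^C}=\EL_C$. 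Second, since $\EL_B$ is a Lie algebra (being the kernel of the microlocal Kodaira--Spencer map) and each $\xi_\sigma$ has constant unit $N$-component, every bracket $[\xi_\sigma,\xi_{\sigma'}]$ and $[\xi_\sigma,\delta'_\ell]$ loses its $N$-component and so lies in $\EL_B\cap\Theta_{B,C}=\EL_{B,C}$. This is precisely what controls the non-commutativity of the elimination flows: reordering them, or sliding a trajectory of a generator of $\EL_B$ through $\Pi$, alters the endpoint only by the flow of a field in $\EL_{B,C}$, which on $\C^C$ is a field of $\EL_C$.

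The main obstacle is this last point: a priori $\Lambda\cap\C^C$ could meet several $\EL_C$-leaves, and one must show it is a single equivalence class. I expect to settle it through the bracket computation above, which shows that the discrepancy between any two elimination routes from a point of $\Lambda$ down to $\C^C$ is realized by an $\EL_{B,C}$-flow, hence by an $\EL_C$-flow after restriction; connectedness of the leaf $\Lambda$ then forces $\Pi(\Lambda)$ to lie in one $\EL_C$-leaf. Granting this, $\Pi_*$ is well defined and $\Pi_*\circ j_*=\mathrm{id}$, so $j_*$ is injective, and with the surjectivity already in hand the map $\C^C/L_C\to\C^B/L_B$ is bijective.
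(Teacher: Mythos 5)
Your argument is correct and is in substance the paper's own: both proofs eliminate the coordinates indexed by $I_B\setminus I_C$ by flowing along the normal-form fields $\partial_{s_{o(\sigma)}}+\ep$ produced by the Gaussian diagonalization, the key point in each case being that these flows preserve the part of the Lie algebra lying in $\Theta_{B,C}$ (your bracket computation is exactly the paper's ``leaving invariant $\EL_{C_p}$''). The only difference is packaging — the paper rectifies one coordinate at a time by a homogeneous affine isomorphism (using homogeneity to guarantee the flows are polynomial, a point worth making explicit in your version so that $\Pi$ is globally defined), whereas you compose all the flows into a single retraction.
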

\begin{proof}
Let $I_p$ be the subset of $I_B$ that contains $I_C$ and the $p$ smallest elements of $I_B \setminus I_C$. Set $C_p=\{(i,j) \in B : ki+nj-kn \in I_p\}$. The Lie algebra $L_{C_p}=\EL_{C_p} \cup L_B$ generates $\EL_{C_p}$ as $\C[s_{C_p}]$-module. There is $p$ such that $C_p=D$. By statement $(5)$ of Lemma \ref{FIRST} the integral manifolds of $L_B$ are of the type $M \times \C^{B\setminus D}$, where $M$ is an integral manifold of $L_{C_p}$. Therefore, $\C^D/L_D \cong \C^B/L_B$. Assume $\C^{C_{p+1}}/L_{C_{p+1}} \cong \C^B/L_B$ and $I_{C_{p+1}} \setminus {C_{p}}=\{\sigma\}$. The Lie algebra $\EL_{C_{p+1}}$ is generated by $\EL_{C_{p}}$ and a vector field $\partial_{s_{o(\sigma)}}+\ep$, where $\ep \in \EL_{C_{p+1},C}$. Consider the flow of $\partial_{s_{o(\sigma)}}+\ep$  with initial condition at a point of $\C^{C_p}$. We can use this flow to construct an homogeneous affine isomorphism of $\C^{C_{p+1}}$ into itself that equals the identity on $\C^{C_p}$ and rectifies $\partial_{s_{o(\sigma)}}+\ep$, leaving invariant $\EL_{C_p}$. Hence, $\C^{C_{p}}/L_{C_{p}} \cong \C^{C_{p+1}}/L_{C_{p+1}}$.

\end{proof}

\begin{remark}\label{SIMPLIFIED}
 Let  us denote by $P(s_C)$ the restriction of $P \in \C[s_B][[x,y]]$ to $\C^C$. Then, $F_B(s_C)=F_C$, $\Delta F_B(s_C)=\Delta F_C$ and  $H^\gamma_B(s_C)=H^\gamma_C$ for each $\gamma=1, \ldots,k-2$.  Let $\{\delta_{\ell,\mu},\; \ell \in J\} \subset Der_\C\, \C[s_C]$ be the set of vector fields obtained if we proceed as in the definition of $\{\delta'_{\ell},\; \ell \in J\}$, now with $C$ in the place of $B$. Then $<\{\delta_{\ell,\mu}\}>$=$<\{\delta_{\ell}\}>$ as $\C[s_C]$-modules. To see this just notice that, if
\begin{align*}
m_iF_B=&\sum_{j=1}^\mu c^0_{i,j}m_j \; mod \, \Delta F_B\\
m_iH_B^\gamma=&\sum_{j=1}^\mu c^\gamma_{i,j}m_j \; mod \, \Delta F_B
\end{align*}
then
\begin{align*}
m_iF_B(s_C)=&\sum_{j=1}^\mu c^0_{i,j}(s_C)m_j \; mod \, \Delta F(s_C)\\
m_iH_B^\gamma(s_C)=&\sum_{j=1}^\mu c^\gamma_{i,j}(s_C)m_j \; mod \, \Delta F(s_C).
\end{align*}

\end{remark}



\section{Geometric Quotients of Unipotent Group Actions}\label{GQ}

An affine algebraic group is said to be \emph{unipotent} if it is isomorphic to a group of upper triangular matrices of the form $Id+\ep$, where $\ep$ is nilpotent. If $\mathcal G$ is unipotent its Lie algebra $L$ is nilpotent and the map $exp:L \to \mathcal G$ is algebraic. Given a nilpotent Lie algebra $L$, there is a unipotent group $\mathcal G=exp\; L$ such that $L$ is the Lie algebra of $\mathcal G$.

Let $A$ be a Noetherian $\C$-algebra. A linear map $D:A \to A$ is a derivation of $A$ if $D(fg)=fD(g)+gD(f)$. A derivation $D$ of $A$ is nilpotent if for each $f \in A$ there is $n$ such that $D^n(f)=0$. Let $Der^{nil}(A)$ denote the Lie algebra of nilpotent derivations of $A$. Here, we set $A=\C[s_C]$.

Let $\mathcal G$ be an algebraic group acting algebraically on an algebraic variety $X$. If $Y$ is an algebraic variety and $\pi:X \to Y$ a morphism then $\pi$ is called a \emph{geometric quotient}, if
\begin{enumerate}
\item $\pi$ is surjective and open,
\item $(\pi_\ast \mathcal{O}_X)^\mathcal G=\mathcal{O}_Y$,
\item $\pi$ is a orbit map, i.e. the fibres of $\pi$ are orbits of $\mathcal G$.
\end{enumerate}
If a geometric quotient exists it is uniquely determined and we just say that $X/\mathcal G$ exists. Here, $\mathcal G$ will act on each strata of $\C^c=Spec\, A$ through the action of $\mathcal G$ on each fiber of $G$. On Theorem \ref{IMANIFOLD} we prove that $\C^c/\L_c$ is a classifying space for germs of Legendrian curves with generic plane projection $\{y^k+x^n=0\}$. The integral manifolds of ${L_C}$ are the orbits of the action of $\mathcal G_0:=exp\, {L_C}$. Set $L:=[{L_C},{L_C}]$ and $\mathcal G=exp\, L$. Note that $L$ is nilpotent ($\mathcal G$ unipotent) and ${L_C}/L \cong \C\delta_0$, where $\delta_0$ is the Euler field. 

\begin{definition}\label{D:D1.1}
Let $\mathcal G$ be a unipotent algebraic group, $Z=Spec\, A$ an affine $\mathcal G$-variety and $X \subseteq Z$ open and $\mathcal G$-stable. Let $\pi: X \to Y :=Spec \, A^\mathcal G$ be the canonical map. A point $x \in X$ is called \emph{stable} under the action of $\mathcal G$ \emph{with respect to $A$} (\emph{or with respect to $Z$}) if the following holds:\\
There exists an $f \in A^\mathcal G$ such that $x \in X_f=\{y \in X, f(y) \neq 0\}$ and $\pi:X_f \to Y_f:= Spec \, A_f^\mathcal G$ is open and an orbit map. If $X=Z=Spec \,A$ we call a point stable with respect to $A$ just \emph{stable}.
\end{definition}

Let $X^s(A)$ denote the set of stable points of $X$ (under $\mathcal G$ with respect to $A$).

\begin{proposition}[\cite{GP1}]\label{P:P1.1}
With the previous notations, we have that:
\begin{enumerate}
\item $X^s(A)$ is open and $\mathcal G$-stable.
\item $X^s(A)/\mathcal G$ exists and is a quasiaffine algebraic variety.
\item If $V \subset Spec \,A^\mathcal G$ is open, $U=\pi^{-1}(V)$ and $\pi: U \to V$ is a geometric quotient then $U \subset X^s(A)$.
\item If $X$ is reduced then $X^s(A)$ is dense in $X$.
\end{enumerate}
\end{proposition}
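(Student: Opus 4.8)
The plan is to build the global stable quotient by gluing basic affine charts, using two facts that make unipotent actions manageable: localization at an invariant function commutes with taking invariants, and the orbits of a unipotent group on an affine variety are Zariski closed. First I record the local lemma. If $f \in A^\mathcal G$ then $X_f=\{f\neq 0\}$ is open and $\mathcal G$-stable, since $f$ is invariant, and $(A_f)^\mathcal G=(A^\mathcal G)_f$, since localization is flat and hence commutes with the left exact functor of invariants when $f$ is itself invariant. Consequently $Y_f=Spec\, A_f^\mathcal G$ is the basic open $(Spec\, A^\mathcal G)_f$ of $Y$, and $\pi$ restricts to the canonical map $X_f\to Y_f$. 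Statement $(1)$ follows at once: if $x\in X^s(A)$ is witnessed by $f$, the same $f$ witnesses stability of every point of $X_f$, so $X_f\subseteq X^s(A)$ and $X^s(A)=\bigcup_f X_f$ is a union of $\mathcal G$-stable open sets.

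For $(2)$ I would glue the local quotients. By the definition of stability, for each witnessing $f$ the map $\pi:X_f\to Y_f$ is surjective, open and an orbit map; the remaining sheaf condition $(\pi_\ast\mathcal O_{X_f})^\mathcal G=\mathcal O_{Y_f}$ is exactly the localization identity above. Hence each $\pi:X_f\to Y_f$ is a geometric quotient. On overlaps $X_f\cap X_g=X_{fg}$ the maps agree and land in $Y_{fg}$, and since all three defining conditions of a geometric quotient are local on the base they glue to a geometric quotient $\pi:X^s(A)\to\bigcup_f Y_f$. The target is an open subscheme of the affine $Y$, hence quasiaffine; that it is of finite type over $\C$ is read off chart by chart, each $X_f$ being a finite-type geometric quotient over $Y_f$.

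Statement $(3)$ is a restriction argument. Given the geometric quotient $\pi:U\to V$ with $U=\pi^{-1}(V)$ and a point $x\in U$, choose $f\in A^\mathcal G$ with $\pi(x)\in Y_f\subseteq V$, which is possible because basic opens form a basis of the topology of the affine scheme $Y$. Then $\pi^{-1}(Y_f)=X_f$, since $Y_f\subseteq V$ forces $X_f\subseteq\pi^{-1}(V)=U$. A geometric quotient restricts to a geometric quotient over any open of the base, so $\pi:X_f\to Y_f$ is a geometric quotient, in particular an open orbit map; thus $x$ is stable. As $x$ was arbitrary, $U\subseteq X^s(A)$.

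Finally, for $(4)$ I would invoke Rosenlicht's theorem: for $X$ reduced there is a dense $\mathcal G$-stable open $X_0\subseteq X$ admitting a geometric quotient. The work is to realize this quotient over a basic open of $Y=Spec\,A^\mathcal G$, i.e. to arrange $X_0=\pi^{-1}(V)$; then $(3)$ gives $X_0\subseteq X^s(A)$, whence density. \textbf{This matching step is the main obstacle.} Its content is that, on each irreducible component, the field of $\mathcal G$-invariant rational functions coincides with the fraction field of $A^\mathcal G$: since a unipotent group has no nontrivial characters, every invariant rational function is a ratio of invariant regular functions. The closedness of unipotent orbits then ensures that $\pi$ separates orbits in general position, so the Rosenlicht quotient is birationally $\pi$ and may be taken over a basic open $Y_f$. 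Reducing from the reduced to the irreducible case completes the argument.
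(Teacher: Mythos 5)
First, a point of reference: the paper states this proposition as a citation from \cite{GP1} and supplies no proof of its own, so there is no internal argument to compare yours against; what follows is an assessment of your sketch measured against the original source.

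Your treatments of (1) and (3) are correct and are the standard arguments: the localization identity $(A_f)^{\mathcal G}=(A^{\mathcal G})_f$ for invariant $f$ (immediate in the Lie-algebra formulation from $\delta(a/f^n)=\delta(a)/f^n$), the observation that a single witness $f$ certifies stability of every point of $X_f$, and the fact that the three conditions defining a geometric quotient are local on the base. Two places, however, carry genuine gaps. In (2), the conclusion is not merely that a geometric quotient exists but that it is a quasiaffine algebraic \emph{variety}, i.e. of finite type over $\C$. Your ``read off chart by chart'' step hides the central difficulty: for unipotent groups the invariant ring $A_f^{\mathcal G}$ need not be finitely generated (this is precisely the circle of ideas around Nagata's counterexample to Hilbert's fourteenth problem), so ``open subscheme of $Spec\, A^{\mathcal G}$'' does not by itself produce an algebraic variety. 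In \cite{GP1} this is where much of the substance of the proof lies; one must use the special structure of unipotent orbits to show that the open image $\pi(X_f)$ is covered by affine opens with finitely generated coordinate rings. Note also that surjectivity of $X_f\to Y_f$ is not part of Definition \ref{D:D1.1} (only ``open and an orbit map''), so before gluing you must replace $Y_f$ by the open image $\pi(X_f)$. In (4) you have identified the right tool --- Rosenlicht's theorem together with the fact that a unipotent group has no nontrivial characters, so every invariant rational function is a quotient of invariant regular functions and $\mathrm{Quot}(A)^{\mathcal G}=\mathrm{Quot}(A^{\mathcal G})$ on each component --- but you explicitly flag the matching step, namely arranging the Rosenlicht quotient to be of the form $\pi^{-1}(V)\to V$ over a basic open of $Spec\, A^{\mathcal G}$ so that (3) applies, as an obstacle and then only gesture at it. That step, together with the reduction from the reduced to the irreducible case, is the actual content of (4) and is not completed here.
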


\begin{definition}\label{D:D1.2}
A geometric quotient $\pi:X \to Y$ is \emph{locally trivial} if an open covering $\{V_i\}_{i \in I}$ of $Y$ and $n_i \geq 0$ exist, such that $\pi^{-1}(V_i) \cong V_i \times A_{\C}^{n_i}$ over $V_i$.
\end{definition}

We use the following notations:\\
Let $L \subseteq Der^{nil}(A)$ be a nilpotent Lie-algebra and $d: A \to Hom_{\C}(L,A)$ the differential defined by $da(\delta)=\delta(a)$. If $B \subset A$ is a subalgebra then $ \int B := \{a \in A : \delta(a) \in B \;\text{for all} \; \delta \in L\}$. If $\mathfrak{a} \subset A$ is an ideal, $V(\mathfrak{a})$ denotes the closed subscheme $Spec \, A/\mathfrak{a}$ of $Spec\, A$ and $D(\mathfrak{a})$ the open subscheme $Spec \, A - V(\mathfrak{a})$.

Let $A$ be a noetherian $\C$-algebra and $L \subseteq Der^{nil}(A)$  a finite dimensional nilpotent Lie-algebra. Suppose that $A = \cup_{i \in \mathbb{Z}} F^i(A)$ has a filtration
\[
F^\bullet : 0=F^{-1}(A) \subset F^0(A) \subset F^1(A) \subset \ldots
\]
by sub-vector spaces $F^i(A)$ such that 
\begin{flalign*}
(F)& \hspace{2,5cm} \delta F^i(A) \subseteq F^{i-1}(A) \; \text{for all} \; i \in \mathbb{Z}  \; \text{and all} \; \delta \in L. &
\end{flalign*}
Assume, furthermore, that
\[
Z_\bullet : L=Z_0(L) \supseteq Z_1(L) \supseteq \ldots \supseteq Z_\ell(L) \supseteq Z_{\ell+1}(L)=0
\]
is filtered by  sub-Lie-algebras $Z_j(L)$ such that
\begin{flalign*}
(Z)& \hspace{2,5cm} [L,Z_j(L)] \supseteq Z_{j+1}(L)  \; \text{for all} \; j \in \mathbb{Z}. &
\end{flalign*}

The filtration $Z_\bullet$ of $L$ induces projections
\[
\pi_j: Hom_\C(L,A) \to Hom_\C(Z_j(L),A).
\]
For a point $t \in Spec\, A$ with residue field $\kappa(t)$ let
\[
r_i(t):= dim_{\kappa(t)} AdF^i(A) \otimes_A \kappa(t) \qquad i=1, \ldots,\rho,
\]
with $\rho$ minimal such that $AdF^\rho(A)=AdA$,
 \[
s_i(t):= dim_{\kappa(t)} \pi_j(AdA) \otimes_A \kappa(t) \qquad j=1, \ldots,\ell,
\]
such that $s_j(t)$ is the orbit dimension of $Z_j(L)$ at $t$.

Let $Spec \, A =\cup U_\alpha$ be the flattening stratification of the modules 
\[
 Hom_\C(L,A)/AdF^i(A), \qquad  i=1, \ldots,\rho
 \]
and
\[
Hom_\C(Z_j(L),A)/\pi_j(AdA), \qquad  j=1, \ldots,\ell.
 \]

\begin{theorem}[\cite{GP1}]\label{T:T1.2}
Each stratum $U_\alpha$ is invariant by $L$ and admits a locally trivial geometric quotient with respect to the action of $L$. The functions $r_i(t)$ and $s_i(t)$ are constant along $U_\alpha$. Let $x_1,\ldots,x_p \in A, \delta_1,\ldots,\delta_q \in L$ satisfying the following properties:
	\begin{itemize}
	\item there are $\nu_1, \ldots,\nu_\rho, 0 \leq \nu_1 < \ldots < \nu_\rho=p$, such that $dx_1,\ldots,dx_{\nu_i}$ generate the $A$-module $AdF^i(A)$;
	\item there are $\mu_0,\ldots, \mu_\ell, 1=\mu_0<\mu_1<\ldots<\mu_\ell$ such that $\delta_{\mu_j},\ldots,\delta_m \in Z_j(L)$ and $Z_j(L) \subseteq \sum_{i \geq \mu_j} A\delta_i$. 
	\end{itemize}
	Then
	\begin{align}
	rank(\delta_\alpha(x_\beta)(t))_{\beta \leq \nu_i}= &~r_i(t) \qquad i=1, \ldots,\rho, \label{RDR} \\
	rank(\delta_\alpha(x_\beta)(t))_{\alpha \geq \mu_j}= &~s_j(t) \qquad j=1, \ldots,\ell. \label{RDS}
	\end{align}
	The strata $U_\alpha$ are defined set theoretically by fixing (\ref{RDR}) and (\ref{RDS}).
	\end{theorem}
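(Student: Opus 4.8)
The plan is to obtain the strata $U_\alpha$ from the flattening stratification of the listed finite families of $A$-modules, to read off the constancy of the rank functions and the matrix-rank identities essentially from the definitions, and to produce the geometric quotients by peeling off the unipotent group $\mathcal G=\exp L$ along the filtration $Z_\bullet$. First I would recall that for a finitely generated module $M$ over the noetherian ring $A$ there is a canonical flattening stratification of $Spec\,A$ into locally closed subschemes along which $M$ is locally free, and along which the fibre dimension $\dim_{\kappa(t)}M\otimes_A\kappa(t)$ is therefore constant. Applying this simultaneously to the modules
\[
Hom_\C(L,A)/AdF^i(A),\quad i=1,\dots,\rho,\qquad Hom_\C(Z_j(L),A)/\pi_j(AdA),\quad j=1,\dots,\ell,
\]
and passing to the common refinement yields the $U_\alpha$. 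Since $Hom_\C(L,A)$ and $Hom_\C(Z_j(L),A)$ are free of finite rank, flatness of the quotients forces $AdF^i(A)$ and $\pi_j(AdA)$ to have constant fibre dimension along each $U_\alpha$; these dimensions are exactly $r_i(t)$ and $s_j(t)$, which gives the constancy assertion.

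Next I would show that each $U_\alpha$ is $L$-invariant. The decisive input is property (F): the formal flow $\exp(t\delta)$ of a $\delta\in L$ preserves each $F^i(A)$, because $\delta F^i(A)\subseteq F^{i-1}(A)\subseteq F^i(A)$, and $d$ is equivariant, so $\exp(t\delta)$ preserves $AdF^i(A)$ as a submodule of $Hom_\C(L,A)$. Hence the fibre dimension $r_i$ is constant along the orbits of $\mathcal G$. For the $s_j$ I would instead use that $s_j(t)$ is, by construction, the dimension of the orbit of $Z_j(L)$ at $t$, which is manifestly constant along the orbit through $t$. Since each $U_\alpha$ is cut out set-theoretically by fixing all the values $r_i$ and $s_j$, it is therefore $\mathcal G$-stable, and so $L$-invariant.

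The identities (\ref{RDR}) and (\ref{RDS}) are then a matter of unwinding the two bulleted hypotheses on the $x_\beta$ and $\delta_\alpha$. Identifying $Hom_\C(L,A)\cong A^{\dim_\C L}$ by evaluation on a basis of $L$, the element $dx_\beta$ has coordinate vector $(\delta_\alpha(x_\beta))_\alpha$, so the matrix $(\delta_\alpha(x_\beta)(t))$ is precisely the coordinate matrix at $t$ of the generators $dx_1,\dots,dx_p$ of $AdA$. Restricting to the columns $\beta\le\nu_i$ singles out generators of $AdF^i(A)$, so the rank of that submatrix equals $r_i(t)$; composing with $\pi_j$ amounts to keeping only the rows $\alpha\ge\mu_j$, whose $\delta_\alpha$ span $Z_j(L)$ over $A$, so the rank of that submatrix equals $s_j(t)$. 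Because the flattening stratification is determined by the fibre ranks of the modules involved, fixing (\ref{RDR}) and (\ref{RDS}) defines the $U_\alpha$ set-theoretically.

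The main obstacle is the local triviality of the geometric quotient on a fixed stratum $U_\alpha$. Here I would argue by induction along $Z_\bullet$: property (Z) realises $\mathcal G=\exp L$ as an iterated central extension by the vector groups $\exp\bigl(Z_j(L)/Z_{j+1}(L)\bigr)$. On $U_\alpha$ the orbit dimensions $s_j(t)$ are constant, so that by Proposition \ref{P:P1.1} every point of $U_\alpha$ is stable and the quotient $U_\alpha\to U_\alpha/\mathcal G$ exists as a geometric quotient. At each stage the functions $x_\beta$ furnished by the first hypothesis serve as fibre coordinates transverse to the orbits, and property (F) guarantees that each $\delta$ acts locally nilpotently with an algebraic flow, so that each elementary vector-group quotient is a trivial affine bundle; composing these bundles over the open covering produced at each step yields the product structure $\pi^{-1}(V_i)\cong V_i\times\mathbb A^{n_i}_\C$ of Definition \ref{D:D1.2}. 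Arranging that the successive slices can be chosen coherently over a common covering — so that the elementary trivializations compose — is the delicate point, and is exactly where the interplay of properties (F) and (Z) with the stability theory of Proposition \ref{P:P1.1} is needed.
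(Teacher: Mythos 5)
The paper offers no proof of Theorem \ref{T:T1.2}: it is imported verbatim from Greuel and Pfister \cite{GP1}, so there is nothing in the text to compare your argument against, and I judge it against the source instead. Your overall strategy is the right one and is the strategy of \cite{GP1}: take the common refinement of the flattening stratifications of the modules $Hom_\C(L,A)/AdF^i(A)$ and $Hom_\C(Z_j(L),A)/\pi_j(AdA)$, read off the constancy of $r_i$ and $s_j$ and the rank identities (\ref{RDR}), (\ref{RDS}) from the freeness of $Hom_\C(L,A)$ and the two bulleted generation hypotheses, and obtain invariance and local triviality by induction along the filtration $Z_\bullet$, peeling off vector groups.

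The genuine gap is in the last step, and you have in effect flagged it yourself. First, the inference ``on $U_\alpha$ the orbit dimensions $s_j(t)$ are constant, so by Proposition \ref{P:P1.1} every point of $U_\alpha$ is stable'' is a non sequitur: Proposition \ref{P:P1.1} only asserts that the stable locus is open and $\mathcal G$-invariant, that its quotient exists as a quasiaffine variety, and that it is dense when $X$ is reduced; it contains no criterion deducing stability from constancy of orbit dimensions. Showing that every point of a stratum is stable is precisely what has to be proved, and in \cite{GP1} this is done through the inductive slice construction, not quoted from the general stability theory. Second, the local triviality itself --- producing, over a suitable open cover of the quotient, functions $x_\beta$ that trivialize each elementary $\exp\bigl(Z_j(L)/Z_{j+1}(L)\bigr)$-bundle compatibly, using (F) to integrate the locally nilpotent flows and (Z) to control the induction --- is the hard core of the theorem, and your proposal explicitly defers it as ``the delicate point.'' As it stands, your argument establishes the stratification, the constancy of $r_i$ and $s_j$, and the rank formulas, but not the existence and local triviality of the geometric quotient, which is the part of the statement the rest of the paper actually relies on.
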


\section{Filtrations and Strata}\label{FS}

Set $L=[L_C,L_C]$. Fix a integer $a$ such that $k\geq a \geq 0$. For each $i \in \mathbb{Z}$ let $F^i_a$ be the $\C$-vector space generated by monomials in $\C[s_C]$ of degree  $ \geq -(a+ik)$. Since $o(\delta) \geq k$ for each homogeneous vector field of $L$, $LF^j_a \subseteq F^{j-1}_a$ for each $j$. For each $m\in \mathbb{Z}$ let $I^m_a$ be the ideal of $\C[[x,y]]$ generated by the monomials of degree $\geq a + mk$. Let $\rho$ be the smallest $i$ such that $dF^i_a$ generates $\C [s_C]d\C [s_C]$ as a $\C [s_C]$-module.

Given $\alpha \in \mathbb{Z}$, set $\alpha^\vee:=nk-k^2-2n-\alpha$. For each integer $j$ set $S_j=\{\alpha : s_{\alpha^\vee} \in F^{\rho-j}_a, \, \alpha \neq 0\}$ and let $Z^a_j$ be the sub-Lie algebra of $L$ generated by the homogeneous vector fields $\delta \in L$ such that $o(\delta)\in S_j$. Remark that
\[
Z^a_1=L,\; Z^a_{\rho+1}=0\qquad \text{and}\qquad [L, Z^a_j] \subseteq Z^a_{j+1}.
\]
For each $t \in \C^C$ let $I^\mu_t$ be the ideal of $\C[[x,y]]$ generated by $F_{t},\Delta F_{t}$ and $H^1_{t},\ldots,H^{k-2}_{t}$. Set
\begin{align*}
\widehat{\tau}_{a,1}^m(t)&=dim_\C\, \C[[x,y]]/(I^\mu_t,I^m_a),\\
\widehat{\tau}_{a,2}^m(t)&=dim_\C\, \C[[x,y]]/(\Delta F_{t},(F_t,H^1_{t},\ldots,H^{k-2}_{t}) \cap I^{\rho-1+2n-m}_a),
\end{align*}
for $m=n,\ldots,n+\rho$ and
\[
\widehat{\tau}_a^\bullet(t)=(\widehat{\tau}_{a,1}^n(t),\ldots, \widehat{\tau}_{a,1}^{n+\rho}(t) ;\widehat{\tau}_{a,2}^n(t), \ldots, \widehat{\tau}_{a,2}^{n+\rho}(t)).
\]
We say that $\widehat{\tau}_a^\bullet(t)$ is the \emph{microlocal Hilbert function} of $X_t$. Set
\begin{align*}
\widehat{\mu}&=\#C=\mu-(k-2)(k-1)/2,\\
 \widehat{\mu}_1^k&=\widehat{\mu}-\#\{m_\ell \in I^k_a: \ell \in I_C\},\\
 \widehat{\mu}_2^k&=\mu-\#\{m_\ell \in I^{\rho-1+2n-k}_a: \ell \in I_{B\setminus C}\}.
 \end{align*}
We only define $\widehat{\tau}_a^\bullet(t)$ for $m=n,\ldots,n+\rho$ because 
\[
\widehat{\tau}_{a,1}^m(t)=\widehat{\tau}_{a,2}^m(t)=\widehat{\tau}(X_t)
\]
(the microlocal Tjurina number of $X_t$) if $m$ is big and
\[
\widehat{\tau}^m_{a,1}(t)= dim_\C\, \C[[x,y]]/I^m_a, \qquad \widehat{\tau}^m_{a,2}(t)= \widehat{\mu}_2^m
\]
(hence independent of $t$) if $m$ is small.

Let $\{U^a_\alpha\}$ be the flattening stratification of $\C^C$ corresponding to $F^\bullet_a$ and $Z^a_\bullet$. It follows from Theorem \ref{T:T1.2}  that $U^a_\alpha \to U^a_\alpha/L$ is a geometric quotient. Moreover, ${L_C}/L \cong \C^\ast$ acts on $U^a_\alpha/L$ and $U^a_\alpha/\mathcal{L}_C= U^a_\alpha/{L_C}$ is a geometric quotient of $U^a_\alpha$ by ${L_C}$. For $t \in \C^C$ let us define 

\[
\underline{e}^a (t)=(u^a_0(t), \ldots, u^a_\rho(t);v^a_0(t), \ldots, v^a_\rho(t)) \in \mathbb{N}^{2\rho+2}, 
\]
where
\[
u^a_j (t)=rank(\delta({s'}_\beta)(t))_{o(r(\beta)) \leq a + jk}, \qquad j=0, \ldots,\rho,
\] 
and
\[
v^a_j (t)=rank(\delta({s'}_\beta)(t))_{ o(\delta) \in S_j}, \qquad j=0, \ldots,\rho.
\]

\begin{lemma}\label{Final1}
The function $t \mapsto \underline{e}^a (t)$ is constant on $U^a_\alpha$ and takes different values for different $\alpha$. The analytic structure of $U^a_\alpha$ is defined by the corresponding subminors of $(\delta({ s_C})(t))$. Moreover, $u^a_j (t)=\widehat{\mu}_1^{n+j}  - \widehat{\tau}_{a,1}^{n+j}(t)$ and $v^a_j (t)=\widehat{\mu}_2^{n+j}-\widehat{\tau}_{a,2}^{n+j}(t)$. In particular, $u^a_\rho(t)=v^a_\rho(t)=\widehat{\mu}-\widehat{\tau}(X_t)$ where $\widehat{\tau}(X_t)$ is the microlocal Tjurina number of the curve singularity $X_t$.
\end{lemma}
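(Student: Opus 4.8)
The plan is to read Lemma \ref{Final1} as the concrete incarnation, for the specific data $A=\C[s_C]$, $L=[L_C,L_C]$, the filtration $F^\bullet_a$ and the central series $Z^a_\bullet$, of the abstract stratification Theorem \ref{T:T1.2}. The two hypotheses needed to invoke that theorem — condition $(F)$, i.e.\ $\delta F^i_a\subseteq F^{i-1}_a$, and condition $(Z)$, i.e.\ $[L,Z^a_j]\subseteq Z^a_{j+1}$ — have already been recorded when the filtrations were introduced, so the machinery applies. What remains is to match our quantities with those of Theorem \ref{T:T1.2} and then to compute them.

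First I would identify the two families of ranks. By construction the homogeneous coordinate $s'_\beta$ lies in $F^j_a$ precisely when $o(r(\beta))\leq a+jk$, so $(\delta(s'_\beta)(t))_{o(r(\beta))\leq a+jk}$ is a generating matrix of $AdF^j_a\otimes\kappa(t)$; hence $u^a_j(t)=r_j(t)$ in the notation of Theorem \ref{T:T1.2} via formula (\ref{RDR}). Likewise, since $Z^a_j$ is generated by the homogeneous fields $\delta$ with $o(\delta)\in S_j$, the matrix $(\delta(s'_\beta)(t))_{o(\delta)\in S_j}$ presents $\pi_j(AdA)\otimes\kappa(t)$, so $v^a_j(t)=s_j(t)$ via (\ref{RDS}). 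With these identifications the first three assertions are exactly Theorem \ref{T:T1.2}: the tuple $\underline e^a(t)$ is constant along each flattening stratum $U^a_\alpha$; the strata are cut out set-theoretically by fixing these ranks, so distinct strata carry distinct values of $\underline e^a$; and the scheme structure of $U^a_\alpha$ is the flattening structure, that is, it is defined by the Fitting ideals — equivalently the subminors — of the presentation matrix $(\delta(s_C)(t))$.

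The heart of the proof is the ``Moreover'' part. For the $u$-formula I would build, for each degree $n+j$, a $t$-independent space $W^{n+j}_1$ of dimension $\widehat\mu^{n+j}_1$ together with a linear map $\phi_t\colon\bigoplus_{\ell,\gamma}\C\,(m_\ell,\gamma)\to W^{n+j}_1$ sending $(m_\ell,\gamma)$ to the class of $m_\ell H^\gamma_t$ in $R_t=\C[[x,y]]/\Delta F_t$ truncated modulo $I^{n+j}_a$. Using (\ref{CIJGA}) and the fact that $\{m_\upsilon\}_{1\le\upsilon\le\mu}$ is a $\C[s_C]$-basis of $R_t$, the matrix of $\phi_t$ is, after truncation, the coefficient matrix $(c^\gamma_{\ell,\upsilon}(t))$ of the fields in (\ref{DEFDEL}), whose relevant rank is precisely $u^a_j(t)$; by the definition of $I^\mu_t$ the cokernel of $\phi_t$ is $\C[[x,y]]/(I^\mu_t,I^{n+j}_a)$, of dimension $\widehat\tau^{n+j}_{a,1}(t)$. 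Rank-nullity then gives $u^a_j(t)=\widehat\mu^{n+j}_1-\widehat\tau^{n+j}_{a,1}(t)$, once one checks that $\dim W^{n+j}_1$ is the count of basis monomials $m_\ell$, $\ell\in I_C$, surviving the truncation by $I^{n+j}_a$.

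For the $v$-formula the same scheme applies, but the restriction is now imposed on the vector-field side ($\delta\in Z^a_j$, i.e.\ $o(\delta)\in S_j$) rather than on the coordinate side; this is why the complementary truncation degree $\rho-1+2n-m$ and the intersection $(F_t,H^1_t,\dots,H^{k-2}_t)\cap I^{\rho-1+2n-m}_a$ appear in the definition of $\widehat\tau^m_{a,2}$. The passage between the two sides is governed by the Gorenstein (Poincar\'e) duality of the complete-intersection Milnor algebra $R_t$, whose socle degree fixes the involution $\alpha\mapsto\alpha^\vee=nk-k^2-2n-\alpha$ used to define $S_j$, pairing the part of $\phi_t$ seen by $Z^a_j$ with the complementary-degree cokernel and yielding $v^a_j(t)=\widehat\mu^{n+j}_2-\widehat\tau^{n+j}_{a,2}(t)$. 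The ``In particular'' statement is the case $j=\rho$: there $F^\rho_a$ generates all of $AdA$ and $Z^a_j$ exhausts $L$, so both ranks equal the full orbit dimension, while both truncated Tjurina numbers stabilize to $\widehat\tau(X_t)$, whence $u^a_\rho(t)=v^a_\rho(t)=\widehat\mu-\widehat\tau(X_t)$. The hard part will be making these two sequences genuinely exact — verifying that the cokernels are exactly the claimed truncated quotients and that the source and target dimensions equal the $\widehat\mu^{n+j}_i$ — and, for the $v$-formula, invoking the duality of $R_t$ with the shift $\alpha^\vee$ so that the restricted rank on $Z^a_j$ matches the complementary-degree Tjurina truncation.
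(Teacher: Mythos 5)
Your reduction of the first three assertions to Theorem \ref{T:T1.2}, via the identifications $u^a_j(t)=r_j(t)$ (formula (\ref{RDR})) and $v^a_j(t)=s_j(t)$ (formula (\ref{RDS})), is exactly what the paper does, and your treatment of the $u$-formula coincides with the paper's: one reads the relations (\ref{CIJGA}) modulo $(\Delta F_t, I^{n+j}_a)$ as saying that the matrix $\bigl(\delta^\gamma_\ell(s_{o(\upsilon)})(t)\bigr)$ presents the image of the elements $m_\ell F_t$, $m_\ell H^\gamma_t$ in $\C[[x,y]]/(\Delta F_t,I^{n+j}_a)$, so that the cokernel is $\C[[x,y]]/(I^\mu_t,I^{n+j}_a)$ and rank--nullity gives $u^a_j(t)=\widehat{\mu}^{n+j}_1-\widehat{\tau}^{n+j}_{a,1}(t)$. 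The ``in particular'' statement for $j=\rho$ is also handled as you indicate.

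The gap is in the $v$-formula, precisely the step you yourself flag as ``the hard part''. You propose to pass from the row-restriction $o(\delta)\in S_j$ to the complementary truncation $I^{\rho-1+2n-m}_a$ by invoking Gorenstein duality of $R_t$, ``whose socle degree fixes the involution $\alpha\mapsto\alpha^\vee$''. That attribution does not hold: the socle of $R_t=\C[[x,y]]/\Delta F_t$ is generated by $x^{n-2}y^{k-2}$, of $o$-degree $2kn-2k-2n$, and the reflection about this degree (or about its shift $\omega=kn-2k-2n$) does not equal $\alpha^\vee=nk-k^2-2n-\alpha$; the $-k^2$ term comes from the degrees of the $H^\gamma$, not from $R_t$. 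The paper uses no duality at all. The correct and elementary mechanism is pure degree bookkeeping: the row indexed by $\delta^\gamma_\ell$ has order $\alpha^\gamma_\ell$, the product $m_\ell H^\gamma_t$ is homogeneous of degree $\alpha^\gamma_\ell+kn$, and unwinding the definitions of $S_{m-n}$, of $F^\bullet_a$ and of $I^{\rho-1+2n-m}_a$ yields the equivalence
\[
m_\ell F_t,\; m_\ell H^{\gamma}_t \in I^{\rho-1+2n-m}_a
\quad\Longleftrightarrow\quad
\alpha^0_\ell,\;\alpha^\gamma_\ell \in S_{m-n}.
\]
Once this is checked, the submatrix with rows in $Z^a_j$ presents exactly the image of $(F_t,H^1_t,\ldots,H^{k-2}_t)\cap I^{\rho-1+2n-m}_a$ in $R_t$, and the same rank--nullity count as before gives $v^a_j(t)=\widehat{\mu}^{n+j}_2-\widehat{\tau}^{n+j}_{a,2}(t)$. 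You should replace the appeal to duality by this direct verification; as written, that step would not go through.
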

\begin{proof}
That $\underline{e}^a (t)$ is constant on $U^a_\alpha$ and takes different values for different $\alpha$ is a consequence of Theorem, \ref{T:T1.2}, as is the claim about the analytic structure of each strata. 

Let $t \in U^a_\alpha$ and consider for each $m \in \{n,\ldots,n+\rho\}$ the induced $\C$-base $\{m_{\ell \in J_m}(t)\}=\{m_{\ell \in J_m}\}$ of $\C\{x,y\}/(\Delta F_t,I^m_a)$. Then, for each $\ell \in J_m$
\[
m_\ell F_t = \sum_{j=1}^b {\delta_{\ell}^0}(s_{o(j)})(t)m_{\mu-b+j} \; mod \, (\Delta F_t,I^m_a)
\]
and
\[
m_\ell H_t^{\gamma} = \sum_{j=1}^b {\delta_{\ell}^\gamma}(s_{o(j)})(t)m_{\mu-b+j} \; mod \, (\Delta F_t,I^m_a)
\]
for $\gamma=1,\ldots,k-2$. Then, by definition of $\widehat{\tau}_a^\bullet(t)$ and from the definition of $\{\delta_\mu\}$, $u^a_j (t)= \widehat{\mu}_1^{n+j}  - \widehat{\tau}_{a,1}^{n+j}(t)$.

The proof of the claim about the $v^a_j (t)$ is similar with the difference that we're now interested in the relations $mod\; \Delta F_t$ between the $m_\ell F_t, m_\ell H_t^{\gamma}$ that belong to $I^{\rho-1+2n-m}_a$ for each $m \in \{n,\ldots,n+\rho\}$. Note that $m_\ell F_t, m_\ell H_t^{\gamma} \in I^{\rho-1+2n-m}_a$ if and only if $\alpha^0_\ell, \alpha^\gamma_\ell \in S_{m-n}$.

\end{proof}

\begin{lemma}\label{IMANIFOLFPREV}
If $a,b \in \C^B$ are such that $\mathcal{C}on(\F_a) \cong \mathcal{C}on(\F_b)$, there is $\psi:\C \to \C^B$ microlocally trivial such that $\psi(0)=a$ and $\psi(1)=b$.
\end{lemma}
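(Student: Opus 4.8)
The plan is to realize the abstract isomorphism $\mathcal{C}on(\F_a)\cong\mathcal{C}on(\F_b)$ by a germ of contact transformation, join that transformation to the identity through a family of contact transformations of controlled type, and then read off $\psi$ from the completeness of $\F_B$. By hypothesis there is a germ of contact transformation $\chi:(\C^3,0)\to(\C^3,0)$ with $\chi(\mathcal{C}on(\F_a))=\mathcal{C}on(\F_b)$. Using the parametrization $X=t^k$, $Y=\theta t^n+\cdots$, $P=\frac nk\theta t^{n-k}+\cdots$ of $\mathcal{C}on(\F_a)$ from the proof of Lemma \ref{LEMMAH}, and the inequality $k<n-k$ coming from $n>2k$, the lowest-order term is $X\sim t^k$, so $\mathcal{C}on(\F_a)$ is tangent at the origin to the line $\{y=p=0\}$; the same holds for $\mathcal{C}on(\F_b)$. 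Hence $D\chi(0)$ preserves $\{y=p=0\}$, and Theorem \ref{ALLCONTACT-3} furnishes a factorization $\chi=\Phi_1\Phi_3$ with $\Phi_1$ of type (\ref{SEMIS-3}) and $\Phi_3$ of type (\ref{abc3}), the paraboloidal factor being the identity.

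Next I would connect $\chi$ to the identity inside these two classes. For $\Phi_1(x,y,p)=(\lambda x,\lambda\mu y,\mu p)$, choose analytic paths $t\mapsto\lambda(t),\mu(t)$ in $\C^\ast$ with $\lambda(0)=\mu(0)=1$ and $\lambda(1)=\lambda$, $\mu(1)=\mu$ (possible since $\C^\ast$ is path connected, e.g. $\lambda(t)=e^{t\log\lambda}$), giving a family $\Phi_{1,t}$ of type (\ref{SEMIS-3}) with $\Phi_{1,0}=\mathrm{id}$. Writing $\Phi_3=\chi_{\alpha,\beta_0}$ as in Theorem \ref{T:RCKT3}, set $\Phi_{3,t}:=\chi_{t\alpha,t\beta_0}$; Theorem \ref{T:RCKT3} guarantees this is a well-defined family of transformations of type (\ref{abc3}) with $\Phi_{3,0}=\mathrm{id}$ and $\Phi_{3,1}=\Phi_3$. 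Each factor fixes the origin and the trivial section and commutes with the projection, so the composite $\chi_t:=\Phi_{1,t}\Phi_{3,t}$ satisfies $\chi_0=\mathrm{id}$, $\chi_1=\chi$, and assembles into a relative contact transformation $\chi_\bullet$ of $\C^3\times\C$ over $\C$ verifying (\ref{DIDENTITY3}).

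I would then push the trivial family forward. Set $\mathcal{Y}^{\chi}:=\pi\bigl(\chi_\bullet(\mathcal{C}on(\F_a)\times\C)\bigr)$. The constant family $\F_a\times\C$ is an equisingular deformation of the generic plane curve $\F_a$ with trivial normal cone along its trivial section (being constant), so Theorem \ref{CONTACTEQUI-3} applies and $\mathcal{Y}^{\chi}$ is a generic equisingular deformation. Because $\F_B$ is complete (it is semiuniversal, Theorem \ref{LAST-3}, and versality is open), the equisingular deformation $\mathcal{Y}^{\chi}$, whose central fibre is $\F_a=\F_B|_a$, is induced by a map $\psi:\C\to\C^B$ with $\psi^\ast\F_B\cong\mathcal{Y}^{\chi}$. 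Evaluating at the endpoints, $\chi_0=\mathrm{id}$ gives the fibre $\pi(\mathcal{C}on(\F_a))=\F_a$, while $\chi_1=\chi$ gives $\pi(\chi(\mathcal{C}on(\F_a)))=\pi(\mathcal{C}on(\F_b))=\F_b$, so $\psi(0)=a$ and $\psi(1)=b$. Finally, by Theorem \ref{T:RCONORMAL-3} the conormal of $\mathcal{Y}^{\chi}$ equals $\chi_\bullet(\mathcal{C}on(\F_a)\times\C)$, which is relatively contactomorphic to the constant family $\mathcal{C}on(\F_a)\times\C$ via $\chi_\bullet^{-1}$; thus $\psi^\ast\mathcal{C}on(\F_B)$ is microlocally trivial, which is exactly the assertion.

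The main obstacle I expect lies in the assembly of the second and third paragraphs: one must verify that the analytic families of factors genuinely combine into a single relative contact transformation over $\C$ fulfilling the normalizations (\ref{DIDENTITY3}), that $\chi_\bullet$ preserves generic position and the triviality of the normal cone along the \emph{whole} path (so every intermediate fibre is again a generic equisingular curve and the classifying map stays in $\C^B$), and that the resulting $\psi$ is defined on all of $\C$ rather than merely germinally at $a$. The structural reduction $\Phi_2=\mathrm{id}$ via the common tangent line $\{y=p=0\}$ is the key input that confines $\chi$ to the two classes that connect to the identity without leaving the admissible family, and it is what ultimately allows Theorem \ref{CONTACTEQUI-3} to be invoked uniformly in $t$.
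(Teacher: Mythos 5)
Your overall strategy coincides with the paper's: realize the isomorphism by a germ of contact transformation, reduce via Theorem \ref{ALLCONTACT-3} to the factors of type (\ref{SEMIS-3}) and (\ref{abc3}), join that transformation to the identity by a relative contact transformation over $\C$ (the paper takes $\chi(t)$ with $\chi(0)=id_{\C^3}$ and $\chi(1)=\chi_0$, essentially your $\chi_{t\alpha,t\beta_0}$ together with a path in $(\C^\ast)^2$), push the constant family through it, and classify the resulting family by versality of $F_B$. The paper phrases this equationally, through the unfolding $G(t)=u(tx,ty)F_B^\chi(x,y,a)$, rather than geometrically through conormals and Theorem \ref{CONTACTEQUI-3}, but up to that point the difference is one of presentation, not of substance.

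The genuine gap is your last step, the identification $\psi(1)=b$. Versality only gives $\psi^\ast\F_B\cong\mathcal{Y}^{\chi}$, i.e.\ that the fibre $F_{\psi(1)}$ is \emph{isomorphic} to $F_b$; it does not place $\psi(1)$ at the point $b$ itself. Distinct points of $\C^B$ routinely carry isomorphic fibres --- this is precisely why the moduli space is the quotient $\C^B/L_B$ rather than $\C^B$ --- so ``evaluating at the endpoints'' establishes nothing. The paper closes this with two additional inputs: because $F_a$ is semiquasihomogeneous, the unfolding $G(t)$ can be brought to the normal form $F_{\psi(t)}$ by a \emph{relative coordinate transformation} $\Phi$ over all of $\C$ (quoting an external normal-form result for SQH unfoldings), and because the semiuniversal family $F_B$ contains no trivial subfamilies with respect to right equivalence, the right equivalence $\Phi(1)(F_b)=F_{\psi(1)}$ forces $\psi(1)=b$. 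Without an argument of this kind your $\psi$ could terminate at some $b'\neq b$ in the same isomorphism class and the lemma would not follow. (The worries you list at the end --- assembling the factors into a single relative contact transformation satisfying (\ref{DIDENTITY3}), genericity along the path, and globality of $\psi$ in $t$ --- are handled by the explicit form of $\chi_{t\alpha,t\beta_0}$, by Theorem \ref{CONTACTEQUI-3}, and by the polynomial dependence of the construction on $t$; the endpoint identification is the step that actually needs the extra idea.)
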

\begin{proof}
Let $\chi_0$ be a contact transformation given by $\alpha,\beta_0$ such that $F_b=uF^{\chi_0}_a$ for some unit $u \in \C\{x,y\}$. We can assume $deg\; \chi_0 > 0$. There is a relative contact transformation $\chi(t)$ over $\C$ such that $\chi(0)=id_{\C^3}$ and $\chi(1)=\chi_0$. Then
\[
G(t)=u(tx,ty)F_B^\chi(x,y,a)
\]
is an unfolding of $F_a$ such that $G(1)=F_b$. By versality of $F_B$ and because $F_a$ is semiquasihomogeneous ($j(F_a)=(F_a,j(F_a))$) there is a relative coordinate transformation 
\begin{align*}
\Phi: &\C\times \C^2 \to \C \times \C^2\\
&(t,x,y) \mapsto (t,\Phi_1,\Phi_2)
\end{align*}
and $\psi:\C \to \C^B$ such that
\[
\Phi\left(G(t)\right)=F_{\psi(t)}.
\]
(see Remark $1.1$ and Corollary $3.3$ of \cite{MSSS}). Now, because $F_B$ is semiuniversal (hence does not contain trivial subfamilies with respect to right equivalence) $\Phi(1)\left(G(1)\right)=\Phi(1)\left(F_b\right)=F_{\psi(1)}$ implies that $\psi(1)=b$. 

\end{proof}

\begin{theorem}\label{IMANIFOLD}
Given $a,b \in \C^C$, $\mathcal{C}on(\F_a) \cong \mathcal{C}on(\F_b)$ if and only if $a$ and $b$ are in the same integral manifold of $\EL_C$.
\end{theorem}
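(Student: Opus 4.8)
The plan is to prove the two implications separately. The backward (``if'') direction will rest on integrating the infinitesimal symmetries recorded by $\EL_C$ into genuine relative contact transformations, while the forward (``only if'') direction will rest on Lemma \ref{IMANIFOLFPREV} together with the injectivity half of Theorem \ref{BIJECTION}. Throughout I use that the integral manifolds of $\EL_C$ coincide with those of $L_C$ (the vector fields $\delta_\ell$, $\ell\in J$, generate $\EL_C$ as a $\C[s_C]$-module, so they span the same distribution), and that by Theorem \ref{BIJECTION} the map $\C^C/L_C\to\C^B/L_B$ induced by $\C^C\hookrightarrow\C^B$ is a bijection.

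For the ``if'' direction, suppose $a$ and $b$ lie in a common integral manifold of $\EL_C$. By the discussion of Section \ref{GQ} these integral manifolds are precisely the orbits of the group $\mathcal G_0=\exp L_C$, so it suffices to treat a single integral curve $\gamma$ of a homogeneous $\delta\in\EL_C$. Since $\delta$ lies in the kernel of the microlocal Kodaira--Spencer map, we have $\delta F_C\in(F_C,\Delta F_C,H^1_C,\dots,H^{k-2}_C)$; modulo $\Delta F_C$ the term $F_C$ contributes the Euler scaling and, by Lemma \ref{LEMMAH}, the $H^\gamma_C$ represent the contact directions $p^\gamma\partial_x F_C$. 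Hence $\delta F_C$ is tangent to the orbit of microlocal (contact plus coordinate) equivalence. Feeding the data $\alpha,\beta_0$ coming from $\delta$ into the Cauchy problem of Theorem \ref{T:RCKT3} produces a relative contact transformation over $\gamma$ carrying $\mathcal{C}on(\F_{\gamma(0)})$ isomorphically onto $\mathcal{C}on(\F_{\gamma(t)})$; concatenating finitely many such curves gives $\mathcal{C}on(\F_a)\cong\mathcal{C}on(\F_b)$.

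For the ``only if'' direction, assume $\mathcal{C}on(\F_a)\cong\mathcal{C}on(\F_b)$ with $a,b\in\C^C\subseteq\C^B$. Viewing $a,b$ as points of $\C^B$ and applying Lemma \ref{IMANIFOLFPREV}, there is a microlocally trivial $\psi\colon\C\to\C^B$ with $\psi(0)=a$ and $\psi(1)=b$. Microlocal triviality of the family $\mathcal{C}on(\F_{\psi(t)})$ means exactly that the velocity $\psi'(t)$ lies in $\ker\rho=\EL_B$ for every $t$, so $\psi$ is an integral path of the distribution $\EL_B$ and $a,b$ lie in a common integral manifold of $\EL_B$, equivalently of $L_B$. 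Thus $a$ and $b$ have the same image in $\C^B/L_B$. Since $a,b\in\C^C$ and the map $\C^C/L_C\to\C^B/L_B$ is injective by Theorem \ref{BIJECTION}, they must already agree in $\C^C/L_C$, i.e.\ they lie in the same integral manifold of $L_C=\EL_C$.

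The main obstacle is the ``only if'' direction, and within it two points deserve care. First, one must justify that the path furnished by Lemma \ref{IMANIFOLFPREV} is truly an integral path of $\EL_B$; that is, one must argue that ``microlocally trivial'' is the integrated form of ``velocity in the kernel of the microlocal Kodaira--Spencer map,'' which is where the identification of the tangent space to triviality with $I^\mu_F$ is genuinely used. Second, the descent from $\C^B$ back to $\C^C$ is not formal: a single integral manifold of $L_B$ could a priori meet $\C^C$ in several integral manifolds of $L_C$, and it is precisely the injectivity in Theorem \ref{BIJECTION} that rules this out. In the ``if'' direction the only nonformal step is passing from the infinitesimal identity $\delta F_C\in(F_C,\Delta F_C,H^1_C,\dots,H^{k-2}_C)$ to an honest relative contact transformation, and this is supplied by the solvability built into Theorem \ref{T:RCKT3} together with the unipotence of $\mathcal G$.
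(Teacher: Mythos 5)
Your argument follows the same route as the paper: reduce from $C$ to $B$ via Theorem \ref{BIJECTION}; prove the ``if'' direction by writing $\delta F_B = a_0F_B+\sum_j a_jH_B^j+\alpha_0\partial_xF_B+\beta_0\partial_yF_B$ for $\delta$ in the kernel of the Kodaira--Spencer map and converting this data into a relative contact transformation via Theorem \ref{T:RCKT3}; and prove the ``only if'' direction by producing a microlocally trivial path joining $a$ to $b$ and showing its velocity lies in $\EL_B$ at every point. The one step you assert without justification, and which the paper treats explicitly, is the passage from ``$\psi'(t)$ lies in the pointwise span of $\EL_B$ for every $t$'' to ``$\psi$ is contained in a single integral manifold of $\EL_B$.'' Integral manifolds are defined here as equivalence classes generated by trajectories of vector fields in the Lie algebra, and the distribution spanned by $\EL_B$ has non-constant rank on $\C^B$, so this implication is not formal. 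The paper handles it by observing that the trajectory of $\psi$ stays inside the union $U$ of strata on which the microlocal Tjurina number equals that of $F_a$ (a contact invariant), and that $\EL_{B|U}$ satisfies the Frobenius theorem by Theorem \ref{T:T1.2}; only then does pointwise tangency imply containment in a leaf. Apart from this, the two ``delicate points'' you flag are exactly the ones the paper addresses, and your use of the injectivity half of Theorem \ref{BIJECTION} to descend from $\C^B$ back to $\C^C$ is equivalent to the paper's opening reduction.
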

\begin{proof}
By Theorem \ref{BIJECTION} we can replace $C$ by $B$.

Let us first prove sufficiency.  Let $C \subset A \subset B$ and $S$ be a complex space. We say that a holomorphic map $\psi: S \to \C^A$ is \emph{trivial} if for each $o \in S$, $\psi^\ast \F_A$ is a trivial deformation of $\mathcal{D}ef^{\,es,\mu}_{f} (S,o)$. Assume $\psi:(\C,0) \to \C^B$ is the germ of an integral curve of a vector field $\delta$ in $\EL_B$. Set $q=\psi(0)$. Let $\psi_\ep: T_\ep \to \C^B$ be the morphism induced by $\psi$. There are $a_0, a_1, \ldots,a_l,\alpha_0,\beta_0 \in \C\{s_B\}[[x,y]]$ such that
\[
\delta F_B=a_0F_B+\sum_{j=1}^\ell a_j H_B^j+\alpha_0 \partial_x F_B+\beta_0 \partial_y F_B.
\]
Set $u=1+\ep a_0(q)$, $\alpha=\alpha(q) + \sum_{j=1}^\ell a_j(q)p^j$ and $\beta=\beta(q) + \sum_{j=1}^\ell \frac{j}{j+1} a_j(q)p^{j+1}$.
By Theorem \ref{T:RCKT3} there is $\gamma \in \C\{x,y,p\}$ such that
\[
(x,y,p,\ep) \mapsto (x+\alpha\ep,y+\beta\ep,p+\gamma\ep,\ep)
\]
defines a relative contact transformation $\chi^\ep$ over $T_\ep$. Let $G \in \C\{x,y,p,\ep\}$ be defined by $G(x,y,p,\ep)=F_B(x+\alpha\ep,y+\beta\ep,q)$. Since $\psi^\ast \F_B\equiv uG \; mod\; (\ep)$ and 
\[
\partial_\ep \psi^\ast \F_B\equiv \partial_\ep uG \;mod \; I_{\mathcal{C}on(\F_q)}+(\ep),
\]
we have that
\[
\psi^\ast \F_B\equiv uG \; mod\; I_{\chi^\ep(\mathcal{C}on(\F_q))}+(\ep^2).
\]
Therefore, $\psi_\ep^\ast F_B$ is a trivial deformation of $\mathcal{D}ef^{\,es,\mu}_{f} (T_\ep)$. Then $\Psi^\ast F_B$ is a trivial deformation of $\mathcal{D}ef^{\,es,\mu}_{f} (\C,0)$ (see the proof of Theorem \ref{LAST-3}).

Conversely, assume that there is a germ of contact transformation $\chi_1$ such that $(F_a^{\chi_1})=(F_b)$. We can assume $deg\; \chi_1 > 0$. If $\chi_1$ is of type (\ref{SEMIS-3}), by Lemma \ref{IMANIFOLFPREV} there is a trivial curve $\psi:\C \to \C^B$ such that $\psi(0)=a$ and $\psi(1)=b$. Moreover, $\psi$ is an integral curve of the Euler vector field. Since the derivative of $\chi_1$ leaves $\{y=p=0\}$ invariant, we can assume by Theorem \ref{ALLCONTACT-3} that $\chi_1$ is of type (\ref{abc3}). Set $\chi=\chi_{t\alpha,t\beta_0}$. There is a curve with polynomial coefficients $\psi:\C \to \C^B$ such that $F_a^\chi=\psi^\ast F_B$, $\psi(0)=a$ and $\psi(1)=b$.

Let $\Omega$ be an open set of $\C$. Let $\psi:\Omega \to \C^B$ be a trivial curve. Let us show that $\psi$ is contained in an integral manifold of $\EL_B$. Let $U$ be the union of the strata $U_\alpha$ such that, for each $c \in U$ the microlocal Tjurina number of $F_c$ equals the microlocal Tjurina number of $F_a$. Remark that the trajectory of $\psi$ is contained in $U$. By Theorem \ref{T:T1.2} $\EL_{B|U}$ verifies the Frobenius Theorem. Hence, it is enough to show that, for each $t_0 \in \Omega$, there is $\delta \in \EL_B$ such that $\psi'(t_0)=\delta(\psi(t_0))$. We can assume $t_0=0$. Since $\psi$ is trivial, there are a relative contact transformation $\chi$ and $u \in \C\{x,y,t\}$ such that $u(x,y,0)=1$ and
\[
F(x,y,\psi(t))\equiv uF^\chi(x,y,q) \; mod \; I_{\chi\left(\mathcal{C}on(\F_{q})\right)}.
\]
If $\chi$ is of type \ref{SEMIS-3}, we can assume $\delta$ is the Euler field.
Hence we can assume that $\chi$ is of type (\ref{abc3}). Therefore there are $\ell \geq 1$ and $a,b,a_i \in \C\{x,y\}$, $1\leq i \leq \ell$, such that
\[
 F(x,y,\psi(t))=uF(x,y,q)+\sum_{\ell=1}^{k-2} a_\ell tH_{q}^\ell +at\partial_x F_q +bt\partial_y F_q    \;mod \:(t^2).
\]
Deriving in order to $t$ and evaluating at $0$, there is $a_0 \in \C\{x,y\}$ such that
\[
\textstyle \sum_{(i,j) \in C_0} \psi'_{i,j}(0)x^iy^j=a_0F_q+\sum_{\ell=1}^{k-2} a_\ell H_{q}^\ell +a\partial_x F_q +b\partial_y F_q.
\]
There are $\delta \in L_B$ and $\ep \in \Delta_{F_B}$ such that
\[
\textstyle \delta F_B=a_0F_B + \sum_{\ell=1}^{k-2} a_\ell H_{B}^\ell + \ep.
\]
Hence 
\[
\textstyle \sum_{(i,j) \in B_0} \psi'_{i,j}(0)x^iy^j -  \delta(q) F_B=\ep(q)+a\partial_x F_q +b\partial_y F_q.
\]
If $\delta=\sum_{(i,j) \in B_0} a_{i,j}\partial_{s_{i,j}}$, $a_{i,j}(\psi(0))=\psi'_{i,j}(0)$ for each $(i,j) \in B_0$.
\end{proof}

\begin{theorem}\label{T:T2.1}
$(1)$ Let $\underline{e}=(e_1, \ldots,e_\rho) \in \mathbb{N}^{\rho+1}$ and let $U^a_{\underline{e}}$ denote the unique stratum (assumed to be not empty) such that $\underline{e}^a(t)=\underline{e}$ for each $t \in U^a_{\underline{e}}$. The geometric quotient $U^a_{\underline{e}}/\mathcal{L}$ is quasiaffine and of finite type over $\C$. It is a coarse moduli space for the functor which associates to any complex space $S$ the set of isomorphism classes of flat families (with section) over $S$ of plane curve singularities with fixed semigroup  $\langle k,n \rangle$ and fixed microlocal Hilbert function $\widehat{\tau}_a^\bullet$.\\
$(2)$ Let $T_{\widehat{\tau}_{min}}$ be the open dense set defined by singularities with minimal microlocal Tjurina number $\widehat{\tau}_{min}$. Then the geometric quotient $T_{\widehat{\tau}_{min}}/\mathcal{L}_C$ exists and is a coarse moduli space for curves with semigroup $\langle k,n \rangle$ and microlocal Tjurina number $\widehat{\tau}_{min}$. Moreover, $T_{\widehat{\tau}_{min}}/\mathcal{L}_C$ is locally isomorphic to an open subset of a weighted projective space.
\end{theorem}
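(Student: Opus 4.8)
The plan is to assemble the geometric-quotient machinery of Section~\ref{GQ} (from \cite{GP1}) with the classification of Theorem~\ref{IMANIFOLD} and the stratum bookkeeping of Lemma~\ref{Final1}. Throughout I work on $\C^C$, which by Theorem~\ref{BIJECTION} is harmless, write $L=[L_C,L_C]$ for the unipotent part and $\mathcal G=\exp L$, and use that $L_C/L\cong\C\delta_0$ is generated by the Euler field. For Part~(1) the first move is to recognize $U^a_{\underline e}$ as a single flattening stratum: by Lemma~\ref{Final1} the invariant $\underline e^a(t)$ is constant on each $U^a_\alpha$, takes distinct values on distinct strata, and satisfies $u^a_j(t)=\widehat\mu_1^{n+j}-\widehat\tau_{a,1}^{n+j}(t)$ and $v^a_j(t)=\widehat\mu_2^{n+j}-\widehat\tau_{a,2}^{n+j}(t)$, so that prescribing $\underline e$ amounts to prescribing the microlocal Hilbert function $\widehat\tau_a^\bullet$. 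Theorem~\ref{T:T1.2} then yields directly that $U^a_{\underline e}$ is $L$-invariant and that $U^a_{\underline e}\to U^a_{\underline e}/\mathcal L$ is a locally trivial geometric quotient; Proposition~\ref{P:P1.1} shows every point of the stratum is stable, whence the quotient is quasiaffine, and it is of finite type because $\C[s_C]$ is finitely generated.

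The substantive part of Part~(1) is the coarse moduli property. Given a flat family with section over a complex space $S$ whose fibers have semigroup $\langle k,n\rangle$ and microlocal Hilbert function $\widehat\tau_a^\bullet$, I would invoke the semiuniversality of $\F_C$ (Theorem~\ref{LAST-3}) to produce a classifying morphism $\kappa\colon S\to\C^C$ with $\kappa^\ast\F_C$ isomorphic to the family; the Hilbert-function hypothesis forces $\kappa(S)\subseteq U^a_{\underline e}$, and composition with the quotient map gives $S\to U^a_{\underline e}/\mathcal L$. That this is well defined and corepresents the functor rests on Theorem~\ref{IMANIFOLD}: two fibers have isomorphic conormals exactly when their parameters lie in a common integral manifold of $\EL_C$, and the splitting of a contact transformation into a scaling of type~(\ref{SEMIS-3}) and a unipotent part of type~(\ref{abc3}) (as in the proof of Theorem~\ref{IMANIFOLD}) shows that, once the section is fixed, the admissible equivalences are precisely the $L$-orbits. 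Since a geometric quotient is a categorical quotient, $U^a_{\underline e}/\mathcal L$ corepresents the functor and is therefore a coarse moduli space.

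For Part~(2) the minimal Tjurina locus $T_{\widehat{\tau}_{\min}}$ is open and dense: by Lemma~\ref{Final1} one has $\widehat\tau(X_t)=\widehat\mu-u^a_\rho(t)$, and upper semicontinuity of the Tjurina number makes $\{t:\widehat\tau(X_t)=\widehat{\tau}_{\min}\}$ the generic stratum. On this locus Part~(1) supplies the unipotent geometric quotient, and it remains to quotient further by $L_C/L\cong\C^\ast$, i.e. by the weighted $\C^\ast$-action generated by $\delta_0$. Here I would check that the only fixed point of this action is the origin $f=y^k-x^n$, which carries the maximal (hence non-minimal) Tjurina number and is therefore excluded from $T_{\widehat{\tau}_{\min}}$; consequently the action is fixed-point free on $T_{\widehat{\tau}_{\min}}$ and $T_{\widehat{\tau}_{\min}}/\mathcal L_C=(T_{\widehat{\tau}_{\min}}/\mathcal L)/\C^\ast$ is a genuine geometric quotient, whose points Theorem~\ref{IMANIFOLD} identifies with isomorphism classes of curves of Tjurina number $\widehat{\tau}_{\min}$. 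Since the coordinates $s_\alpha$ carry positive weights, the $\C^\ast$-quotient of an open cone is an open subset of a weighted projective space, and the local triviality $\pi^{-1}(V_i)\cong V_i\times\C^{n_i}$ of Theorem~\ref{T:T1.2} transports this local model through the unipotent quotient, yielding the asserted local isomorphism.

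I expect the main obstacle to be twofold. In Part~(1) the delicate point is matching the functor's notion of isomorphism of families \emph{with section} with the $L$-orbits rather than the full $\EL_C$-orbits, which requires tracking the section through the scaling/unipotent factorization of Theorem~\ref{ALLCONTACT-3} and confirming that the scaling factor is exactly what the section marking rigidifies. In Part~(2) the subtlety is ensuring that the residual $\C^\ast$-action on the minimal-Tjurina stratum has no positive-dimensional isotropy away from the cone point, so that the quotient is honestly geometric and locally weighted-projective (finite isotropy being harmless, as it only produces the usual quotient singularities of the weighted projective space) rather than merely a quotient stack.
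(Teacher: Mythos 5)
Your proposal is correct and follows essentially the same route as the paper, whose entire proof of this theorem is the single sentence ``It follows from Lemma \ref{Final1} and Theorems \ref{T:T1.2} and \ref{IMANIFOLD}'' — precisely the three ingredients (plus Proposition \ref{P:P1.1} for quasiaffineness) that you assemble. Your write-up simply supplies the details the paper leaves implicit, including the identification of the strata with level sets of the microlocal Hilbert function, the splitting of the action into the unipotent part and the residual Euler $\C^*$-action, and the fixed-point-freeness on the minimal Tjurina stratum.
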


\begin{proof}
It follows from Lemma \ref{Final1} and Theorems \ref{T:T1.2} and \ref{IMANIFOLD}. 
\end{proof}

\section{Example}

The function 
\begin{equation*}
F_C=y^6+x^{13}+s_2x^{9}y^2+s_3x^7y^3+s_4x^5y^4+s_9x^8y^3+s_{10}x^{6}y^4+s_{16}x^7y^4.
\end{equation*}
is a semiuniversal equisingular microlocal deformation of $f=y^6+x^{13}$.

The Lie algebra $L_C$ is generated by the vector fields
\begin{align*}
\delta^0 &= 2s_2\partial_{s_2}+3s_3\partial_{s_3}+4s_4\partial_{s_4}+9s_9\partial_{s_9}+10s_{10}\partial_{s_{10}}+16s_{16}\partial_{s_{16}}\\
\delta^6 &=3s_3\partial_{s_9}+(4s_4-\frac{58}{39}s_2^2)\partial_{s_{10}}+10s_{10}\partial_{s_{16}}\\
\delta^7 &=2s_2\partial_{s_9}+3s_3\partial_{s_{10}}\\
\delta^{12} &=4s_{4}\partial_{s_{16}}\\
\delta^{13} &=3s_{3}\partial_{s_{16}}\\
\delta^{14} &=2s_{2}\partial_{s_{16}}.
\end{align*}
Choosing $a=6$ we get $F^0_a=\langle s_2,s_3,s_4\rangle$, $F^1_a=\langle s_2,s_3,s_4,s_9,s_{10}\rangle$, $F^2_a=\langle s_2,s_3,s_4,s_9,s_{10},s_{16}\rangle$. So, $\rho=2$ and the stratification $\{U^a_\alpha\}$ given by fixing $\underline{e}^a (t)=(u^a_0(t), u^a_1(t), u^a_2(t);v^a_0(t), v^a_1(t), v^a_2(t))$ is given by
\begin{align*}
U_1=&\{t=(t_2,t_3,t_4,t_9,t_{10},t_{16}) \in Spec\, \C [{ s_C}] : \underline{e}^a (t)=(1,3,4;1,3,4)\}\\
=&\{t  : 9t_3^2-8t_2t_4+\frac{116}{39}t_2^3\neq 0\}.\\
U_2=&\{t \in Spec\, \C [{s_C}] : \underline{e}^a (t)=(1,2,3;1,2,3)\}\\
=&\{t : 9t_3^2-8t_2t_4+\frac{116}{39}t_2^3= 0 \;\text{and}\; t_2 \neq 0 \;\text{or}\; t_3 \neq 0 \;\text{or}\; t_4 \neq 0\}.\\
U_3=&\{t \in Spec\, \C [{s_C}] : \underline{e}^a (t)=(0,1,2;0,1,2)\}\\
=&\{t : t_2=t_3=t_4=0  \;\text{and}\; t_{10} \neq 0\}.\\
U_4=&\{t \in Spec\, \C [{s_C}] : \underline{e}^a (t)=(0,1,1;0,0,1)\}\\
=&\{t : t_2=t_3=t_4=t_{10}=0  \;\text{and}\; t_{9} \neq 0\}.\\
U_5=&\{t \in Spec\, \C [{s_C}] : \underline{e}^a (t)=(0,0,1;0,0,1)\}\\
=&\{t : t_2=\cdots=t_{10}=0  \;\text{and}\; t_{16} \neq 0\}.\\
U_6=&\{t \in Spec\, \C [{s_C}] : \underline{e}^a (t)=(0,0,0;0,0,0)\}\\
=&\{t : t_2=\cdots=t_{16}=0\}.
\end{align*}

$U_1$ is the stratum with minimal microlocal Tjurina number.


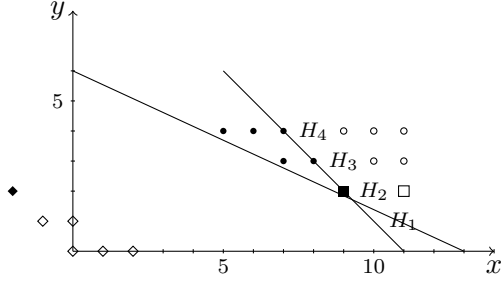
\begin{figure}
\captionsetup{singlelinecheck=off}
\begin{tikzpicture}[scale=0.4,inner sep =0pt]
\draw[->] (0,0) -- (14,0);
\draw[->] (0,0) -- (0,8);
\draw (14,-0.5) node {$x$};
\draw (-0.5,8) node {$y$};
\draw (11,0) -- (5,6);
\draw (0,6) -- (13,0);
\foreach \x in {3,...,14} \draw (\x cm,2pt) -- (\x cm,-2pt) node[anchor=north]{};
\draw (10,-0.5) node {\scriptsize $10$};
\draw (5,-0.5) node {\scriptsize $5$};
\foreach \y in {2,...,6} \draw (2pt,\y cm) -- (-2pt,\y cm) node[anchor=north] {};
\draw (-0.5,5) node {\scriptsize $5$};

\foreach \i in {5,...,7}
{
\path (\i,4) coordinate (A\i);
\fill (A\i) circle (3pt);
}
\foreach \i in {7,...,8}
{
\path (\i,3) coordinate (B\i);
\fill (B\i) circle (3pt);
}

\foreach \i in {9,...,9}
{
\node[minimum size=4pt] (C\i) at (\i,2) [fill] {};
}

\draw (8,4) node {\scriptsize $H_4$};
\draw (9,3) node {\scriptsize $H_3$};
\draw (10,2) node {\scriptsize $H_2$};
\draw (11,1) node {\scriptsize $H_1$};

\foreach \l in {9,...,11}
{
\path (\l,4) coordinate (A\l);
\draw (A\l) circle (3pt);
}
\foreach \l in {10,...,11}
{
\path (\l,3) coordinate (B\l);
\draw (B\l) circle (3pt);
}

\foreach \i in {11,...,11}
{
\node[minimum size=4pt] (C\i) at (\i,2) [draw]  {};
}

\foreach \i in {0,...,2}
{
\node[diamond,minimum size=4pt] (E\i) at (\i,0) [draw] {};
}
\foreach \i in {-1,...,0}
{
\node[diamond,minimum size=4pt] (D\i) at (\i,1) [draw] {};
}

\foreach \i in {-2,...,-2}
{
\node[diamond,minimum size=4pt] (C\i) at (\i,2) [fill] {};
}
\end{tikzpicture}
\caption{This figure concerns the case $k=6$ and $n=13$. The diamonds represent the set of orders of the vector fields generating ${L_C}$. The black circles and black squares represent $C_0$. The leading monomials of $H_1,\ldots,H_4$ are represented as well. The white square represents the leading monomial of $xH_2$ and $yH_1$ which produce the vector field $\delta^{14}$ with order represented by a black diamond. The order of the vector fields $\delta^0_{0}$, $\delta^0_{1}$, $\delta^1_{1}$, $\delta^0_{2}$ and $\delta^0_{3}$ are represented by white diamonds.
} \label{fig:M2}
\end{figure}

Let us present detailed calculations concerning the generators of $L_C$ in the previous example.
Let $Y$ denote the germ at the origin of $\{F_C=0\}$. The relative conormal $\EL$ of $Y$ can be parametrized by
\begin{align*}
x&=-t^6,\\
y&=t^{13}+\psi_2t^{15}+\psi_3t^{16}+\psi_4t^{17}+\psi_5t^{18}+\psi_6t^{19}+\psi_7t^{20}+\psi_8t^{21}+\psi_9t^{22}+\cdots,\\
p&=-\frac{13}{6}t^7-\frac{5}{2}\psi_2t^{9}-\frac{8}{3}\psi_3t^{10}-\frac{17}{6}\psi_4t^{11}-3\psi_5t^{12}-\frac{19}{6}\psi_6t^{13}-\frac{10}{3}\psi_7t^{14}-\frac{7}{2}\psi_8t^{15}\\
&-\frac{11}{3}\psi_9t^{16}+\cdots,
\end{align*}
where $\psi_i \in ({s_C})\C[{s_C}]$ are homogeneous of degree $-i$. These are the $a_i$ such that the polynomial in $\C[t]$ given by the following SINGULAR session is zero:\\ 
\phantom{}\\
{\tt
> ring r=(0,a2,a3,a4,a5,a6,a7,a8,a9,s2,s3,s4,s9,s10,s16),(x,y,t),dp;\\
> poly F=y6+x13+s2*x9y2+s3*x7y3+s4*x5y4+s9*x8y3+s10*x6y4+s16*x7y4;\\
> subst(F,x,-t6);\\
-t}\^{\tt 78} {\tt+(-s2)*y}\^{\tt 2}{\tt *t}\^{\tt 54}{\tt+(s9)*y}\^{\tt 3*t}\^{\tt 48}{\tt +(-s16)*y}\^{\tt 4*t}\^{\tt 42}{\tt +(-s3)*y}\^{\tt 3*t}\^{\tt 42}\\
{\tt +(s10)*y}\^{\tt 4*t}\^{\tt 36+(-s4)*y}\^{\tt 4*t}\^{\tt 30+y}\^{\tt 6}\\
{\tt
> subst(-t}\^{\tt 78} {\tt+(-s2)*y}\^{\tt 2}{\tt *t}\^{\tt 54}{\tt+(s9)*y}\^{\tt 3*t}\^{\tt 48}{\tt +(-s16)*y}\^{\tt 4*t}\^{\tt 42}{\tt +(-s3)*y}\^{\tt 3*t}\^{\tt 42}\\
{\tt +(s10)*y}\^{\tt 4*t}\^{\tt 36+(-s4)*y}\^{\tt 4*t}\^{\tt 30+y}\^{\tt 6,y,t}\^{\tt 13+a2*t}\^{\tt 15+a3*t}\^{\tt 16+a4*t}\^{\tt 17+a5*t}\^{\tt 18}\\
{\tt +a6*t}\^{\tt 19+a7*t}\^{\tt 20+a8*t}\^{\tt 21+a9*t}\^{\tt 22)\\
}\\
\phantom{}\\
As we'll see, the only $\psi_i$ we actually need to find the generators of ${L_C}$ is 
\[
\psi_2=s_2/6.
\]
Let us calculate the vector fields generating ${L_C}$. Here, all equalities are $mod\, \Delta F_C$ and in the vector fields we identify, by abuse of language, the monomials and the corresponding $\partial$'s :
\begin{itemize}
\item $\delta^{14}$:\\
\[
xH_2=xp^2\partial_x F_C=13p^2x^{13}+9s_2p^2x^9y^2+\cdots
\]
Notice that, as a consequence of Lemma \ref{FIRST},  the monomials occurring with order bigger than  $deg(x^7y^4)$ can be ignored in this calculation. From now on, whenever we use the symbol $\cdots$ we mean that bigger order monomials can be ignored.
Now, continuing the previous SINGULAR session:\\
\phantom{}\\
{\tt
> poly p=(-13t7-15*a2*t9-16*a3*t10-17*a4*t11-18*a5*t12-19*a6*t13-20*a7*t14\\
-21*a8*t15-22*a9*t16)/6;\\
> poly X=-t6;\\
> poly Y=t13+a2*t15+a3*t16+a4*t17+a5*t18+a6*t19+a7*t20+a8*t21+a9*t22;\\
> p}\^{\tt 2*X}\^{\tt13-(13/6)}\^{\tt 2*X}\^{\tt 11*Y}\^{\tt 2;\\
(-35*a9}\^{\tt 2)/4*t}\^{\tt 110+(-293*a8*a9)/18*t}\^{\tt 109+(-271*a7*a9-136*a8}\^{\tt 2)/18*t}\^{\tt 108+(-249*a6*a9-251*a7*a8)/18*t}\^{\tt 107+(-454*a5*a9-460*a6*a8-231*a7}\^{\tt 2)/36*t}\^{\tt 106+(-205*a4*a9-209*a5*a8-211*a6*a7)/18*t}\^{\tt 105+(-183*a3*a9-188*a4*a8-191*a5*a7-96*a6}\^{\tt 2)/18*t}\^{\tt 104+(-161*a2*a9-167*a3*a8-171*a4*a7-173*a5*a6)/18*t}\^{\tt 103+(-292*a2*a8-302*a3*a7-308*a4*a6-155*a5}\^{\tt 2)/36*t}\^{\tt 102+(-131*a2*a7-135*a3*a6-137*a4*a5-117*a9)/18*t}\^{\tt 101+(-116*a2*a6-119*a3*a5-60*a4}\^{\tt 2-104*a8)/18*t}\^{\tt 100 +(-101*a2*a5-103*a3*a4-91*a7)/18*t}\^{\tt 99+(-172*a2*a4-87*a3}\^{\tt 2-156*a6)/36*t}\^{\tt 98\\+(-71*a2*a3-65*a5)/18*t}\^{\tt 97+(-14*a2}\^{\tt 2-26*a4)/9*t}\^{\tt 96+(-13*a3)/6*t}\^{\tt 95+(-13*a2)/9*t}\^{\tt 94
}\\
\phantom{}\\
we see that
\[
p^2x^{13}=\left(\frac{13}{6}\right)^2x^{11}y^{2}+\frac{13\psi_2}{9}x^7y^4+\cdots
\]
Now, $\delta^1_3$, given by $yH_1$, which has the same order as $xH_2$ can be used to, through elementary operations, eliminate from  $\delta^2_1$ the monomial $x^{11}y^{2}$. Thus, 
\[
\delta^{14}=s_2x^7y^4.
\]
\item  $\delta^{13}$:\\
\[
-6.13yF_C=2s_2x^9y^3+3s_3x^7y^4+\cdots
\]
But, as
\[
H_3=\left(\frac{13}{6}\right)^3.13x^9y^3+\cdots
\]
we see that, through elementary operations involving $\delta^3_0$, we can eliminate from  $\delta^0_3$ the monomial $x^{9}y^{3}$.
Thus, 
\[
\delta^{13}=3s_3x^7y^4.
\]
\item $\delta^{12}$:\\
\[
-6.13x^2F_C=2s_2x^{11}y^2+3s_3x^9y^3+4s_4x^7y^4+\cdots
\]
through elementary operations involving $\delta^1_3$ and $\delta^3_0$ we can eliminate the monomials $x^{11}y^2$ and $x^9y^3$ from $\delta^0_2$ and get:
\[
\delta^0_2=(4s_4+\ast s_2^2)x^7y^4, \qquad \ast \in \C.
\]
Finally, using $\delta^{14}$ to eliminate $\ast s_2^2x^7y^4$, we have that
\[
\delta^{12}=4s_4x^7y^4.
\]
\item $\delta^{7}$:\\
\[
xH_1=xp\partial_x F_C=13px^{13}+9s_2px^{9}y^2+7s_3px^7y^3+5s_4px^5y^4+8s_9px^8y^3+\cdots
\]
and
\[
px^{13}=\frac{13}{6}x^{12}y+\frac{s_2}{18}x^8y^3+\frac{s_3}{12}x^6y^4+\cdots
\]
\begin{remark}\label{ignore}
The reason why we can ignore in $px^{13}$ the monomials that occur after $x^6y^4$ is that
\begin{enumerate}
\item All monomials after $x^6y^4$, except for $x^7y^4$,  can be eliminated because of Lemma \ref{FIRST} and through elementary operations involving $\delta^1_3$ and $\delta^3_0$.
\item Even $x^7y^4$ can be ignored, observing that $px^{13}$ is homogeneous of degree $7$ and as such, the only variables involved in the coefficient (in $\C[s_C]$) of $x^7y^4$ may be $s_2$, $s_{3}$ or $s_4$. Now, using $\delta^{14}$, $\delta^{13}$ and $\delta^{12}$ we can eliminate, through elementary operations, the monomial $x^7y^4$ from $\delta^{7}$.
\end{enumerate}
\end{remark}
From
\[
y\partial_x F_C=13x^{12}y+9s_2x^8y^3+7s_3x^6y^4+5s_4x^4y^5+8s_9x^7y^4+\cdots
\]
we get that
\[
\frac{13}{6}x^{12}y=-\frac{3}{2}s_2x^8y^3-\frac{7}{6}s_3x^6y^4-\frac{5}{6}s_4x^4y^5-\frac{8}{6}s_9x^7y^4+\cdots
\]
Reasoning as in remark \ref{ignore} we see that $s_4x^4y^5$ can be ignored. Thus,
\[
13px^{13}=13\left(\left(-\frac{3}{2}s_2+\frac{s_2}{18}\right)x^8y^3+\left(-\frac{7}{6}s_3+\frac{s_3}{12}\right)x^6y^4-\frac{8}{6}s_9x^7y^4+..\right)
\]
Now,
\[
px^9y^2=\frac{13}{6}x^8y^3+\cdots
\]
\[
px^7y^3=\frac{13}{6}x^6y^4+...
\]
\[
px^8y^3=\frac{13}{6}x^7y^4+...
\]
Once again, the monomials ignored can be eliminated, reasoning as in Remark \ref{ignore}. So,
\begin{align*}
xH_1=&\left(13\left(-\frac{3}{2}s_2+\frac{s_2}{18}\right)+9\frac{13}{6}s_2\right)x^8y^3+\left(13\left(-\frac{7}{6}s_3+\frac{s_3}{12}\right)+7\frac{13}{6}s_3\right)x^6y^4+\\
&+\left(-13\frac{8}{6}s_9+8\frac{13}{6}s_9\right)x^7y^4\\
=&\frac{13}{18}s_2x^8y^3+\frac{13}{12}s_3x^6y^4.
\end{align*}
We get that
\[
\delta^{7}=\frac{s_2}{3}x^8y^3+\frac{s_3}{2}x^6y^4.
\]
\item $\delta^{6}$:\\
\[
-6.13xF_C=2s_2x^{10}y^2+3s_3x^8y^3+4s_4x^6y^4+9s_9x^9y^3+10s_{10}x^7y^4+\cdots
\]
Because (monomials ignored as in Remark \ref{ignore})
\[
H_2=p^2\partial_x F_C=13p^2x^{12}+9s_2p^2x^8y^2+\cdots,
\]
\[
p^2x^{12}=\left(\frac{13}{6}\right)^2x^{10}y^2+\frac{13}{6.9}s_2x^6y^4+\cdots,
\]
and
\[
p^2x^8y^2=\left(\frac{13}{6}\right)^2x^{6}y^4+\cdots
\]
wet get
\begin{align*}
&-6.13xF_C-2s_2\left(\frac{6}{13}\right)^2\frac{H_2}{13}=\\
=&3s_3x^8y^3+\left(4s_4-2s_2\left(\frac{6}{13}\right)^2\frac{13}{6.9}s_2-2s_2\left(\frac{6}{13}\right)^2\frac{9}{13}\left(\frac{13}{6}\right)^2s_2\right)x^6y^4+10s_{10}x^7y^4=\\
=&3s_3x^8y^3+\left(4s_4-\frac{58}{39}s_2^2\right)x^6y^4+10s_{10}x^7y^4.
\end{align*}
So,
\[
 \delta^{6}=3s_3x^8y^3+\left(4s_4-\frac{58}{39}s_2^2\right)x^6y^4+10s_{10}x^7y^4.
\]
\end{itemize}

\end{document}